\documentclass[a4paper,11pt,reqno]{amsart}
\usepackage{amsfonts,amssymb,amsmath}
\usepackage[T1]{fontenc}
\usepackage[utf8]{inputenc}
\usepackage{dsfont}
\usepackage{amsmath,amssymb,latexsym,mathrsfs,amsthm}
\usepackage[usenames,dvipsnames,svgnames,table]{xcolor}
\usepackage{indentfirst}
\usepackage{amsmath}
\usepackage{amssymb}
\usepackage{hyperref}
\usepackage{enumitem}
\usepackage{comment}
\usepackage[hmargin=3.0cm,vmargin=3.0cm]{geometry} %% looks similiar to PAMS
\usepackage{latexsym,mathrsfs,pb-diagram}
\usepackage{amssymb,amsmath,amsfonts,amsthm,mathtools}
\usepackage[usenames,dvipsnames,svgnames,table]{xcolor}
\usepackage[pdftex]{graphicx}

\numberwithin{equation}{section}

\theoremstyle{definition} 

\newtheorem{Def}{Definition}[section]

\theoremstyle{definition}

\newtheorem{Obs}[Def]{Remark}

\theoremstyle{plain}
\newtheorem{Pro}[Def]{Proposition}
\newtheorem{Cor}[Def]{Corollary}
\newtheorem{Teo}[Def]{Theorem}
\newtheorem{Lem}[Def]{Lemma}

\newcommand{\T}{\mathbb{T}}
\newcommand{\R}{\mathbb{R}}

\newcommand{\N}{\mathbb{N}}

\newcommand{\C}{\mathbb{C}}

\newcommand{\D}{\mathscr{D}}

\newcommand{\Z}{\mathbb{Z}}

\newcommand{\del}{\partial}

\renewcommand{\Re}{\mathrm{Re }}
\renewcommand{\Im}{\mathrm{Im }}

\newcommand{\ds}{\displaystyle}

\numberwithin{equation}{section}

\setlength{\textheight}{22.1truecm}
\setlength{\textwidth}{17truecm}
\marginparwidth  0truecm
\oddsidemargin   -0.3truecm
\evensidemargin  -0.3truecm
\marginparsep    0truecm

\topmargin -0.1cm
%
%%%%%%%%%%%%%%%%%%%%%%
\usepackage{color}

\definecolor{Red}{rgb}{1.00, 0.00, 0.00}
\definecolor{DarkGreen}{rgb}{0.00, 1.00, 0.00}
\definecolor{Blue}{rgb}{0.00, 0.00, 1.00}
\definecolor{Cyan}{rgb}{0.00, 1.00, 1.00}
\definecolor{Magenta}{rgb}{1.00, 0.00, 1.00}
\definecolor{DeepSkyBlue}{rgb}{0.00, 0.75, 1.00}
\definecolor{DarkGreen}{rgb}{0.00, 0.39, 0.00}
\definecolor{SpringGreen}{rgb}{0.00, 1.00, 0.50}
\definecolor{DarkOrange}{rgb}{1.00, 0.55, 0.00}
\definecolor{OrangeRed}{rgb}{1.00, 0.27, 0.00}
\definecolor{Black}{rgb}{0.00, 0.00, 0.00}
\definecolor{dark-magenta}{rgb}{.5,0,.5}
\definecolor{myblack}{rgb}{0,0,0}
\definecolor{DeepPink}{rgb}{1.00, 0.08, 0.57}
\definecolor{DarkViolet}{rgb}{0.58, 0.00, 0.82}
\definecolor{SaddleBrown}{rgb}{0.54, 0.27, 0.07}
\definecolor{darkgray}{gray}{0.5}
\definecolor{lightgray}{gray}{0.75}
%%%%%%%%%%%%%%%%%%%%%%

\allowdisplaybreaks

\author{Alexandre Arias Junior}
\address{\small Departamento de Computa\c c\~ao e Matem\'atica, Universidade de S\~{a}o Paulo, Ribeirão Preto, SP, 14040-900, Brazil}
\email{alexandre.ariasjunior@usp.br}
\thanks{The first author was partially supported by CAPES, grant 88882.381844/2019-01 and partially supported by FAPESP, grant 2022/01712-3}

\author{Bruno de Lessa Victor}
\address{\small Departamento de Matem\'atica, Universidade Federal de Santa Catarina, Florianópolis, SC, 88040-900, Brazil}
\email[Corresponding Author]{bruno.lessa@ufsc.br}
\thanks{The second author was supported by FAPESP, grant 2021/03199-9.}

\keywords{Well-posedness, evolution equations, degenerate equations, periodic setting, Fourier analysis.}
\subjclass[2020]{35G10 (Primary), 35B10, 35K65 (Secondary).}

\title[]{The Cauchy problem for a class of linear degenerate evolution equations on the torus}

\begin{document}

\begin{abstract}
	We study, in the periodic setting, the well-posedness of the Cauchy problem associated to the operator $P(t, D_{x}, D_{t}) = D_{t} - a_{2}(t) \Delta_{x} + \ds \sum_{j = 1}^{N} a_{1, j}(t) D_{x_{j}} + a_{0}(t)$, with $T> 0$, $t \in [0, T]$ and $a_{2}, a_{1,1}, \ldots, a_{1, N}, a_{0} \in C \left([0, T]; \C \right)$. Using Fourier analysis techniques, we obtain a complete characterization for the well-posedness of a class of degenerate initial-value problems in the Sobolev, Smooth, Gevrey and Real-Analytic frameworks. 
\end{abstract}

	\maketitle
	
	\section{Introduction}
Linear evolution equations have attracted the attention of a great number of mathematicians for several decades, with some particular problems being studied as far as two hundred years ago. This is for instance the case of the Heat Equation, investigated by Jean-Baptiste Fourier in his book \emph{Théorie analytique de la chaleur (1822)}, where the concept of \emph{Fourier series} was first introduced and the cornerstones of what became known later as \emph{Fourier Analysis} were established.

Proceeding to the 20th and 21st centuries, one can find a huge number  of works concerning evolution problems (\emph{see for instance \cite{ aac1,aac2,abz1,abz2,ac,cc, ma} and references therein}); they may be concerned with different functional settings such as Sobolev, $C^{\infty}$ and  Gevrey ,  but most of them deal with domains in the form $I \times \R^{N}$, where $I$ is a subinterval of $[0, + \infty)$. In the particular case of second order operators, even the most classic examples of Cauchy problems  do not admit well-posedness in the most classic settings, such as $C^{\infty}(\R^{N})$ or $G^{s}(\R^{N})$ (see \cite{m}). Thus authors usually look for such properties  in spaces like  $H^{\infty}(\R^{N})$ and $H_{s}^{\infty}(\R^{N})$, whose elements decay at infinity. 

Consider for example the following class of second order evolution operators:
\begin{equation}\label{eq_beluga}
	P(t,x,D_t,D_x) = D_t - a(t,x)\Delta_{x} + \sum_{j=1}^{N}a_j(t,x)D_{x_j} + a_0(t,x), \quad t \in [0,T], x \in \R^{N},
\end{equation}
where $D_{x_j} = -i\partial_{x_j}$, $\Delta_x = \sum_{j=1}^{N} \partial^{2}_{x_j}$ and the coefficients are assumed to be complex valued, continuous with respect to time and $\mathcal{B}^{\infty}(\R^{N})$-regular in the space $x$. Here $\mathcal{B}^{\infty}(\R^{N})$ stands for the space of all smooth functions which are not only bounded, but this also holds for each of their derivatives. 

If we split the leading coefficient into its real and imaginary part: $a(t,x) = b(t,x) + ic(t,x)$, the class given in \eqref{eq_beluga} englobes two special cases. When $c(t,x) \leq -\varepsilon$ for every $t$ and $x$, $P$ is a typical example of parabolic operator. In this situation  one obtains in general well-posedness in $L^{2}(\R^{N})$ without any loss of derivatives. This is the case for instance in \cite{kg} and \cite{t}, where a fundamental solution is constructed. On the other hand, when $c(t,x) \equiv 0$, \eqref{eq_beluga} is a Schr\"odinger type operator, a typical example of non-kowalewskian operator that is not parabolic. The well-posedness in this situation turns out to be \emph{way more intricate}, being usually dictated by the imaginary part of the first order terms; for instance \cite{i2, i3} and \cite{m} exhibit necessary and sufficient conditions for $L^{2}(\R^N)$ and $H^{\infty}(\R^N)$ well-posedness of the related problem. Moreover, in \cite{dre} the author presents a necessary condition in the \emph{Gevrey} framework. Frequently these conditions are associated to the \emph{decay} of the imaginary part of the first order coefficients, especially when one searches sufficient conditions for well-posedness in the \emph{Gevrey setting} (see for example \cite{cr, kb}.

Now observe that, if one replaces $\R^{N}$ by $\T^{N}$ in any of the problemas above, the concept of \emph{decay at infinity} does not fit anymore. And there is no guarantee that even by making the \emph{proper translations} between both environments (such as $H^{r}(\R^{N}) \leftrightarrow H^{r}(\T^{N})$ and  $H^{\infty}(\R^{N}) \leftrightarrow C^{\infty}(\T^{N})$) one would obtain analogous results. In fact, it is widely known the existence of  disparities between properties for operators that act both in the euclidean and periodic settings,  which makes our inquiries much more compelling.  We recall for instance the existence of constant coefficient operators which are globally hypoelliptic when acting on periodic distributions, but are not hypoelliptic in the local sense (see \cite{gw}).

An extensive search was made in the literature, but we were not able to spot any manuscript that deals with any problem of the nature just described. In the parabolic case, we have reasons to believe that one has similar results (Corollary \ref{Lincoln} is a first confirmation of our educated guess). Similarly, we think that the Schr\"odinger case can be completely solved. The authors are currently working with both problems.

Due to the fact that there were no references available, we started with a problem for which Fourier analysis tools could be applied (i.e the  coefficients of our operator do not depend on space variables).  Even in this case, it becomes clear that although some of our techniques could be reproduced for the euclidean case, several results rely strongly on the fact that we are working in the periodic setting, particularly in Sections \ref{The Wrestler}, \ref{Walk the Line} and \ref{The Dark Knight}.

Now we proceed to the content of our work; let  $\T^N = \R^N / 2 \pi \Z^N$ be the $N$-dimensional torus, for some $N \in \N$, and fix a positive real number $T$. We deal with  the  following  class of operators:
\begin{equation} \label{Her} 
	P(t, D_{x}, D_{t}) = D_{t} - Q(t, D_{x}) = D_{t} - a_{2}(t) \Delta_{x} + \ds \sum_{j = 1}^{N} a_{1, j}(t) D_{x_{j}} + a_{0}(t), \ \ \ \ t \in [0, T], \ x \in \T^N,
\end{equation}	
where $\Delta_{x}$ denotes the Laplace Operator, $D_{t} = \ds \frac{1}{i} \ds \frac{\partial}{\partial t}$, $ D_{x_{j}} = \ds \frac{1}{i} \ds  \frac{\partial}{\partial x_{j}}$ and $a_{0}(t), a_{1,1}(t), \ldots, a_{1, N}(t), a_{2}(t)$ are all elements of $C \left([0, T]; \C \right)$. The main purpose of the present work is to study well-posedness for  initial-value problems in the form 
\begin{equation} \label{Schindler's List}
	\left\{ \begin{array} {rl}
		Pu(t,x) &= f(t,x), \\
		u(0, x) & = g(x), 
	\end{array} \right. \ \ \forall t \in [0, T], \ \forall x \in \T^{N},
\end{equation}
in the framework of Sobolev ($H^{r}(\T^{N})$, $r \in \R$), smooth ($C^{\infty}(\T^{N}))$, Gevrey ($G^{s}(\T^{N})$, $s > 1$) and real-analytic ($C^{\omega}(\T^{N})$) spaces.  By well-posedness we mean the existence and uniqueness of a $u(t, x)$ in $C \left([0, T]; X \right)$ which solves the Cauchy problem, with $X$ representing any of the aforementioned spaces. 
  
As it should be expected, the most important factor to the well-posedness of \eqref{Schindler's List} is the sign of the imaginary part of $a_{2}$. Indeed, generally speaking, we prove the following statements: 
\begin{itemize} [leftmargin=*]
	\item If there exists  $t^{\star} \in [0, T]$ such that $\Im (a_{2}(t^{\star})) > 0$, then  \eqref{Schindler's List} is ill-posed in each of the aforementioned spaces (see Theorem \ref{American  Beauty}).
	\item When $\Im (a_{2}) < 0$, \eqref{Schindler's List} is well-posed for each framework (see Theorem \ref{Rear Window} and Corollary \ref{Lincoln}).
\end{itemize}

It is important to emphasize that both results \emph{were expected}; a  similar fact was proved in \cite{p}, in the euclidean environment. Nevertheless they were still included in this manuscript since they are \emph{straightforward consequences} of results we consider quite interesting, such as Theorem \ref{Cries and Whispers}  and Proposition  \ref{The Graduate}. 

Hence the only situation left uncovered is the one where $\Im (a_{2}) \leq 0$ and vanishes at some point. This is by far the most interesting case and now the behavior of the imaginary part of the first-coefficients also must be accounted.
First we deal with another case which should be more predictable, when $\Im (a_{2}) \equiv 0$: 
\begin{itemize} [leftmargin=*]
	\item \eqref{Schindler's List} is well-posed in any of the settings if  $\Im (a_{1, j}) \equiv 0$ for each $j \in \left\{1, \ldots, N \right\}$ --- a particular case of Theorem  \ref{First Man} and Corollary \ref{Bourne Identity}.
	\item \eqref{Schindler's List} is ill-posed in each framework if  $\Im (a_{1, j}) \not\equiv 0$ for some $j \in \left\{1, \ldots, N \right\}$ (see Theorem  \ref{La La Land}).
\end{itemize}

Finally, we move towards the most important and compelling part of the article (Section \eqref{The Dark Knight}): the \emph{degenerate case}.  Assuming that $\Im(a_{2})$ is never strictly positive, has a finite number of zeros $\left\{t_{1}, \ldots, t_{m} \right\}$ and vanishes to a finite order at each of them, we characterize well-posedness by comparing the order of vanishing of each $\Im (a_{1,j})$ at $t_{k}$, $k \in \left\{1, \ldots, m \right\}$.  Since our main statement  can be quite complicated to absorb at a first glance, we present an easier example. Let 
\begin{equation*}
P(t, D_{x}, D_{t}) = D_{t} + it^{k} \ds \frac{\del^{2}}{\del x^{2}} +  i t^{\ell} D_{x} + a_{0}(t), \ \ \ \ t \in [0, T], \ x \in \T, \ \ \ k, \ell \geq 0. 
\end{equation*}
As a consequence of Theorem \ref{Tropa de Elite II} and Corollary \ref{The Bourne Ultimatum} one obtains: 
\begin{itemize}[leftmargin=*]
	\item \emph{ If $k \leq 2 \ell + 1$, then \eqref{Schindler's List} is well-posed in each of the settings;} 
	\item \emph{ When $k > 2 \ell + 1$, the problem is ill-posed in both Sobolev and $C^{\infty}$ frameworks. Moreover, it will be well-posed in $G^{s}$ if and only if}
	\begin{equation*}
	s < \ds \frac{k - \ell}{k - 2 \ell - 1}.
	\end{equation*}
\end{itemize}

Since $C^{\omega}(\T^{N}) = G^{1}(\T^{N})$, it is worth noting  that in the example above  one always has well-posedness in $C^{\omega}(\T^{N})$. This in fact holds true for the much more general statement made in Theorem \ref{Tropa de Elite II}.  We prove in Section \ref{Ex Machina} that such phenomenon is directly related to \eqref{Her} being a second-order operator with respect to the space variables. Furthermore, it is important to mention that when any of the coefficients vanishes to an \emph{infinite order} at some point, it is \emph{not possible} to obtain such a characterization anymore; this is the content of Remark \ref{The Super Mario Bros. Movie}.

This manuscript is organized in the following manner: Section \ref{The Post} is devoted for the precise definitions of the spaces for which we are interested to investigate well-posedness and its properties regarding Fourier series. We also provide an explicit formula for the Fourier coefficients of the formal solution of the inital-value problem \eqref{Schindler's List}. The subsequent section contains results that allow us to conclude the following:  the influence of the imaginary parts of the coefficients of \eqref{Her} to the well-posedness of \eqref{Schindler's List}  is significantly stronger when compared with the real parts (Theorem \ref{Cries and Whispers}). 

In Section \ref{Walk the Line} we show that the solution of \eqref{Schindler's List} possesses some regularity provided that an \emph{a priori} energy estimate in the phase space holds (Lemma \ref{American Gangster}). Furthermore, we prove (Proposition \ref{The Graduate}) that
\begin{align*}
&\text{$H^{r}$ well-posedness is stronger than $C^{\infty}$ well-posedness}; \\
&\text{$C^{\infty}$ well-posedness is stronger than $G^{s}$ well-posedness (for any $s > 1$)}; \\
&\text{$G^{s}$ well-posedness is stronger than $C^{\omega}$ well-posedness}.
\end{align*}
As we already observed, the immaginary part of the leading coefficient $a_2$ plays an important role on the well-posedness of \eqref{Schindler's List}. Roughly speaking, $\Im~a_2 \leq 0$ is a necessary condition for well-posedness (Theorem \ref{The Lord of the Rings: The Return of the King}), whilst $\Im a_2 < 0$ (that is, $P$ given by \eqref{Her} is parabolic) is a sufficient one (Theorem \ref{Rear Window} and Corollary \ref{Lincoln}). These are briefly the contents of Sections  \ref{Memento} and \ref{Gladiator} respectively. 

Section \ref{The Dark Knight} contains the main result of this work, namely Theorem \ref{Tropa de Elite II}. We prove energy estimates in the phase space for a class of degenerate operators, yielding  well-posedness and ill-posedness for the associated Cauchy problem, depending on the behavior of the zeros of the coefficients. Finally, we close the paper making some final remarks and examples at Section \ref{Ex Machina}.
 
\section{Preliminaries} \label{The Post}

Before we  deal with any particular case of  \eqref{Schindler's List}, it is essential first to define precisely the objects which we alluded to at the introduction. As usual, we denote $C^{\infty}(\T^{N})$ as the space of complex-valued smooth functions defined on the torus and by $\D'(\T^N)$ its topological dual space, endowed with the topology of  \emph{pointwise convergency}.

Given  $u \in \D'(\T^{N})$, one defines for each $\xi \in \Z^N$ its \emph{Fourier coefficient} 	$\widehat{u}(\xi) = \left\langle u, e^{-ix\xi} \right\rangle.$ It is possible to prove that $u$ is completely described by its Fourier series. That is, 
\begin{equation*}
	u = \ds \lim_{j \to +\infty} \ds \sum_{|\xi| \leq j}  \widehat{u}(\xi) e^{i\xi x }  = \ds \sum_{\xi \in \Z^{N}} \widehat{u}(\xi) e^{i\xi x },
\end{equation*}
with convergence in $\D'(\T^N)$. For any fixed $r \in \R$,  consider  $\left\| \cdot \right\|_{r}$ the application described by
	\begin{equation*}
		\D'(\T^N) \ni u \mapsto \left\| u \right\|_{H^r(\T^N)}^{2}: =  \ds \sum _{\xi \in \Z^N} |\widehat{u}(\xi)|^{2} (1+ |\xi|)^{2r}.
	\end{equation*}
	\emph{We say that $u \in H^{r}(\T^N)$ if $\left\| u \right\|_{H^{r}(\T^N)} < + \infty$}. The spaces $H^{r}(\T^N)$ are usually known as \emph{periodic Sobolev spaces of order} $r$; they are Hilbert spaces, with inner product given by $(u, v)_{H^{r}(\T^N)} = \ds \sum_{\xi \in \Z^{N}} \widehat{u}(\xi) \overline{\widehat{v}(\xi)} (1+|\xi|)^{2r}.$ Moreover, $H^{r_{1}}(\T^N) \subset H^{r_{2}}(\T^N)$ if and only if $r_{1} \geq r_{2}$ and 
\begin{equation*}
C^{\infty}(\T^N) = \ds\bigcap_{r \in \R} H^{r}(\T^N), \ \ \ \ \D'(\T^N) = \ds \bigcup_{r \in \R} H^{r}(\T^N).
\end{equation*}

\begin{comment}
The first ones will be $H^{r}(\T^N)$, $C^{\infty}(\T^N)$ and $G^{s}(\T^N)$. Recall that for any  $k \in \Z_{+}$, $C^{k}(\T^N)$ is defined as the space of functions $f: \T^N \to \C$ with $k$ continuous derivatives. It is not difficult to check that 
%
\begin{equation*}
	C^{k}(\T^N) \ni f \mapsto \left\|f \right\|_{k}:= \ds \sum_{|\alpha| \leq k} \sup_{x \in \T^N} |D^{\alpha} u(x)|
\end{equation*}
%
is a norm and  $\left(C^{k}(\T^N), \left\| \cdot \right\|_{k} \right)$ is a Banach space . Then the space of \emph{smooth functions} on the torus is given by
%
\begin{equation*}
	C^{\infty}(\T^N) = \ds \bigcap_{k \in \Z_{+}} C^{k}(\T^N)
\end{equation*}
%
and can be endowed with the topology of the \emph{projective limit} of the family $\left\{C^{k}(\T^N) \right\}_{k \in \Z_{+}}$. That is, 
%
\begin{equation} \label{Rocky}
	C^{\infty}(\T^N) = \ds \lim_{\stackrel{\longleftarrow}{k \in \Z_{+}}} C^{k}(\T^N). 
\end{equation}
%
In terms of convergence, the definition above implies  that a sequence $\left\{f_j  \right\} \subset C^{\infty}(\T^N)$ converges to $f \in C^{\infty}(\T^N)$ if and only if 
%
\begin{equation*}
	f_{j} \to f \ \text{in $C^{k}(\T^N)$ \text{for each $k \in {\Z_{+}}$}}.
\end{equation*}	
\end{comment} 

The topology of $C^{\infty}(\T^{N})$ is given by the projective limit of the Sobolev spaces (For more details on projective and injective limits of locally convex spaces, we recommend \cite{kom} or the \textit{Appendix A} in \cite{mor}). That is, 
\begin{equation} \label{The Godfather}
	C^{\infty}(\T^N) = \ds \lim_{\stackrel{\longleftarrow}{\tau \in \Z_{+}}} H^{\tau}(\T^N).
\end{equation}
As a consequence of  Rellich's Theorem, the Sobolev inclusions  are \emph{compact}, which implies $C^{\infty}(\T^{N})$ is Fréchet-Schwartz. In particular, it is Montel, reflexive and separable.  Next we present the space of Gevrey functions. 

\begin{Def}
	For any $s \geq 1$, we say that $f \in C^{\infty}(\T^N)$ is an element of $G^{s}(\T^{N})$ if there exist $C, h > 0$ such that
	\begin{equation} \label{One Flew Over the Cuckoo's Nest}
		\left|D^{\alpha} f (x) \right| \leq C h^{|\alpha|} |\alpha|!^{s}, \ \ \forall \alpha \in \Z_{+}^{N}, \ \ \forall x \in \T^N. 
	\end{equation}
\end{Def}

The spaces $G^{s}(\T^N)$ are usually known as \emph{$2\pi$-periodic Gevrey spaces of order $s$}. When $s = 1$ we obtain the spaces of real-analytic $2 \pi$-periodic functions, also denoted by $C^{\omega}(\T^N)$. Let us briefly describe how the usual topology imposed in $G^{s}(\T^N)$ is constructed (we recommend \cite{lv} for more details and proofs of the results stated). For any $h> 0$, we set
\small{
\begin{equation*}
	G_{ h}^{s}(\T^N) = \left\{f \in G^{s}(\T^N); \ \left\|f \right\|_{h,s} := \ds \sup_{\alpha \in \Z_{+}^{N}} \left(\ds \frac{\left\|D^{\alpha} f  \right\|_{\infty}}{h^{|\alpha|}  |\alpha|!^{s}} \right)  < \infty \right\}. 
\end{equation*}
}
\normalsize
Clearly $\left\| \cdot \right\|_{h, s}$ is a norm and in fact it can be proved that $\left(G_{ h}^{s}(\T^N), \left\| \cdot  \right\|_{h, s} \right)$ is a Banach space. Furthermore if $0 \leq h_{1} \leq h_{2}$, the inclusions $G_{h_{1}}^{s}(\T^N) \hookrightarrow G_{h_{2}}^{s}(\T^N)$ are \emph{compact}. Taking any strictly increasing sequence $\left\{h_{n}\right\}_{n \in \N}$ of positive numbers such that $ h_{n} \to + \infty$ and setting
\begin{equation*}
	G^{s}(\T^N) = \ds \lim_{\stackrel{\longrightarrow}{n \in \N}} G_{h_{n}}^{s}(\T^N),
\end{equation*}
we conclude that $G^{s}(\T^N)$ is a \emph{DFS space} (for more details, we recommend Section $5$ in Appendix $A$ of \cite{mor}). In particular, we obtain from Corollary $A.5.11$ in \cite{mor} that $G^{s}(\T^N)$ is a Montel and reflexive space. Finally, it is important to mention that it is  possible to characterize elements of $G^{s}(\T^N)$ in terms of its Fourier Series. 
\begin{Pro} \label{Dr. Strangelove} 
Let $u \in \D'(\T^N)$; then $u \in G^{s}(\T^N)$ if and only if there exist $C, \delta > 0$ such that
\begin{equation} \label{Amadeus}
\left| \widehat{u}(\xi) \right| \leq C e^{- \delta |\xi|^{1/s}}, \ \ \forall \xi \in \Z^{N}. 
\end{equation}
Moreover, if $u \in G^{s}(\T^N)$ we can write $u = \ds \sum_{\xi \in \Z^{n}} \widehat{u}(\xi) e^{i x \xi}$, with convergence in $G^{s}(\T^N)$. 
\end{Pro}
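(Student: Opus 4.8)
The plan is to prove the equivalence by means of the classical bridge $\widehat{D_x^{\alpha} u}(\xi) = \xi^{\alpha}\widehat{u}(\xi)$, valid for every $u \in \D'(\T^N)$, and to convert factorial growth of derivatives into stretched-exponential decay of Fourier coefficients (and back) by optimizing the order of differentiation against $|\xi|$. Both implications, as well as the convergence statement, will drop out once this optimization is carried out with due care for the dimensional constants. The only elementary facts about the torus and about $G^s(\T^N)$ that I will invoke are that the Fourier coefficients determine the distribution (stated above), that $\widehat{D_x^\alpha u}(\xi)=\xi^\alpha\widehat u(\xi)$, and that $G^s(\T^N)=\varinjlim_n G^s_{h_n}(\T^N)$ with continuous inclusions of the Banach spaces $G^s_{h_n}(\T^N)$.

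\emph{From \eqref{One Flew Over the Cuckoo's Nest} to \eqref{Amadeus}.} Assume $\|D^{\alpha}u\|_{\infty}\le C h^{|\alpha|}|\alpha|!^{s}$ for all $\alpha$. Since $|\widehat{D_x^{\alpha}u}(\xi)|\le C_N\|D^{\alpha}u\|_{\infty}$ for a constant $C_N$ depending only on $N$ (the torus volume), we get $|\xi^{\alpha}|\,|\widehat{u}(\xi)|\le C'h^{|\alpha|}|\alpha|!^{s}$ for every $\xi$ and every $\alpha$. Fix $\xi\ne 0$, pick a coordinate $j_0$ with $|\xi_{j_0}|=\max_{1\le j\le N}|\xi_j|\ge |\xi|/\sqrt{N}$, and take $\alpha=n e_{j_0}$; this yields $|\widehat{u}(\xi)|\le C'(h\sqrt{N})^{n}\,n!^{s}\,|\xi|^{-n}$ for all $n\in\Z_+$. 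Using $n!\le n^{n}$, the right-hand side is at most $C'\bigl(h\sqrt{N}\,n^{s}/|\xi|\bigr)^{n}$; taking $n=\lfloor(|\xi|/(2h\sqrt{N}))^{1/s}\rfloor$ makes the bracket $\le 1/2$ once $|\xi|$ is large, so $|\widehat{u}(\xi)|\le C'2^{-n}\le C''e^{-\delta|\xi|^{1/s}}$ for a suitable $\delta>0$. The finitely many remaining $\xi$ (including $\xi=0$) are absorbed into the constant.

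\emph{From \eqref{Amadeus} to \eqref{One Flew Over the Cuckoo's Nest}, and convergence.} Assume $|\widehat{u}(\xi)|\le C e^{-\delta|\xi|^{1/s}}$. Then $\sum_{\xi}\widehat{u}(\xi)e^{ix\xi}$ and all its termwise derivatives converge absolutely and uniformly, so the series defines $v\in C^{\infty}(\T^N)$ with $D^{\alpha}v=\sum_{\xi}\xi^{\alpha}\widehat{u}(\xi)e^{ix\xi}$; as $v$ has the same Fourier coefficients as $u$, we get $v=u$. To estimate $\|D^{\alpha}u\|_{\infty}\le C\sum_{\xi}|\xi|^{|\alpha|}e^{-\delta|\xi|^{1/s}}$, split $e^{-\delta|\xi|^{1/s}}=e^{-\frac{\delta}{2}|\xi|^{1/s}}e^{-\frac{\delta}{2}|\xi|^{1/s}}$, use the one-variable identity $\sup_{t\ge 0}t^{m}e^{-\frac{\delta}{2}t^{1/s}}=(2sm/(\delta e))^{sm}$ together with $m^{sm}\le(e^{m}m!)^{s}$ to get $\sup_{\xi}|\xi|^{|\alpha|}e^{-\frac{\delta}{2}|\xi|^{1/s}}\le h^{|\alpha|}|\alpha|!^{s}$ with $h=(2s/\delta)^{s}$, and bound the leftover sum $\sum_{\xi}e^{-\frac{\delta}{2}|\xi|^{1/s}}$ by a finite constant. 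This gives \eqref{One Flew Over the Cuckoo's Nest}. For convergence in $G^s(\T^N)$, applying the same estimate to the tail $u-\sum_{|\xi|\le j}\widehat{u}(\xi)e^{ix\xi}$ yields $\bigl\|u-\sum_{|\xi|\le j}\widehat{u}(\xi)e^{ix\xi}\bigr\|_{h,s}\le C\sum_{|\xi|>j}e^{-\frac{\delta}{2}|\xi|^{1/s}}\to 0$, so the partial sums converge to $u$ in $G^s_{h}(\T^N)$, and hence in $G^s_{h_n}(\T^N)$ for the first $h_n\ge h$ in the defining sequence, and therefore in the inductive limit $G^s(\T^N)$.

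\emph{Main obstacle.} The only non-formal ingredient is the optimization step in each direction — selecting $n\asymp|\xi|^{1/s}$, respectively maximizing $t^{m}e^{-\frac{\delta}{2}t^{1/s}}$ — so that the factorial growth $|\alpha|!^{s}$ is traded cleanly for the stretched exponential $e^{-\delta|\xi|^{1/s}}$ and conversely, via the elementary Stirling-type bounds $(m/e)^{m}\le m!\le m^{m}$. Everything else is bookkeeping of constants (the $\sqrt{N}$ relating $|\xi|$ to $\max_j|\xi_j|$, the torus-measure factor $C_N$, the summability constant $\sum_\xi e^{-\frac{\delta}{2}|\xi|^{1/s}}$) and the separate treatment of small frequencies; no genuine difficulty remains once the optimization is set up correctly.
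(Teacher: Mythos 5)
Your proof is correct. It is worth noting that the paper itself does not prove Proposition \ref{Dr. Strangelove}; it cites \cite{lv} and treats the characterization as known, with the two quantitative Lemmas \ref{The People vs. Larry Flynt} and \ref{Boyhood} recording the constants in each direction. Your two optimization arguments are precisely the proofs of those two lemmas: in the forward direction, maximizing over the order $n$ of the one-directional derivative $D_{j_0}^n$ trades $\|D^\alpha u\|_\infty \le C h^{|\alpha|}|\alpha|!^s$ for $|\widehat u(\xi)|\le C'e^{-\delta|\xi|^{1/s}}$ (Lemma \ref{The People vs. Larry Flynt}); in the reverse direction, maximizing $t^m e^{-\frac{\delta}{2}t^{1/s}}$ recovers exactly the paper's constant $h=(2s/\delta)^s$ (Lemma \ref{Boyhood}). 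Your tail estimate for convergence in $G^s_h(\T^N)$ and the passage to the inductive limit via the compact inclusions $G^s_{h}\hookrightarrow G^s_{h_n}$ completes the ``moreover'' part, which the paper leaves implicit. The single small point you glossed is that the constant $C_N$ in $|\widehat{D^\alpha u}(\xi)|\le C_N\|D^\alpha u\|_\infty$ is just the measure of $\T^N$ under the convention $\widehat u(\xi)=\langle u,e^{-ix\xi}\rangle$; that is harmless. In short, a correct self-contained proof, by the standard route, matching the paper's lemma structure constant for constant.
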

We point out the following relation between the numbers $\delta$ in \eqref{Amadeus} and $h$ in \eqref{One Flew Over the Cuckoo's Nest}. 
\begin{Lem} \label{The People vs. Larry Flynt}
	Given arbitrary $s \geq 1$ and $h > 0$, there exists $\lambda (N, s) >0$ such that 
	\begin{equation*}
		f \in G_{h}^{s}(\T^N) \ \Rightarrow \ |\widehat{f}(\xi)|  \leq  2^{s} \left\|f \right\|_{h, s} e^{- \frac{\lambda}{h^{1/s}} |\xi|^{1/s}}, \ \ \forall \xi \in \Z^{N}.
	\end{equation*}
\end{Lem}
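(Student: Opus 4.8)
The plan is to combine integration by parts on the torus with the defining Gevrey bound \eqref{One Flew Over the Cuckoo's Nest} for $f$ and then optimise in the number of derivatives used. Fix $s\ge 1$, $h>0$ and $f\in G_{h}^{s}(\T^N)$. The case $\xi=0$ is immediate, since $|\widehat f(0)|\le\|f\|_{h,s}\le 2^{s}\|f\|_{h,s}$ and the exponential factor equals $1$; so assume $\xi\neq 0$ and pick an index $j\in\{1,\dots,N\}$ with $|\xi_j|=\max_{1\le i\le N}|\xi_i|\ge|\xi|/\sqrt{N}$.

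First I would record, for every $n\in\Z_{+}$, the identity $\xi_j^{\,n}\,\widehat f(\xi)=\widehat{D_{x_j}^{\,n}f}(\xi)$, obtained from $n$ integrations by parts (there are no boundary terms on $\T^N$). Since the normalised Fourier transform on $\T^N$ satisfies $|\widehat g(\xi)|\le\|g\|_{\infty}$ for every $g\in C^{\infty}(\T^N)$, combining this with \eqref{One Flew Over the Cuckoo's Nest} applied to the multi-index $\alpha=n e_j$ (so $|\alpha|=n$) gives
\begin{equation*}
|\xi_j|^{n}\,|\widehat f(\xi)|\ \le\ \|D_{x_j}^{\,n}f\|_{\infty}\ \le\ \|f\|_{h,s}\,h^{n}\,(n!)^{s},\qquad\forall n\in\Z_{+}.
\end{equation*}
Writing $\rho:=(|\xi_j|/h)^{1/s}$, this rearranges to $|\widehat f(\xi)|\le\|f\|_{h,s}\big(n!\,\rho^{-n}\big)^{s}$ for every $n$, hence
\begin{equation*}
|\widehat f(\xi)|\ \le\ \|f\|_{h,s}\,\Big(\inf_{n\in\Z_{+}}\frac{n!}{\rho^{n}}\Big)^{s}.
\end{equation*}

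The technical core is then an elementary one-variable estimate: there is an absolute constant $c_0\in(0,1)$ with $\inf_{n\in\Z_{+}}n!\,\rho^{-n}\le 2\,e^{-c_0\rho}$ for all $\rho>0$. I would prove this by a short case analysis. Below a fixed threshold the choice $n=0$ already gives $n!\rho^{-n}=1\le 2e^{-c_0\rho}$ provided $c_0$ is small enough. Above the threshold I take $n=\lfloor\rho\rfloor\ge 1$, so that $(n/\rho)^{n}\le 1$, and use the crude Stirling bound $n!\le 3\sqrt{n}\,(n/e)^{n}$ to obtain $n!\rho^{-n}\le 3e\sqrt{\rho}\,e^{-\rho}$, which is $\le 2e^{-c_0\rho}$ once $\rho$ is large (the polynomial factor being swallowed by the exponential, as $c_0<1$); the two regimes overlap for a suitable choice of $c_0$. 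The constant $2$ appearing here is precisely what yields the factor $2^{s}$ in the statement.

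Finally I would assemble the pieces. Raising the last display to the power $s$ and using $\rho=(|\xi_j|/h)^{1/s}\ge\big(|\xi|/(h\sqrt N)\big)^{1/s}=h^{-1/s}N^{-1/(2s)}|\xi|^{1/s}$ yields
\begin{equation*}
|\widehat f(\xi)|\ \le\ \|f\|_{h,s}\big(2e^{-c_0\rho}\big)^{s}\ =\ 2^{s}\|f\|_{h,s}\,e^{-c_0 s\rho}\ \le\ 2^{s}\|f\|_{h,s}\,e^{-\frac{\lambda}{h^{1/s}}|\xi|^{1/s}},
\end{equation*}
with $\lambda=\lambda(N,s):=c_0\,s\,N^{-1/(2s)}>0$, which depends only on $N$ and $s$, as required. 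There is no genuine obstacle here: the only point requiring care is to carry out the optimisation in $n$ with clean enough constants that the prefactor comes out as $2^{s}$ rather than a worse, dimension-dependent one — which is why one wants the normalised Fourier convention and the crude Stirling bound above. (One can in fact check the resulting $\lambda$ is of the right order, since any $\lambda>s$ would already be incompatible with the example $f=e^{i\xi_0 x_1}$, whose Gevrey norm is of size $e^{s(|\xi_0|/h)^{1/s}}$.)
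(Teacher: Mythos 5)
Your proof is correct, and it is the standard way to convert the pointwise Gevrey bound into exponential Fourier decay: choose the coordinate direction maximizing $|\xi_j|$, use $\xi_j^n\widehat f(\xi)=\widehat{D_{x_j}^n f}(\xi)$ together with $|\widehat g(\xi)|\le\|g\|_\infty$, optimize over $n$, and absorb the dimensional loss $|\xi_j|\ge|\xi|/\sqrt N$ into the constant $\lambda(N,s)$. The elementary estimate $\inf_{n}n!\rho^{-n}\le 2e^{-c_0\rho}$ is stated a bit loosely but checks out (e.g.\ $c_0=1/10$ works: for $\rho\le\ln 2/c_0$ take $n=0$; for $\rho\ge 5$ take $n=\lfloor\rho\rfloor$ and use $n!\le 3\sqrt n\,(n/e)^n$). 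The paper does not supply a proof of this lemma, but your argument recovers exactly the stated constant $2^s\|f\|_{h,s}$ and a $\lambda$ depending only on $N$ and $s$, and your remark about the normalized Fourier convention being essential for the factor $|\widehat g(\xi)|\le\|g\|_\infty$ (and hence for the precise prefactor $2^s$) is the right thing to be careful about.
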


\begin{Lem} \label{Boyhood}
	Fix $s\geq 1$ and let $g \in G^{s}(\T^N)$ such that $|\widehat{g}(\xi)| \leq C_{1} e^{- \delta |\xi|^{1/s}}$, for any $\xi \in \Z^N$ and   $C_{1}, \delta > 0$. Set $h = \left(\frac{2s} {\delta} \right)^{s}$;  then $g \in G^{s}_{h} (\T^N)$ and there exists a positive constant $C_{2}(\delta, N, s)$ such that $\left\|g \right\|_{h, s} \leq C_{1} C_{2}$. 
\end{Lem}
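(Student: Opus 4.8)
The plan is to pass to the Fourier side. Since $g\in G^{s}(\T^{N})\subset C^{\infty}(\T^{N})$, Proposition \ref{Dr. Strangelove} gives $g=\ds\sum_{\xi\in\Z^{N}}\widehat{g}(\xi)e^{ix\xi}$ with convergence in $G^{s}(\T^{N})$ (hence in $C^{\infty}(\T^{N})$), so we may differentiate term by term: $D^{\alpha}g=\ds\sum_{\xi\in\Z^{N}}\xi^{\alpha}\widehat{g}(\xi)e^{ix\xi}$ for every $\alpha\in\Z_{+}^{N}$. Using $|\xi^{\alpha}|=\prod_{j}|\xi_{j}|^{\alpha_{j}}\leq|\xi|^{|\alpha|}$ and the hypothesis $|\widehat{g}(\xi)|\leq C_{1}e^{-\delta|\xi|^{1/s}}$, I would estimate
\[
\|D^{\alpha}g\|_{\infty}\leq\sum_{\xi\in\Z^{N}}|\xi^{\alpha}|\,|\widehat{g}(\xi)|\leq C_{1}\sum_{\xi\in\Z^{N}}|\xi|^{|\alpha|}e^{-\delta|\xi|^{1/s}}.
\]
Everything then reduces to bounding the last series by $C_{2}\,h^{|\alpha|}|\alpha|!^{s}$ with $C_{2}$ depending only on $\delta,N,s$.

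The elementary core is a one–variable estimate. Split $e^{-\delta|\xi|^{1/s}}=e^{-(\delta/2)|\xi|^{1/s}}\cdot e^{-(\delta/2)|\xi|^{1/s}}$ and use the first factor to absorb $|\xi|^{|\alpha|}$. Setting $u=|\xi|^{1/s}\geq0$, a direct maximization gives $\ds\sup_{u\geq0}u^{s|\alpha|}e^{-(\delta/2)u}=\Big(\tfrac{2s|\alpha|}{e\delta}\Big)^{s|\alpha|}$ (the maximum at $u=2s|\alpha|/\delta$; the bound is $1$ when $\alpha=0$). On the other hand, the crude Stirling bound $|\alpha|!\geq(|\alpha|/e)^{|\alpha|}$ combined with the prescribed $h=(2s/\delta)^{s}$ yields $h^{|\alpha|}|\alpha|!^{s}\geq(2s/\delta)^{s|\alpha|}(|\alpha|/e)^{s|\alpha|}=\Big(\tfrac{2s|\alpha|}{e\delta}\Big)^{s|\alpha|}$. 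Comparing the two, $|\xi|^{|\alpha|}e^{-(\delta/2)|\xi|^{1/s}}\leq h^{|\alpha|}|\alpha|!^{s}$ for every $\xi\in\Z^{N}$ and every $\alpha\in\Z_{+}^{N}$; this is precisely where the factor $2$ in $h=(2s/\delta)^{s}$ is used.

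It remains to note that the leftover factor is summable: $C_{2}:=\ds\sum_{\xi\in\Z^{N}}e^{-(\delta/2)|\xi|^{1/s}}<\infty$, since the number of lattice points with $|\xi|\approx R$ grows polynomially in $R$ while $e^{-(\delta/2)R^{1/s}}$ decays faster than any power (substituting $R=\rho^{s}$ turns the comparison into a convergent integral), and evidently $C_{2}=C_{2}(\delta,N,s)$. Combining the previous estimates, $\|D^{\alpha}g\|_{\infty}\leq C_{1}C_{2}\,h^{|\alpha|}|\alpha|!^{s}$ for all $\alpha$, so $\|g\|_{h,s}=\ds\sup_{\alpha}\|D^{\alpha}g\|_{\infty}/(h^{|\alpha|}|\alpha|!^{s})\leq C_{1}C_{2}<\infty$; in particular $g\in G^{s}_{h}(\T^{N})$, which is the claim.

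I do not expect a genuine obstacle here — the statement is essentially a quantitative converse of Proposition \ref{Dr. Strangelove}. The only points requiring care are the bookkeeping of constants so that exactly $h=(2s/\delta)^{s}$ (not a larger value) comes out, which is what forces the "halve the exponential" device, and a sanity check of the degenerate cases $\alpha=0$ and $\xi=0$ (with the usual conventions $0^{0}=1$ and $|\cdot|^{0}\equiv1$) so that each inequality above holds literally.
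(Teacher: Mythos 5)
The paper states Lemma \ref{Boyhood} without proof, so there is no in-text argument to compare against. Your argument is correct and standard: after differentiating the Fourier series term by term, you halve the exponential, absorb $|\xi|^{|\alpha|}$ via the maximization $\sup_{u\geq 0}u^{s|\alpha|}e^{-(\delta/2)u}=(2s|\alpha|/(e\delta))^{s|\alpha|}$, compare with $h^{|\alpha|}|\alpha|!^{s}$ using $|\alpha|!\geq(|\alpha|/e)^{|\alpha|}$ to land exactly on $h=(2s/\delta)^{s}$, and observe that $\sum_{\xi\in\Z^{N}}e^{-(\delta/2)|\xi|^{1/s}}$ converges; all the degenerate cases $\alpha=0$, $\xi=0$ are handled correctly.
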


\subsection{Characterizations for  classes of continuous curves} \label{Molly's Game}

The main goal in this subsection is to completely understand the following spaces of functions:
\begin{enumerate} [leftmargin=*,   label=\roman*)]
	\item $C \left([0, T]; H^{r}(\T^{N}) \right)$, for any  $r \in \R$;  \label{Star Wars}
	\item $C \left([0, T]; C^{\infty}(\T^{N}) \right);$ \label{The Empire Strikes Back}  
	\item $C \left([0, T]; G^{s}(\T^{N}) \right)$, for any $s \geq 1$. \label{Return of the Jedi}
\end{enumerate}
Since we intend to work extensively with them throughout this manuscript, it is important to understand the meaning and different characterizations of their elements.

Note that case \ref{Star Wars} is described by the continuity of applications between metric spaces. Regarding case \ref{The Empire Strikes Back}, it is a consequence of a standard result (see for instance Theorem $A.3.3$ in \cite{mor}) the following 
\begin{Teo} \label{The Godfather II}
	An application $g: [0, T] \to C^{\infty}(\T^N)$ is continuous if and only if for every $r \in \R$ the map $\iota_{r} \circ g: [0,T] \to H^{r}(\T^N)$ is continuous, where  $\iota_{r}: C^{\infty}(\T^N) \to H^{r}(\T^N)$  denotes the canonical inclusion.  
\end{Teo}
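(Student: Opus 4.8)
The plan is to derive this from the fact that $C^\infty(\T^N)$ carries the projective limit topology \eqref{The Godfather}, together with the universal property characterizing continuous maps into a projective limit. The forward direction is immediate: if $g\colon [0,T]\to C^\infty(\T^N)$ is continuous, then composing with the (continuous) canonical inclusions $\iota_\tau\colon C^\infty(\T^N)\to H^\tau(\T^N)$ yields continuous maps $\iota_\tau\circ g$ for every $\tau\in\Z_+$, and for a general $r\in\R$ one factors $\iota_r$ through $\iota_\tau$ for any integer $\tau\ge r$ using the continuous Sobolev inclusion $H^\tau(\T^N)\hookrightarrow H^r(\T^N)$; hence $\iota_r\circ g$ is continuous for all $r\in\R$.

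For the converse, suppose $\iota_r\circ g$ is continuous for every $r\in\R$ (equivalently, for every $r$ in the cofinal family $\Z_+$). Since $[0,T]$ is a metric space, it suffices to check sequential continuity: fix $t_0\in[0,T]$ and a sequence $t_n\to t_0$. We must show $g(t_n)\to g(t_0)$ in $C^\infty(\T^N)$. By definition of the projective limit topology, a basis of neighborhoods of $g(t_0)$ is given by finite intersections of sets of the form $\iota_\tau^{-1}(V)$, where $V$ is a neighborhood of $\iota_\tau(g(t_0))$ in $H^\tau(\T^N)$; equivalently, convergence in $C^\infty(\T^N)$ is precisely convergence in $H^\tau(\T^N)$ for every $\tau\in\Z_+$. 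But $\iota_\tau\circ g$ continuous gives exactly $\iota_\tau(g(t_n))\to\iota_\tau(g(t_0))$ in $H^\tau(\T^N)$ for each $\tau$, so $g(t_n)\to g(t_0)$ in $C^\infty(\T^N)$. This establishes continuity of $g$.

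The only genuine subtlety — and the step I would be most careful about — is the passage from arbitrary real orders $r$ to the cofinal subfamily of integer orders $\tau\in\Z_+$ appearing in \eqref{The Godfather}: one must invoke that $\{H^\tau(\T^N)\}_{\tau\in\Z_+}$ is cofinal in $\{H^r(\T^N)\}_{r\in\R}$ under the continuous inclusions $H^{r_1}(\T^N)\subset H^{r_2}(\T^N)$ for $r_1\ge r_2$, so that the projective limits over the two index sets coincide and the neighborhood bases agree. Everything else is a direct application of the standard characterization of continuity into a projective limit of locally convex spaces (Theorem A.3.3 in \cite{mor}), and the reduction to sequences is legitimate because the domain $[0,T]$ is metrizable.
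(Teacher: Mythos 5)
Your proposal is correct and follows the same route the paper intends: the paper simply cites Theorem A.3.3 in \cite{mor} (the standard characterization of continuous maps into a projective limit) and gives no proof, and your argument is precisely the unwinding of that citation — the cofinality of $\Z_+$ in $\R$, the universal property of the projective limit \eqref{The Godfather}, and sequential continuity via metrizability of $[0,T]$.
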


We now proceed to case \ref{Return of the Jedi}; let $u: [0, T] \to G^{s}(\T^N)$ be a continuous map; since $[0, T]$ is compact, the same holds true for $u([0,T])$. In particular, the set is bounded, which means that
\begin{equation*}
	\text{$u([0, T])$ is a bounded subset of  $G^{s}_{h}(\T^N)$}, \ \text{for some  $h > 0$ (see Theorem A.5.7 in \cite{mor})}.  
\end{equation*}
By possibly increasing $h$ and applying Theorem $6'$ in \cite{kom}, we may restrict the codomain and obtain $u: [0, T] \to G^{s}_{h}(\T^N)$ a continuous map. Therefore we have the following
\begin{Teo} \label{On the Waterfront}
	A map $u: [0, T] \to G^{s}(\T^N)$ is continuous if and only if there exists $h > 0$ such that $u([0, T]) \subset G^{s}_{h}(\T^N)$ and $u: [0, T] \to G^{s}_{h}(\T^N)$ is continuous.  
\end{Teo}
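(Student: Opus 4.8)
The plan is to prove Theorem \ref{On the Waterfront} by carefully unwinding the two implications. For the easier direction, suppose there exists $h > 0$ such that $u([0,T]) \subset G^{s}_{h}(\T^N)$ and $u \colon [0,T] \to G^{s}_{h}(\T^N)$ is continuous. Since $G^{s}_{h}(\T^N) \hookrightarrow G^{s}(\T^N)$ continuously (the injective limit carries the finest locally convex topology making every inclusion continuous, so in particular each $G^{s}_{h_n}(\T^N) \hookrightarrow G^{s}(\T^N)$ is continuous, and an arbitrary $h$ can be dominated by some $h_n$ in the chosen sequence), composing with this inclusion yields that $u \colon [0,T] \to G^{s}(\T^N)$ is continuous. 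This direction is essentially formal.

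For the harder direction, assume $u \colon [0,T] \to G^{s}(\T^N)$ is continuous. The key is to produce a single $h$ that works. As sketched in the excerpt, since $[0,T]$ is compact and $u$ is continuous, $u([0,T])$ is a compact, hence bounded, subset of the DFS space $G^{s}(\T^N)$. I would then invoke the standard structural fact about (DFS) spaces (Theorem A.5.7 in \cite{mor}): every bounded subset of $G^{s}(\T^N) = \varinjlim_n G^{s}_{h_n}(\T^N)$ is contained in, and bounded in, one of the steps $G^{s}_{h_{n_0}}(\T^N)$. So $u([0,T]) \subset G^{s}_{h_{n_0}}(\T^N)$ for some index $n_0$; set $h_0 = h_{n_0}$. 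What remains is the continuity of the corestriction $u \colon [0,T] \to G^{s}_{h_0}(\T^N)$, and here lies the main obstacle: a priori the injective limit topology restricted to a step need not coincide with the step's own Banach topology, so corestricting a continuous map is not automatic. This is precisely where Komatsu's Theorem $6'$ in \cite{kom} enters — it guarantees that for a (DFS) space (regular injective limit of a compact spectrum), the topology induced from the limit on each step $G^{s}_{h_n}(\T^N)$ agrees with the original Banach topology of that step, possibly after passing to a slightly larger step $h_{n_1} \geq h_{n_0}$. Replacing $h_0$ by this larger value $h := h_{n_1}$, we get that $u([0,T]) \subset G^{s}_{h}(\T^N)$ and, since the inclusion map between consecutive steps is continuous and the limit topology agrees with the step topology there, $u \colon [0,T] \to G^{s}_{h}(\T^N)$ is continuous. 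This establishes the forward implication and completes the proof.

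The step I expect to be the genuine crux is the corestriction/continuity argument: verifying that the abstract functional-analytic machinery (regularity of the (DFS) limit, agreement of induced and intrinsic topologies on the Banach steps) applies to the concrete spectrum $\{G^{s}_{h_n}(\T^N)\}$ — which in turn rests on the stated compactness of the inclusions $G^{s}_{h_1}(\T^N) \hookrightarrow G^{s}_{h_2}(\T^N)$ for $h_1 < h_2$. Once the compactness of these linking maps is in hand (it is cited from \cite{lv}), everything else is an application of the cited theorems from \cite{kom} and \cite{mor}, with no hard estimates required.
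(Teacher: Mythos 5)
Your proof is correct and follows essentially the same route as the paper's sketch: compactness of $u([0,T])$ gives boundedness, Theorem A.5.7 of \cite{mor} localizes into a step $G^{s}_{h}(\T^N)$, and Komatsu's Theorem $6'$ yields continuity of the corestriction; you also supply the trivial converse, which the paper omits. One caution worth recording: your paraphrase of Komatsu's result --- that the inductive-limit topology ``agrees with the original Banach topology of that step'' --- is too strong if read as a statement about the whole step $G^{s}_{h_n}(\T^N)$; what the theorem actually gives (and all that is needed) is agreement of the two topologies on \emph{bounded} subsets of the step, which suffices here because $u([0,T])$ is compact, hence bounded.
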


\subsection[Distributions with values in Sobolev Spaces]{Distributions in $(0,T)$ with values in $H^{r}(\T^{N})$}
	
As stated previously, we would like to find solutions for \eqref{Schindler's List} in the form $u(t,x)$, where $u (\cdot , x)$ is continuous for any fixed $x$ and $u(t, \cdot)$ is (at least) an element of $H^{r}(\T^{N})$ for any $t$ fixed. A quite intuitive way to reach this goal would be to write $u$ in terms of its Fourier series and to convert the problem into a denumerable family of first-order ODE's parametrized by $\xi \in \Z^{N}$. In order to turn such idea into a formal result, we introduce  the following concept:

\begin{Def} \label{Barry Lyndon} 
Given $T > 0$ and $r \in \R$, we define  $D'((0,T);H^{r}(\T^{N}))$ as the space of \emph{continuous linear functionals} $u : C^{\infty}_{c}((0,T)) \to H^{r}(\T^{N}).$
\end{Def}

Similarly to other spaces of distributions, we impose in $D'((0,T); H^{r}(\T^{N}))$ the topology of \emph{pointwise convergency}. Thus a sequence $\left\{u_{k}\right\}_{k \in \N}$ converges to $u$ in $D'((0,T);H^{r}(\T^{N}))$ if and only if
\begin{equation*}
	\| u_{k} (\phi)  - u(\phi) \|_{H^{r}(\T^{N})} \to 0, \ \ \ \forall \phi \in C^{\infty}_{c}\left((0,T) \right).
\end{equation*}
Next we relate this unusual space of distributions with the sets described in  Subsection \ref{Molly's Game}.  
	
	\begin{Pro} \label{Raging Bull} 
		$C([0,T]; H^{r}(\T^{N}))$ is a subspace of $D'((0,T);H^{r}(\T^{N}))$.
	\end{Pro}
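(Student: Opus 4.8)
The plan is to realize a continuous curve $u \in C([0,T]; H^{r}(\T^{N}))$ as a distribution in $D'((0,T); H^{r}(\T^{N}))$ by integration against test functions. Given $\phi \in C^{\infty}_{c}((0,T))$, I would define
\[
\widetilde{u}(\phi) := \int_{0}^{T} u(t)\, \phi(t)\, \dd t,
\]
where the integral is the $H^{r}(\T^{N})$-valued Riemann (or Bochner) integral of the continuous, compactly supported map $t \mapsto u(t)\phi(t)$ from $[0,T]$ into the Banach space $H^{r}(\T^{N})$. The first step is thus to check that this integral makes sense: the integrand is continuous with values in a Banach space and has compact support in $(0,T)$, so the vector-valued Riemann integral exists (one may approximate by Riemann sums, which form a Cauchy net in $H^{r}(\T^{N})$ by uniform continuity on the compact support). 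This produces a well-defined element $\widetilde{u}(\phi) \in H^{r}(\T^{N})$ for each $\phi$.

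Next I would verify that $\widetilde{u}: C^{\infty}_{c}((0,T)) \to H^{r}(\T^{N})$ is linear and continuous, i.e.\ that it belongs to $D'((0,T); H^{r}(\T^{N}))$ in the sense of Definition \ref{Barry Lyndon}. Linearity is immediate from linearity of the integral. For continuity, if $\phi_{k} \to \phi$ in $C^{\infty}_{c}((0,T))$ — meaning the supports are contained in a fixed compact $K \subset (0,T)$ and $\phi_{k} \to \phi$ uniformly — then
\[
\left\| \widetilde{u}(\phi_{k}) - \widetilde{u}(\phi) \right\|_{H^{r}(\T^{N})} \leq \int_{K} \left\| u(t) \right\|_{H^{r}(\T^{N})} \left| \phi_{k}(t) - \phi(t) \right| \dd t \leq |K| \cdot \sup_{t \in K}\left\| u(t) \right\|_{H^{r}(\T^{N})} \cdot \sup_{t \in K}\left| \phi_{k}(t) - \phi(t) \right|,
\]
and the right-hand side tends to $0$ since $t \mapsto \|u(t)\|_{H^{r}(\T^{N})}$ is bounded on $K$ (continuous image of a compact set). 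Hence $\widetilde{u}$ is continuous.

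Finally, the map $u \mapsto \widetilde{u}$ must be shown to be injective, so that $C([0,T]; H^{r}(\T^{N}))$ genuinely embeds as a subspace. If $\widetilde{u} = 0$, then $\int_{0}^{T} u(t)\phi(t)\,\dd t = 0$ in $H^{r}(\T^{N})$ for all $\phi \in C^{\infty}_{c}((0,T))$; applying a continuous linear functional $\Lambda$ on $H^{r}(\T^{N})$ (e.g.\ testing the Fourier coefficient at a fixed $\xi$, $\Lambda(v) = \widehat{v}(\xi)$) gives $\int_{0}^{T} \Lambda(u(t))\,\phi(t)\,\dd t = 0$ for all such $\phi$, and since $t \mapsto \Lambda(u(t))$ is a continuous scalar function on $(0,T)$, the fundamental lemma of the calculus of variations forces $\Lambda(u(t)) = 0$ for all $t \in (0,T)$. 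As this holds for enough functionals to separate points of $H^{r}(\T^{N})$ (for instance all $\widehat{(\cdot)}(\xi)$, $\xi \in \Z^{N}$), we get $u(t) = 0$ for $t \in (0,T)$, and by continuity $u \equiv 0$ on $[0,T]$. I do not expect a serious obstacle here; the only point requiring a little care is setting up the vector-valued integral cleanly and making sure the notion of ``subspace'' intended is the injective embedding via integration against test functions, which should be stated explicitly.
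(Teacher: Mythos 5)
Your proposal is correct and takes essentially the same route as the paper: define the embedding by the vector-valued integral $u \mapsto \left(\phi \mapsto \int_0^T \phi(t)u(t)\,dt\right)$, check membership in $D'((0,T);H^r(\T^N))$ via the obvious norm estimate, and prove injectivity by applying continuous linear functionals (the paper uses an arbitrary $\iota \in (H^r(\T^N))'$ and cites the fact that such functionals pass inside the Bochner integral, while you instantiate with Fourier-coefficient evaluation — an equivalent choice). The only cosmetic differences are that you sketch the construction of the integral via Riemann sums where the paper simply invokes the Bochner integral, and that the paper additionally records continuity of the embedding map $\mathcal{T}$ itself, which your argument implicitly contains in the displayed estimate.
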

	
	\begin{proof}
		Take $v \in C([0,T]; H^{r}(\T^N))$; then for every $\phi(t) \in C^{\infty}_{c}((0,T))$, the function $w(t) = \phi(t) v(t)$ is an element of  $C([0,T]; H^{r}(\T^N))$. Now consider
		\begin{equation*}
			\begin{split}
				\mathcal{T}: C([0,T]: H^{r}(\T^N)) &\to D'((0,T); H^{r}(\T^N)) \\
				u &\mapsto \mathcal{T}_u: C^{\infty}_{c}((0,T)) \to H^{r}(\T^N),	
			\end{split}
		\end{equation*}
		where $\mathcal{T}_u$ denotes the map $\phi \mapsto \ds \int_{0}^{T} \phi(t)u(t) dt =: \langle \mathcal{T}_u, \phi \rangle$
		and the integral above represents the \emph{Bochner Integral taking values in $H^{r}(\T^N)$}(for more details we recommend Section $5$ of Chapter $5$ in \cite{yos}). 
		
		Since $\phi(t)u(t) \in C([0,T]; H^{r}(\T^N))$ it follows that
		\begin{align*}
			\left\|\langle \mathcal{T}_u, \phi \rangle \right\|_{H^{r}(\T^N)} \leq  \int_{0}^{T} \| \phi(t)u(t) \|_{H^{r}(\T^N)} dt \leq T \left(\ds \max_{t \in [0,T]} \|u(t)\|_{H^{r}(\T^N)} \right) \left( \ds \max_{t \in [0,T]} |\phi(t)| \right),
		\end{align*}
		which shows that $\mathcal{T}_{u} \in D'((0,T); H^{r}(\T^{N})) $ and $\mathcal{T}$ is continuous.
		
		Finally we  prove that $\mathcal{T}$ is injective. For this purpose, take $u \in C([0,T]; H^{r}(\T^{N}))$ satisfying $\langle T_u, \phi \rangle = 0$ for every $\phi \in C^{\infty}_{c}((0,T))$ and let $\iota \in (H^{r}(\T^{N}))^{'}$ be an arbitrary continuous linear functional. Then (see Corollary $2$, pg 134 of \cite{yos})
		\begin{equation*}
			0 = \iota (\langle T_u, \phi \rangle) = \iota \left( \int_{0}^{T} \phi(t) u(t) dt \right) = \int_{0}^{T} \iota (\phi(t)u(t)) dt = \int_{0}^{T} \phi(t) \iota(u(t)) dt, \ \ \forall \phi \in C^{\infty}_{c}((0,T)).
		\end{equation*}
		Since $\iota(u(t))$ is a continuous scalar function, it follows that $\iota(u(t)) = 0$ for every $t \in (0,T)$. Hence $u(t) = 0$ for every $t \in [0,T]$, which finalizes the proof. 
	\end{proof}
	
	\begin{Obs}
		From now on we will denote $T_{u}$ just as $u$. 
	\end{Obs}

	Using the spaces introduced in Definition \ref{Barry Lyndon}, one can establish the concept of weak derivative (with respect to $t$) for elements of $ C([0,T]; H^{r}(\T^N))$. Indeed, set for any $u \in D'((0,T); H^{r}(\T^{N}))$
	\begin{equation*}
		\langle D_t u, \phi \rangle = \langle u, - D_t\phi \rangle, \ \ \ \forall \phi(t) \in C^{\infty}_{c} \left((0,T) \right).
	\end{equation*}
	In this case, it is not difficult to check that $D_{t}:D'((0,T); H^{r}(\T^{N})) \to  D'((0,T); H^{r}(\T^{N}))$ is continuous.

	\subsection[Fourier series for this space of distributions]{Fourier series for elements of $C([0,T]; H^{r}(\T^{N}))$} \label{Gravity}

	Note that we still have not used  any property of the spaces $H^{r}(\T^N)$ other than they are Banach. The fact that we may write any element of it in terms of its Fourier series will be extremely useful throughout the whole section.
	
	Let $u \in C \left([0, T]; H^{r}(\T^N) \right)$; then each fixed $t \in [0, T]$ we have $u(t,x) = \ds \sum_{\xi \in \Z^N} \widehat{u}(t,\xi) e^{i\xi x}$, where $\widehat{u}(t,\xi) = \langle u(t,x) , e^{-i\xi x} \rangle$. It follows from standard properties of Fourier analysis that such series is \emph{uniquely defined}. As a consequence of the continuity of $u$, for each $\xi \in \Z^N$ fixed the map $t \mapsto \widehat{u}(t,\xi)$ is continuous. Moreover, it also implies that 
	\begin{equation*}
		\begin{split}
			[0, T] &\to H^{\tau}(\T^N) \\
			t &\mapsto \widehat{u}(t,\xi)e^{i\xi x}
		\end{split}
	\end{equation*}
	is an element of $C([0,T]; H^{\tau}(\T^N))$, for any $\tau \in \R$. 
	
	\begin{Teo} \label{Chinatown} 
		Let $u \in C([0,T]; H^{r}(\T^N))$; then the limit 
		\begin{equation*}
			v := \ds \sum_{\xi \in \Z^N} \widehat{u}(t,\xi) e^{i\xi x}
		\end{equation*}
		is an element of  $D'((0,T); H^{r}(\T^N))$ and $u = v$. In other words, $u$ can be written in terms of its partial Fourier Series in  $D'((0,T); H^{r}(\T^N))$. 
	\end{Teo}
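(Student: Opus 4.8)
The plan is to show that the formal series $v = \sum_{\xi \in \Z^N} \widehat{u}(t,\xi) e^{i\xi x}$ converges in $D'((0,T); H^r(\T^N))$ and that, as a distribution, it coincides with $u$. Fix a test function $\phi \in C^\infty_c((0,T))$. For each $\xi$, the $\xi$-th term of the series, viewed as an element of $D'((0,T); H^r(\T^N))$, sends $\phi$ to $\left(\int_0^T \phi(t) \widehat{u}(t,\xi)\, dt\right) e^{i\xi x} \in H^r(\T^N)$, which makes sense because $t \mapsto \widehat{u}(t,\xi)$ is continuous and scalar-valued. So the first step is to check that the partial sums $S_j(\phi) := \sum_{|\xi| \le j} \left(\int_0^T \phi(t)\widehat{u}(t,\xi)\, dt\right) e^{i\xi x}$ form a Cauchy sequence in $H^r(\T^N)$, hence converge; this identifies $v$ as a well-defined element of $D'((0,T); H^r(\T^N))$.

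For that Cauchy estimate I would compute, for $j < k$,
\begin{equation*}
\left\| S_k(\phi) - S_j(\phi) \right\|_{H^r(\T^N)}^2 = \sum_{j < |\xi| \le k} \left| \int_0^T \phi(t) \widehat{u}(t,\xi)\, dt \right|^2 (1+|\xi|)^{2r}.
\end{equation*}
Here I would use that for each fixed $t$ the function $x \mapsto u(t,x)$ lies in $H^r(\T^N)$, so $\sum_\xi |\widehat{u}(t,\xi)|^2 (1+|\xi|)^{2r} < \infty$; by the continuity of $t \mapsto u(t,\cdot)$ into $H^r(\T^N)$ and compactness of $[0,T]$, one has a uniform bound $\sup_{t \in [0,T]} \| u(t,\cdot)\|_{H^r(\T^N)} =: M < \infty$. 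Pulling the integral inside via Cauchy--Schwarz (or Jensen) gives $\left| \int_0^T \phi(t)\widehat{u}(t,\xi)\, dt \right|^2 \le T \|\phi\|_\infty^2 \int_0^T |\widehat{u}(t,\xi)|^2\, dt$, and therefore
\begin{equation*}
\left\| S_k(\phi) - S_j(\phi) \right\|_{H^r(\T^N)}^2 \le T \|\phi\|_\infty^2 \int_0^T \sum_{j < |\xi| \le k} |\widehat{u}(t,\xi)|^2 (1+|\xi|)^{2r}\, dt.
\end{equation*}
The integrand is dominated by the integrable (indeed bounded by $M^2$) function $t \mapsto \|u(t,\cdot)\|_{H^r(\T^N)}^2$ and tends to $0$ pointwise in $t$ as $j \to \infty$, so dominated convergence gives the Cauchy property. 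The same dominated-convergence argument shows $S_j(\phi) \to \langle u, \phi\rangle$ in $H^r(\T^N)$, where $\langle u, \phi \rangle = \int_0^T \phi(t) u(t,\cdot)\, dt$ is the Bochner integral from Proposition \ref{Raging Bull}: one checks that the $\xi$-th Fourier coefficient of the Bochner integral $\int_0^T \phi(t) u(t,\cdot)\, dt$ is exactly $\int_0^T \phi(t)\widehat{u}(t,\xi)\, dt$ (this commutes because evaluating a Fourier coefficient is applying a continuous linear functional to a Bochner integral, cf. Corollary 2, pg.\ 134 of \cite{yos}). Since a function in $H^r(\T^N)$ is determined by its Fourier coefficients, the limit $v(\phi)$ must equal $\langle u, \phi\rangle$, i.e. $u = v$ in $D'((0,T); H^r(\T^N))$.

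The main obstacle is the interchange of the (infinite) summation over $\xi$ with the Bochner integral in $t$, that is, justifying rigorously that $\widehat{\left(\int_0^T \phi(t) u(t,\cdot)\, dt\right)}(\xi) = \int_0^T \phi(t) \widehat{u}(t,\xi)\, dt$ and that the resulting series reassembles to the Bochner integral in the $H^r$-norm; everything else is a routine dominated-convergence estimate. I would handle this by invoking the commutation of continuous linear functionals with the Bochner integral (the functional $w \mapsto \widehat{w}(\xi)$ on $H^r(\T^N)$) to get the coefficient identity, and then use the Cauchy estimate above to conclude $H^r$-convergence of the partial sums to that Bochner integral, rather than trying to prove a Fubini-type theorem from scratch.
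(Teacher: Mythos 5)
Your proof is correct and rests on the same ingredients as the paper's: Proposition \ref{Raging Bull} to express the action on a test function $\phi$ as an $H^r$-valued Bochner integral, the uniform bound $\sup_{t}\|u(t,\cdot)\|_{H^{r}(\T^{N})}<\infty$ coming from continuity on a compact interval, and the Dominated Convergence Theorem to pass to the limit.

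The organization differs slightly. The paper works directly with the partial sums $s_k(t,x)=\sum_{|\xi|\le k}\widehat{u}(t,\xi)e^{i\xi x}$, viewed as elements of $C([0,T];H^{r}(\T^{N}))$: it shows $\phi(t)\bigl(s_k(t)-u(t)\bigr)\to 0$ in $H^{r}$ for each $t$, dominated by a fixed $H^{r}$-bound, and concludes $\int_0^T\phi(t)s_k(t)\,dt\to\int_0^T\phi(t)u(t)\,dt$ in $H^{r}(\T^{N})$, so the limit is identified at once as $\langle u,\phi\rangle$ without any auxiliary uniqueness argument. You instead fix $\phi$, establish the Cauchy property of the scalar Fourier-coefficient series via Cauchy--Schwarz plus dominated convergence, and then identify the limit by pulling the functional $w\mapsto\widehat{w}(\xi)$ through the Bochner integral and invoking uniqueness of Fourier coefficients in $H^{r}(\T^{N})$. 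Both are valid; the paper's version is marginally more economical because it avoids the identification step, but your route makes the reliance on the ``continuous functionals commute with Bochner integrals'' principle explicit, which the paper leaves implicit inside Proposition \ref{Raging Bull}.
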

	\begin{proof}
		We fix $u \in C([0,T]; H^{r}(\T^N))$ and set, for any $k\in \N_{0}$,
		\begin{equation*}
			s_k(t, x) = \sum_{|\xi| \leq k} \widehat{u}(t,\xi) e^{i\xi x}, \ \ \ \forall t \in [0,T], \ \ \forall x \in \T^N.
		\end{equation*} 
		It follows from our last claim that $s_k(t) \in C([0,T]; H^{r}(\T^N))$ and from Proposition \ref{Raging Bull} that 
		\begin{equation*}
			\langle s_{k}(t, x), \phi(t) \rangle = \ds \int_{0}^{T} \phi(t) s_{k}(t, x) dt, \ \ \forall \phi \in C_{c}^{\infty}\left((0, T) \right).
		\end{equation*}
		
		Fix $\phi \in C_{c}^{\infty}\left((0, T) \right)$; then the sequence $\phi(t) s_k(t, x)$ is uniformly bounded in $H^{r}(\T^N)$. Indeed,  
		\begin{align}
			\left\| \phi(t) s_k(t, x) \right\|_{H^{r}(\T^N)} &\leq \left\| \phi(t) \left(s_k(t, x) - u(t) \right) \right\|_{H^{r}(\T^N)} + \left\| \phi(t)  u(t) \right\|_{H^{r}(\T^N)} \nonumber \\
			&\leq \left(\ds \max_{t \in [0,T]} |\phi(t)| \right) \left[\left\| \left(s_k(t) - u(t) \right) \right\|_{H^{r}(\T^N)} + \left\| u(t) \right\|_{H^{r}(\T^N)} \right] \label{Blade Runner}.
		\end{align} 
		On the other hand, for any fixed $t \in [0, T]$ 
		\begin{equation} \label{Blade Runner 2049}
			\| s_k(t, x) - u(t) \|^{2}_{H^{r}(\T^N)} = \ds \sum_{|\xi| > k} |\widehat{u}(t,\xi)|^{2} (1+ |\xi|)^{2r} \leq \left\| u(t) \right\|_{H^{r}(\T^N)}^{2}.
		\end{equation}
		By associating \eqref{Blade Runner} to \eqref{Blade Runner 2049} we deduce that 
		\begin{equation*}
			\ds \max_{t \in [0,T]} \left\| \phi(t) s_k(t,x) \right\|_{H^{r}(\T^N)} \leq 2 \left(\ds \max_{t \in [0,T]} |\phi(t)| \right) \left(\ds \max_{t \in [0,T]} \left\| u(t) \right\|_{H^{r}(\T^N)} \right),
		\end{equation*}
		which confirms our assertion. 
		
		Finally, it follows from the identity in \eqref{Blade Runner 2049} that $\ds \lim_{k \to \infty} \phi(t) s_k(t) =  \phi(t)u(t)$ in $H^{r}(\T^N)$, for every $t \in [0, T]$. By the Dominated Convergence Theorem, we obtain
		\begin{equation*}
			\left\| \int_{0}^{T} \phi(t)\{s_k(t)-u(t)\} dt \right\|_{H^{r}(\T^N)} \leq   \int_{0}^{T} \|\phi(t) s_k(t)- \phi(t)u(t)\|_{H^{r}(\T^N)}  dt \to 0 \ \text{as} \ k \to \infty.
		\end{equation*}
		Therefore
		\begin{equation*}
			\lim_{k \to \infty} \langle s_k(t, x), \phi(t) \rangle = \lim_{k \to \infty} \int_{0}^{T} \phi(t) s_k(t, x)  dt 
			= \int_{0}^{T} \phi(t) u(t) dt = \langle u(t), \phi(t) \rangle,
		\end{equation*}
		which finalizes the proof of the theorem.   
	\end{proof}

	\begin{Obs} \label{The Pianist} 
		Since $D_t$ acts continuously in $D'((0,T); H^{r}(\T^N))$  for any $u \in C \left([0, T]; H^{r}(\T^N) \right)$  
		\begin{equation*}
			D_t u = D_t \sum_{\xi \in \Z^N} \widehat{u}(t,\xi) e^{i\xi x} = \sum_{\xi \in \Z^N} D_t \widehat{u}(t,\xi) e^{i\xi x}.
		\end{equation*}	
		That is, we are able to write the weak derivative of $u(t)$ as the limit  of 
		\begin{equation*}
			\sum_{|\xi| \leq k} D_t \widehat{u}(t,\xi) e^{i\xi x},
		\end{equation*}
		where  $D_t\widehat{u}(t,\xi) e^{i\xi x}$ denotes the following element of $D'(0,T);H^{r}(\T^N))$:
		\begin{equation*}
			\langle D_t \widehat{u}(t,\xi) e^{i\xi x}, \phi(t) \rangle = \left[\int_{0}^{T} \widehat{u}(t,\xi)(-D_t\phi(t)) dt \right] e^{i\xi x}, \ \ \forall \phi \in C^{\infty}_{c}\left((0, T) \right).
		\end{equation*}
		
	\end{Obs}
	
	\begin{Obs} \label{Munich}
		Let $u \in C \left([0, T]; H^{r}(\T^N) \right)$; for any $\alpha \in \Z_{+}^{N}$ it follows from the continuity of derivatives in Sobolev spaces that $D_{x}^{\alpha} u \in C \left([0, T]; H^{r - |\alpha|}(\T^N) \right)$. By Theorem \ref{Chinatown} we have
		\begin{align*}
			D_{x}^{\alpha} u &=  \ds \sum_{\xi \in \Z^{N}}  \widehat{D_{x}^{\alpha} u}(t, \xi) e^{i\xi x} = \ds \sum_{\xi \in \Z^{N}} \widehat{u}(t,\xi) \xi^{\alpha} e^{i\xi x}, \ \ \ \text{with convergence in} \ D'((0,T); H^{r-|\alpha|}(\T^N)).
		\end{align*}
	\end{Obs}

	\subsection{A family of ordinary differential equations} \label{Apollo 13}

We now return to the original problem; suppose that for some $ \tau \in \R$, $f \in C \left([0,T]; H^{\tau}(\T^N) \right)$ and  $g \in H^{\tau}(\T^N)$ we are able to find $u \in C \left([0,T]; H^{\rho}(\T^N) \right)$ which solves \eqref{Schindler's List}, for some $\rho \in \R$. Then 
	\begin{equation*}
		D_{t} u =  a_{2}(t) \Delta_{x}u - \ds \sum_{j = 1}^{N} a_{1, j}(t) D_{x_{j}}u - a_{0}(t)u + f(t, x).
	\end{equation*}
	If $\sigma = \min \left\{\tau, \rho - 2 \right\}$, note that the right-hand side is an element of $C \left([0, T]; H^{\sigma}(\T^N) \right)$, so the same holds for the left-hand side. Hence we have uniqueness for the partial Fourier series described in Theorem \ref{Chinatown} and Remark \ref{The Pianist} on both sides. This fact, associated to the initial condition in \eqref{Schindler's List}, implies the following identities:
	\small{
	\begin{equation} \label{Los Angeles}
		\left\{ \begin{array} {rl}
			D_{t}\widehat{u}(t, \xi) + \left[a_{2}(t) |\xi|^{2} + \ds \sum_{j = 1}^{N} a_{1, j}(t) \xi_{j} +  a_{0}(t) \right] \widehat{u}(t, \xi) &= \widehat{f}(t, \xi), \\
			\widehat{u}(0, \xi) & = \widehat{g}(\xi),
		\end{array} \right. \ \ \forall t \in [0, T], \ \forall \xi \in \Z^{N}. 
	\end{equation}
	}
	\normalsize

	The argument above shows that for any \emph{barely regular} candidate $u$ to be a solution for $\eqref{Schindler's List}$, it is \emph{necessary} to solve the family of ordinary differential equations parametrized by elements in $\Z^N$ given in \eqref{Los Angeles}. Each equation in \eqref{Los Angeles} can be solved by the method of integrating factors. In fact, if we set for any index $k$
	\begin{equation} \label{Indiana Jones and the Last Crusade}
		a_{k}(t):= b_{k}(t) + ic_{k}(t),
	\end{equation}
	with both $b_{k}$ and $c_{k}$ real-valued, and denote their respective primitives by
	\begin{equation} \label{Goodfellas}
		A_{k}(t) := B_{k}(t) + iC_{k}(t), \ \ \text{where} \ \  B_{k}(t) := \ds \int_{0}^{t} b_{k}(s) ds \ \text{and} \ C_{k}(t) := \ds \int_{0}^{t} c_{kn}(s) ds,
	\end{equation}
	it is not difficult to prove that for any fixed $\xi \in \Z^N$
	\small{
		\begin{equation} \label{Sunset Boulevard} 
			\begin{split}
				\widehat{u}(t, \xi) &= \widehat{g}(\xi) \exp \left\{-i \left[A_{2}(t) |\xi|^{2} + \ds \sum_{j = 1}^{N} A_{1, j}(t) \xi_{j} +  A_{0}(t) \right]  \right\} + \\
				&+ \ds i \int_{0}^{t} \widehat{f}(s, \xi)  \exp \left\{i \left[\left(A_{2}(s) - A_{2}(t) \right) |\xi|^{2} + \ds \sum_{j = 1}^{N} \left(A_{1, j}(s) - A_{1, j}(t) \right) \xi_{j} +\left(A_{0}(s) - A_{0}(t) \right) \right]  \right\} ds, 
			\end{split}
		\end{equation}
	}
	\normalsize
	for every $t \in [0, T]$, and such solution is \emph{unique}.  
	
	\begin{Obs} \label{Requiem for a Dream}
		Even though this process has not yielded  any solution for \eqref{Schindler's List} yet, it has already proved that there is \emph{at most} one solution for the initial-value problem. 
	\end{Obs}

	\section{Normal Form} \label{The Wrestler}
	
	Our main objective in the present section is to show that, in order to study well-posedness for the problem \eqref{Schindler's List} with $P$ given in \ref{Her}, it is equivalent to work with a much simpler operator.  Before proceeding to the first statement, we introduce the concepts of well-posedness we intend to work with from now on. 
	
		\begin{Def} 
		Let $P(t,D_{t}, D_{x})$ as in \eqref{Her} and consider the initial-value problem \eqref{Schindler's List}.
		\begin{enumerate}[leftmargin=*]
			\item Given $\delta \geq 0$ we say that \eqref{Schindler's List} is  \emph{well-posed in $H^{r}$ with loss of  $\delta$ derivatives} if for every $r \in \R$, $f \in C \left([0,T]; H^{r}(\T^N) \right)$ and $g \in H^{r}(\T^N)$, there exists a \emph{unique} $u \in C \left([0,T]; H^{r-\delta}(\T^N) \right)$ which solves \eqref{Schindler's List}. In the case where we may take $\delta = 0$, we simply say that \eqref{Schindler's List} is \emph{well-posed} in $H^{r}$.
			
			\item We say that \eqref{Schindler's List} is \emph{well-posed in $C^{\infty}$} when for any $f \in C \left([0,T]; C^{\infty}(\T^N) \right)$ and $g \in C^{\infty}(\T^N)$, there exists a \emph{unique} $u \in C \left([0,T]; C^{\infty}(\T^N) \right)$ that solves \eqref{Schindler's List}. 
			
			\item Fixed $s \geq 1$, we say that \eqref{Schindler's List} is \emph{well-posed in $G^{s}$} if for any
			$f \in C \left([0,T]; G^{s}(\T^N) \right), \ g \in G^{s}(\T^N)$, there exists a \emph{unique} $u \in C \left([0,T]; G^{s}(\T^N) \right)$ solving \eqref{Schindler's List}. 
		\end{enumerate}
	\end{Def}
	
	\begin{Obs} \label{The Descendants}
	Suppose for a moment that one has proved well-posedness for any of the cases above. By going back to the description of the operator $P$, the following facts are immediate consequences: 
	
	\begin{itemize} [leftmargin=*]
		\item If $f \in C \left([0,T]; H^{r}(\T^N) \right)$ and $u \in C \left([0,T]; H^{r-\delta}(\T^N) \right)$  solves \eqref{Schindler's List},  $u \in C^{1} \left([0,T]; H^{r-\delta -2}(\T^N) \right)$. 
		\item If $f \in C \left([0,T]; C^{\infty}(\T^N) \right)$ and $u \in C \left([0,T]; C^{\infty}(\T^N) \right)$  solves \eqref{Schindler's List},  $u \in C^{1} \left([0,T]; C^{\infty}(\T^N) \right)$.
		\item If $f \in C \left([0,T]; G^{s}(\T^N) \right)$ and $u \in C \left([0,T]; G^{s}(\T^N) \right)$  solves \eqref{Schindler's List}, $u \in C^{1} \left([0,T]; G^{s}(\T^N) \right)$.
	\end{itemize}
	\end{Obs}

	\begin{Pro} \label{Seven}
		Let  $J: [0, T] \times \Z^{N} \to \C$ be the following continuous map: 
		\begin{equation} \label{The Aviator}
			J(t, \xi) = -B_{2}(t) |\xi|^{2} - \ds \sum_{j = 1}^{N} B_{1, j}(t) \xi_{i}  - A_{0}(t),
		\end{equation}
		with $B_{2}, B_{1, 1}, \ldots, B_{1, N}$ and $A_{0}$ as in \eqref{Goodfellas}.  Consider, for any $r \in \R$, 
		\begin{equation} \label{One Million Dollar Baby}  
			\begin{split}
				\Psi: C \left([0,T]; H^{r}(\T^N) \right) &\to C \left([0,T]; H^{r-2}(\T^N) \right) \\
				u = \ds \sum_{\xi \in \Z^N} \widehat{u}(t, \xi) e^{i x \xi} &\mapsto \Psi u = \ds \ds \sum_{\xi \in \Z^N} \left(e^{i J(t,\xi)}\widehat{u}(t, \xi) \right) e^{i x \xi}.
			\end{split} 
		\end{equation}
		Then $\Psi$ is well defined and the same holds for the restrictions
		\begin{equation} \label{Sicario}
			\begin{split}
				&\Psi: C \left([0,T]; C^{\infty}(\T^{N}) \right) \to C \left([0,T]; C^{\infty}(\T^N) \right); \\
				&\Psi: C \left([0,T]; G^{s}(\T^N) \right) \to C \left([0,T]; G^{s}(\T^N) \right), \ \forall s \geq 1.
			\end{split}
		\end{equation}
		Furthermore the maps in \eqref{Sicario} are bijective.
	\end{Pro}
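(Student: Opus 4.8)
The plan is to exploit the following feature of the multiplier: although $J(t,\xi)$ grows quadratically in $\xi$, the \emph{modulus} of $e^{iJ(t,\xi)}$ does not depend on $\xi$ at all. Writing $a_{0} = b_{0} + ic_{0}$ and $A_{0} = B_{0} + iC_{0}$ as in \eqref{Indiana Jones and the Last Crusade}--\eqref{Goodfellas}, and recalling that $B_{2}, B_{1,1}, \dots, B_{1,N}$ are real valued, formula \eqref{The Aviator} gives $\Re\big(iJ(t,\xi)\big) = C_{0}(t)$, hence $|e^{iJ(t,\xi)}| = e^{C_{0}(t)}$ for every $(t,\xi) \in [0,T]\times\Z^{N}$. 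Setting $M := \exp\big(\max_{[0,T]}|C_{0}|\big) \ge 1$ one obtains the uniform two-sided bound $M^{-1} \le |e^{\pm iJ(t,\xi)}| \le M$, which is the only property of the multiplier I will use. With it, well-definedness of $\Psi$ on $C([0,T];H^{r}(\T^{N}))$ is easy: for fixed $t$, $\|\Psi u(t)\|_{H^{r-2}(\T^N)} \le \|\Psi u(t)\|_{H^{r}(\T^N)} \le M\|u(t)\|_{H^{r}(\T^N)}$, so the defining series converges in $H^{r-2}(\T^N)$; for continuity in $t$ I would fix $t_{0}$, write the $\xi$-th Fourier coefficient of $\Psi u(t) - \Psi u(t_{0})$ as $e^{iJ(t,\xi)}\big(\widehat u(t,\xi) - \widehat u(t_{0},\xi)\big) + \big(e^{iJ(t,\xi)} - e^{iJ(t_{0},\xi)}\big)\widehat u(t_{0},\xi)$, and bound $\|\Psi u(t) - \Psi u(t_{0})\|_{H^{r-2}(\T^N)}^{2}$ by $2M^{2}\|u(t) - u(t_{0})\|_{H^{r}(\T^N)}^{2}$ plus $2\sum_{\xi}|e^{iJ(t,\xi)} - e^{iJ(t_{0},\xi)}|^{2}|\widehat u(t_{0},\xi)|^{2}(1+|\xi|)^{2(r-2)}$. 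The first term vanishes as $t \to t_{0}$ by continuity of $u$ into $H^{r}(\T^N)$, and the second by dominated convergence, since its summands are dominated by the $t$-independent summable majorant $4M^{2}|\widehat u(t_{0},\xi)|^{2}(1+|\xi|)^{2(r-2)}$ and converge to $0$ pointwise in $\xi$ because $t \mapsto J(t,\xi)$ is continuous.

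Next I would deduce the two restrictions in \eqref{Sicario}. In the smooth case, Theorem \ref{The Godfather II} identifies $C([0,T];C^{\infty}(\T^{N}))$ with $\bigcap_{\rho} C([0,T];H^{\rho}(\T^{N}))$, so it suffices to apply the previous paragraph with $\rho = r+2$ for every $r$. In the Gevrey case, Theorem \ref{On the Waterfront} yields $h > 0$ with $u([0,T])$ bounded in $G^{s}_{h}(\T^{N})$, say $\sup_{t}\|u(t)\|_{h,s} = R < \infty$; Lemma \ref{The People vs. Larry Flynt} then gives $|\widehat u(t,\xi)| \le 2^{s}R\,e^{-\delta|\xi|^{1/s}}$ uniformly in $(t,\xi)$, with $\delta = \lambda(N,s)/h^{1/s}$, multiplying by $e^{iJ(t,\xi)}$ only replaces $R$ by $MR$, and Lemma \ref{Boyhood} upgrades this to $\Psi u(t) \in G^{s}_{h'}(\T^{N})$ with $h' = (2s/\delta)^{s}$ and $\sup_{t}\|\Psi u(t)\|_{h',s} < \infty$. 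To turn this uniform bound into continuity into $G^{s}(\T^{N})$, I would fix $h'' > h'$ and use the compactness of the inclusion $G^{s}_{h'}(\T^{N}) \hookrightarrow G^{s}_{h''}(\T^{N})$: then $\Psi u([0,T])$ is relatively compact in $G^{s}_{h''}(\T^{N})$, so for any $t_{n} \to t_{0}$ every subsequence of $\big(\Psi u(t_{n})\big)$ has a further subsequence converging in $G^{s}_{h''}(\T^{N})$; its limit is also a limit in $\D'(\T^{N})$, hence equals $\Psi u(t_{0})$ by the first paragraph, so $\Psi u(t_{n}) \to \Psi u(t_{0})$ in $G^{s}_{h''}(\T^{N})$. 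Thus $\Psi u\colon[0,T]\to G^{s}_{h''}(\T^{N})$ is continuous and $\Psi u \in C([0,T];G^{s}(\T^{N}))$ by Theorem \ref{On the Waterfront}.

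For bijectivity of the maps in \eqref{Sicario}, the natural inverse is the operator $\Phi$ defined exactly like $\Psi$ in \eqref{One Million Dollar Baby}, but with the multiplier $e^{-iJ(t,\xi)}$. Since $|e^{-iJ(t,\xi)}| = e^{-C_{0}(t)}$ obeys the same uniform two-sided bound, the arguments of the previous two paragraphs apply verbatim to $\Phi$, so $\Phi$ is well defined on $C([0,T];C^{\infty}(\T^{N}))$ and on $C([0,T];G^{s}(\T^{N}))$. Because $e^{iJ(t,\xi)}e^{-iJ(t,\xi)} = 1$, the Fourier coefficients of $\Psi(\Phi u)$ and of $\Phi(\Psi u)$ coincide with those of $u$, and uniqueness of the Fourier series in these spaces gives $\Psi \circ \Phi = \Phi \circ \Psi = \mathrm{id}$; hence both maps in \eqref{Sicario} are bijections, with inverse $\Phi$.

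The genuinely delicate point in this scheme is the continuity of $\Psi u$ in $t$ with respect to the strong topologies: each individual mode $t \mapsto e^{iJ(t,\xi)}\widehat u(t,\xi)$ is trivially continuous, but no quantitative modulus-of-continuity estimate for the series is available, since $\partial_{t}e^{iJ(t,\xi)}$ grows like $|\xi|^{2}$. In the Sobolev scale this is circumvented qualitatively by dominated convergence (first paragraph), and in the Gevrey scale by trading the missing quantitative control for the compactness of the Gevrey inclusions together with uniqueness of the $\D'$-limit (second paragraph); both arguments lean heavily on the clean Fourier-series description of all the spaces involved.
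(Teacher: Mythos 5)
Your proof is correct, but at the two continuity steps it takes a genuinely different route from the paper's, trading quantitative estimates for soft (compactness and dominated-convergence) arguments. In the Sobolev case the paper applies a mean-value inequality to $t\mapsto e^{iJ(t,\xi)}$ to get $\left|e^{iJ(t,\xi)}-e^{iJ(t_0,\xi)}\right|\leq C(1+|\xi|)^2|t-t_0|$, which costs exactly the two derivatives in the statement but delivers a Lipschitz-type modulus $\left\|\Psi u(t)-\Psi u(t_0)\right\|_{H^{r-2}}\leq C\left(\left\|u(t)-u(t_0)\right\|_{H^r}+|t-t_0|\max_\tau\left\|u(\tau)\right\|_{H^r}\right)$; you instead observe that the summands in the second term of the decomposition vanish pointwise in $\xi$ as $t\to t_0$ and are uniformly dominated, and invoke dominated convergence — this gives no rate, but in fact proves continuity into $H^r$ itself, showing that $\Psi$ loses no derivatives at all (the statement's $H^{r-2}$ target is not sharp). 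Similarly, in the Gevrey case the paper propagates the explicit multiplier estimate through Lemmas \ref{The People vs. Larry Flynt} and \ref{Boyhood} to produce a bound on $\left|\widehat{\Psi u}(t,\xi)-\widehat{\Psi u}(t_0,\xi)\right|$ in terms of $\left\|u(t)-u(t_0)\right\|_{h,s}$ and $|t-t_0|$, whereas you use the uniform boundedness of $\Psi u([0,T])$ in a fixed $G^s_{h'}(\T^N)$, the compactness of the inclusion $G^s_{h'}(\T^N)\hookrightarrow G^s_{h''}(\T^N)$, and uniqueness of the $\D'(\T^N)$-limit (inherited from the Sobolev continuity) to conclude convergence in $G^s_{h''}(\T^N)$. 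Both routes are valid; the paper's quantitative approach yields reusable moduli of continuity, while your softer arguments are shorter and, as noted, reveal the sharper Sobolev mapping property. The bijectivity step via $\Phi$ with multiplier $e^{-iJ(t,\xi)}$ is identical in the two proofs.

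One small imprecision to flag: you write that no quantitative modulus of continuity is available because $\partial_t e^{iJ(t,\xi)}$ grows like $|\xi|^2$; the paper's proof shows that one \emph{is} available, at the price of measuring the difference in $H^{r-2}$ rather than $H^r$. This does not affect the correctness of your argument, only the framing of the remark.
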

	\begin{proof}
	We  prove first that \eqref{One Million Dollar Baby} is well defined.  Since $B_{2}$ and each $B_{1, j}$ is real, whilst $A_{0}$ is uniformly bounded, there exists a constant $C > 0$ such that 
		\begin{equation} \label{Incendies} 
			\left|e^{i J(t,\xi)}\widehat{u}(t, \xi) \right| \leq C \left|\widehat{u}(t, \xi) \right|, \ \ \forall t \in [0,T], \ \forall \xi \in \Z.
		\end{equation}
		Thus $\Psi u (t, x) \in H^{r}(\T^N)$ for any fixed $t \in [0, T]$ and $\widehat{\Psi u}(t, \xi) = e^{i J(t,\xi)}\widehat{u}(t, \xi)$, for every $ \xi \in \Z^N. $ It remains to prove the continuity in $H^{r-2}(\T^N)$; given $t, t_{0} \in [0, T]$, 
		\small{
			\begin{equation}  \label{Arrival}
			\begin{split}
				\left\|\Psi u(t) - \Psi u(t_{0})   \right\|_{H^{r-2}(\T^N)}^{2} &= \ds \sum_{\xi \in \Z^N} \left|e^{i J(t,\xi)}\widehat{u}(t, \xi) - e^{i J(t_{0},\xi)}\widehat{u}(t_{0}, \xi) \right|^{2} (1+|\xi|)^{2(r-2)} \\
				&\leq 2C^{2} \left\|u(t) - u(t_{0})   \right\|_{H^{r-2}(\T^N)}^{2} + 2\ds \sum_{\xi \in \Z^N} \left|e^{i J(t,\xi)} -  e^{i J(t_{0},\xi)} \right|^{2} |\widehat{u}(t_{0}, \xi)|^{2} (1+|\xi|)^{2(r-2)}. 
			\end{split}
			\end{equation}
		}
		\normalsize
		Next we apply mean value inequality to estimate the right-hand side of the expression above:  
		\small{
		\begin{align}
			\left|e^{i J(t, \xi)} - e^{i J(t_0, \xi)} \right| &\leq |t - t_{0}|  \sup_{\tau \in [t_{0}, t] } \left|- b_{2}(\tau) |\xi|^{2} -\ds \sum_{j = 1}^{N} b_{1, j}(\tau) \xi_{j} - a_{0}(\tau) \right| \sup_{\tau \in [t_{0}, t] } \left|e^{i J(\tau, \xi)} \right| \nonumber  \\
			&\leq C_{2} (1+ |\xi|)^{2} |t - t_{0}|, \label{Prisoners} 
		\end{align}
		}
		\normalsize
		for some  constant $C_{2}$ which can be taken  \emph{independently} of $t, t_{0}$ e $\xi$. By associating \eqref{Arrival} to \eqref{Prisoners}, we infer that 
		\begin{align*}
			\left\|\Psi u(t) - \Psi u(t_{0})   \right\|_{H^{r-2}(\T^N)}^{2} &\leq C_{3} \left(\left\|u(t) - u(t_{0})   \right\|_{H^{r}(\T^N)}^{2} + |t-t_{0}|^{2} \max_{\tau \in [0, T]} \left\|u (\tau) \right\|_{H^{r}(\T^N)}^{2} \right),
		\end{align*}
		for some $C_{3}$ that does not depend on $t$ nor $t_{0}$, which proves our first statement. The fact that $\Psi$ take elements of $C \left([0,T]; C^{\infty}(\T^N) \right)$ into itself is a direct consequence from the statement we have just proved and Theorem \ref{The Godfather II}.
		
		We proceed to the \emph{Gevrey} case; if $u \in C([0, T]; G^{s}(\T^{N}))$ by Theorem \ref{On the Waterfront} there exists  $h > 0$ such that $u: [0,T] \to G^{s}_{h}(\T^N) \ \text{is continuous}.$ Hence we find (by Lemma \ref{The People vs. Larry Flynt})  $\lambda (N, s) > 0$ such that
		\begin{align}
			|\widehat{u}(t, \xi)|  &\leq  2^{s} \left\|u(t) \right\|_{h, s} e^{- \frac{\lambda}{h^{1/s}} |\xi|^{1/s}} \leq  2^{s} \ds \max_{\tau \in [0, T]} \left(\left\|u(\tau) \right\|_{h, s} \right) e^{- \frac{\lambda}{h^{1/s}} |\xi|^{1/s}}, \ \ \ \forall \xi \in \Z^{N}, \ \forall t \in [0, T]. \label{Do the Right Thing} 
		\end{align}
		It follows from \eqref{Incendies} and \eqref{Do the Right Thing} that for each fixed $t \in [0, T]$ we have $\Psi u (t) \in G^{s}(\T^N)$ and
		\begin{equation*}
			\left| \widehat{\Psi u}(t, \xi) \right| \leq  C 2^{s}  \ds \max_{\tau \in [0, T]} \left(\left\|u(\tau) \right\|_{h, s} \right) e^{- \frac{\lambda}{h^{1/s}} |\xi|^{1/s}}, \ \ \ \forall \xi \in \Z^{N}, \ \forall t \in [0, T].
		\end{equation*}
		We deduce from Lemma \ref{Boyhood} that 
		\begin{equation*}
			\Psi u(t) \in G^{s}_{\kappa}(\T^N), \ \ \ \forall t \in [0, T], \ \ \text{where $\kappa = (2s/\lambda)^{s} h > 0$ \emph{does not depend on} $t$}. 
		\end{equation*}
		
		It remains to prove continuity: applying \eqref{Incendies}, \eqref{Prisoners} and \eqref{Do the Right Thing} we obtain
		\small{
		\begin{align}
			\left|\widehat{\Psi u}(t, \xi) - \widehat{\Psi u}(t_{0}, \xi)  \right| 
			&\leq  C \left|\widehat{u}(t, \xi) - \widehat{u}(t_{0}, \xi)  \right| +  C_{2}  2^{s} \ds \max_{\tau \in [0, T]} \left(\left\|u(\tau) \right\|_{h, s} \right) |t - t_{0}| (1+ |\xi|)^{2}  e^{- \frac{\lambda}{h^{1/s}} |\xi|^{1/s}} \label{There Will be Blood}.
		\end{align}
		}
		\normalsize
		On the other hand, it follows from Lemma \ref{The People vs. Larry Flynt} that
		\begin{equation} \label{Magnolia} 
			|\widehat{u}(t, \xi) - \widehat{u}(t_{0}, \xi)|  \leq  2^{s} \left\|u(t) - u(t_0) \right\|_{h, s} e^{- \frac{\lambda}{h^{1/s}} |\xi|^{1/s}}, \ \ \forall \xi \in \Z^N. 
		\end{equation}
		Finally, recall that there exists $C_{4}> 0$ such that 
		\begin{equation} \label{Boogie Nights} 
			(1+|\xi|^{2}) \leq C_{4} e^{ \frac{\lambda}{2h^{1/s}} |\xi|^{1/s}}, \ \ \forall  \xi \in \Z^N. 
		\end{equation}  
		Putting \eqref{Magnolia} and \eqref{Boogie Nights} into \eqref{There Will be Blood}, we deduce the existence of $M > 0$ such that
		\begin{equation*}
			\left|\widehat{\Psi u}(t, \xi) - \widehat{\Psi u}(t_{0}, \xi)  \right| \leq M \left(\left\|u(t) - u(t_0) \right\|_{h, s}  + |t - t_{0}| \right) e^{- \frac{\lambda}{2 h^{1/s}} |\xi|^{1/s}}, \ \ \forall  \xi \in \Z^N.
		\end{equation*}
		Using Lemma \ref{Boyhood}, one deduces that $\Psi u(t) \to  \Psi u(t_0)$ in $G^{s}(\T^N)$ when $t \to t_{0}$, as we intended to prove. 
		
		In order to check  that $\Psi$ is bijective in both smooth and Gevrey cases, define 
		\begin{equation} \label{Good Night, and Good Luck}   
			\begin{split}
				\Phi: C \left([0,T]; H^{r}(\T^N) \right) &\to C \left([0,T]; H^{r-2}(\T^N) \right) \\
				u = \ds \sum_{\xi \in \Z^N} \widehat{u}(t, \xi) e^{i x \xi} &\mapsto \Psi u = \ds \ds \sum_{\xi \in \Z^N} \left(e^{-i J(t,\xi)}\widehat{u}(t, \xi) \right) e^{i x \xi}.
			\end{split} 
		\end{equation}
		Then properties stated and proved for $\Psi$ also hold for $\Phi$. Moreover it follows immediately that $\Phi \circ \Psi = \Psi \circ \Phi = I$, which finalizes the proof. 
	\end{proof}

	\begin{Teo} \label{Cries and Whispers} 
		Let $P(t, D_{x}, D_{t})$ be the operator described  in \eqref{Her} and set
		\begin{equation} \label{Raiders of the Lost Ark} 
			\widetilde{P}(t, x, D_{x}, D_{t}) = D_{t} - ic_{2}(t) \Delta_{x}^{2} + \ds \sum_{j = 1}^{N} i c_{1, j}(t) D_{x_{j}},
		\end{equation}
		with $c_{2}$ and $c_{1, 1}, \ldots, c_{1, N}$ defined in \eqref{Indiana Jones and the Last Crusade}. Then \eqref{Schindler's List} is well-posed in $ C^{\infty}$ or $G^{s}$, for any $s \geq 1$, if and only if the same holds true for
		\begin{equation} \label{Wild Strawberries} 
			\left\{ \begin{array} {rl}
				\widetilde{P}u(t,x) &= \alpha(t,x), \\
				u(0, x) & = \beta(x), 
			\end{array} \right. \ \ \forall t \in [0, T], \ \forall x \in \T^N.
		\end{equation}
		Moreover, if there exists $\delta \geq 0$ such that \eqref{Schindler's List} is well-posed in $H^{r}$ with loss of $\delta$ derivatives, then \eqref{Wild Strawberries} is well-posed in $H^{r}$ with loss of  $(4+ \delta)$ derivatives. 
	\end{Teo}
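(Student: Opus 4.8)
The plan is to realize $\widetilde{P}$ as the conjugate of $P$ by the operators $\Psi$ and $\Phi$ of Proposition \ref{Seven}, which turns the whole statement into an elementary identity between the families of ODEs \eqref{Los Angeles}. First I would record that $\widehat{\Phi u}(t,\xi)=e^{-iJ(t,\xi)}\widehat{u}(t,\xi)$, that $J(0,\cdot)=0$, and — by \eqref{The Aviator}, \eqref{Goodfellas} and the notation \eqref{Indiana Jones and the Last Crusade} — that
\begin{equation*}
\del_t J(t,\xi) = -b_2(t)|\xi|^2 - \ds\sum_{j=1}^{N} b_{1,j}(t)\xi_j - a_0(t)
\end{equation*}
(note that the \emph{whole} of $a_0$ appears, since $J$ involves $A_0$ rather than $B_0$). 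Inserting this into $D_t\bigl(e^{-iJ}\widehat u\bigr) = -(\del_t J)e^{-iJ}\widehat u + e^{-iJ}D_t\widehat u$ and using the first line of \eqref{Los Angeles}, a one-line computation gives
\begin{equation*}
D_t\widehat{\Phi u}(t,\xi) + i\Bigl[c_2(t)|\xi|^2 + \ds\sum_{j=1}^{N} c_{1,j}(t)\xi_j\Bigr]\widehat{\Phi u}(t,\xi) = \widehat{\Phi f}(t,\xi),
\end{equation*}
which is exactly the system \eqref{Los Angeles} associated with $\widetilde{P}$: conjugation by $e^{-iJ}$ erases the real parts of $a_2$ and of each $a_{1,j}$ and the entire coefficient $a_0$. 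Since Fourier series represent elements of $D'((0,T);H^\rho(\T^N))$ uniquely (Theorem \ref{Chinatown}) and $J(0,\cdot)=0$, this yields, for every $u\in C([0,T];H^\rho(\T^N))$ and $f:=Pu$,
\begin{equation*}
Pu=f,\ u(0)=g \iff \widetilde{P}(\Phi u)=\Phi f,\ (\Phi u)(0)=g;
\end{equation*}
equivalently, since $\Phi\circ\Psi=\Psi\circ\Phi=I$, $\widetilde{P}v=\alpha$, $v(0)=\beta$ if and only if $P(\Psi v)=\Psi\alpha$, $(\Psi v)(0)=\beta$. Uniqueness for the $\widetilde{P}$-problem is automatic, because $\widetilde{P}$ has the form \eqref{Her}, so the integrating-factor formula of Subsection \ref{Apollo 13} and the uniqueness in Remark \ref{Requiem for a Dream} apply verbatim; hence only existence with the correct regularity has to be produced.

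For the $C^\infty$ and $G^s$ cases, write $X$ for $C^\infty(\T^N)$ or $G^s(\T^N)$ and suppose \eqref{Schindler's List} is well-posed in $X$. Given $\alpha\in C([0,T];X)$ and $\beta\in X$, Proposition \ref{Seven} gives $\Psi\alpha\in C([0,T];X)$, so well-posedness of $P$ produces a unique $u\in C([0,T];X)$ with $Pu=\Psi\alpha$, $u(0)=\beta$; then $v:=\Phi u\in C([0,T];X)$ (Proposition \ref{Seven} again) solves $\widetilde{P}v=\Phi\Psi\alpha=\alpha$, $v(0)=u(0)=\beta$, so \eqref{Wild Strawberries} is well-posed in $X$. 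The converse is the mirror image (exchange $P\leftrightarrow\widetilde{P}$ and $\Psi\leftrightarrow\Phi$, which are mutually inverse bijections of $C([0,T];X)$ by Proposition \ref{Seven}): from $f\in C([0,T];X)$, $g\in X$ one solves $\widetilde{P}v=\Phi f$, $v(0)=g$ and sets $u:=\Psi v$, so that $Pu=\Psi\Phi f=f$, $u(0)=g$.

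For the Sobolev statement the only new ingredient is the bookkeeping of the loss, since on the Sobolev scale both $\Psi$ and $\Phi$ lower the order by $2$. Assume \eqref{Schindler's List} is well-posed in $H^r$ with loss of $\delta$ derivatives, fix $r\in\R$, and take $\alpha\in C([0,T];H^r(\T^N))$, $\beta\in H^r(\T^N)$. Then $\Psi\alpha\in C([0,T];H^{r-2}(\T^N))$ and $\beta\in H^r(\T^N)\subset H^{r-2}(\T^N)$, so the hypothesis, applied with $r$ replaced by $r-2$, yields a unique $u\in C([0,T];H^{r-2-\delta}(\T^N))$ with $Pu=\Psi\alpha$, $u(0)=\beta$. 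Finally $v:=\Phi u\in C([0,T];H^{r-4-\delta}(\T^N))$ solves $\widetilde{P}v=\Phi\Psi\alpha=\alpha$, $v(0)=\beta$, and is the unique solution in any $C([0,T];H^\rho(\T^N))$ by Remark \ref{Requiem for a Dream} applied to $\widetilde{P}$. Hence \eqref{Wild Strawberries} is well-posed in $H^r$ with loss of $(4+\delta)$ derivatives.

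The only genuinely delicate point is the first step: checking that conjugation by $e^{\pm iJ}$ sends the symbol $a_2(t)|\xi|^2+\sum_j a_{1,j}(t)\xi_j+a_0(t)$ to $i\bigl(c_2(t)|\xi|^2+\sum_j c_{1,j}(t)\xi_j\bigr)$ — so that precisely the real parts of $a_2$, of each $a_{1,j}$, and all of $a_0$ are removed — and making sure this Fourier-side computation genuinely transports the equation $Pu=f$, understood distributionally as in Subsection \ref{Apollo 13}, into $\widetilde{P}(\Phi u)=\Phi f$. Everything else is the quantifier- and derivative-tracking displayed above.
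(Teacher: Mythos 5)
Your proof is correct and follows essentially the same route as the paper's: conjugation by $\Psi$ and $\Phi=\Psi^{-1}$ from Proposition~\ref{Seven}, the verification that $e^{-iJ}$ removes the real parts of $a_2$, $a_{1,j}$ and all of $a_0$ at the level of the Fourier--coefficient ODEs \eqref{Los Angeles}, the observation that $J(0,\cdot)=0$ preserves the initial datum, uniqueness via Remark~\ref{Requiem for a Dream}, and the regularity bookkeeping $r\mapsto r-2\mapsto r-2-\delta\mapsto r-4-\delta$ in the Sobolev scale. The only cosmetic difference is that you carry out the conjugation identity on the Fourier side while the paper expands the series directly, and you spell out both implications for $C^\infty$ and $G^s$ where the paper invokes symmetry and shows only the Sobolev direction explicitly.
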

	
	\begin{proof}
	Since all the results are proved similarly (using that $\Psi$ defined in \eqref{Sicario} is invertible), we restrict ourselves to the Sobolev case. 	Suppose \eqref{Schindler's List} is well-posed in $H^{r}$ with loss of $\delta$ derivatives and fix $\alpha(t,x) \in C \left([0, T]; H^{r}(\T^N) \right)$,  $\beta(x) \in H^{r}(\T^N)$. Let $\Psi$ as in Proposition \ref{Seven}; then $\Psi (\alpha) \in  C \left([0, T]; H^{r-2}(\T^N) \right)$  and there exists (by hypothesis) $v \in C \left([0, T]; H^{r- 2 - \delta}(\T^N) \right)$ such that 
		\begin{equation*} 
			\left\{\begin{array} {rl}
				Pv(t,x) &= \Psi (\alpha)(t,x), \\
				v(0, x) & = \beta(x), 
			\end{array} \right. \ \ \forall t \in [0, T], \ \forall x \in \T^{N}.
		\end{equation*}
		Thus $\alpha = \Psi^{-1} (Pv) \in C \left([0, T]; H^{r}(\T^N) \right)$ and we can write
		\small{
		\begin{equation*}
			\alpha = \Psi^{-1} (Pv) = \ds \sum_{\xi \in \Z^{N}} e^{- i J(t,\xi)} \left[ D_{t}\widehat{v}(t, \xi) + \left(a_{2}(t) |\xi|^{2} + \ds \sum_{j = 1}^{N} a_{1, j}(t) \xi_{j} +  a_{0}(t) \right) \widehat{v}(t, \xi) \right] e^{ix\xi}.
		\end{equation*}  
		}
		\normalsize
		On the other hand, $\Psi^{-1} v \in  C \left([0, T]; H^{r- 4 - \delta}(\T^N) \right)$.  By Remarks \ref{The Pianist} and \ref{Munich}, we are able to write
		\small{
			\begin{align*}
				\widetilde{P} (\Psi^{-1} v) &=\ds \sum_{\xi \in \Z^{N}} \widetilde{P} \left[ \left(e^{- i J(t,\xi)}\widehat{v}(t, \xi) \right) e^{i x \xi} \right] \\
				&= \ds \sum_{\xi \in \Z^{N}} e^{- i J(t,\xi)} \left[D_{t}\widehat{v}(t, \xi) + \left(- \ds \frac{\partial J}{\partial t}(t,\xi) + ic_{2}(t) |\xi|^{2} + \ds \sum_{j = 1}^{N} i c_{1, j}(t) \xi_{j} \right) \widehat{v}(t, \xi)  \right] e^{i x \xi} \\
				&= \ds \sum_{\xi \in \Z^{N}} e^{- i J(t,\xi)} \left[D_{t}\widehat{v}(t, \xi) + \left( (b_{2}(t) + ic_{2}(t)) |\xi|^{2} + \ds \sum_{j = 1}^{N} (b_{1, j}(t) + i c_{1, j}(t)) \xi_{i}  + a_{0}(t)  \right) \widehat{v}(t, \xi)  \right] e^{i x \xi}\\
				&=  \ds \sum_{\xi \in \Z^{N}} e^{- i J(t,\xi)} \left[ D_{t}\widehat{v}(t, \xi) + \left(a_{2}(t) |\xi|^{2} + \ds \sum_{j = 1}^{N} a_{1, j}(t) \xi_{j} +  a_{0}(t) \right) \widehat{v}(t, \xi) \right] e^{ix\xi} = \alpha.  
		\end{align*}}
		\normalsize 
		
		Therefore, by setting $u := \Psi^{-1} v \in C \left([0, T]; H^{r- 4 - \delta}(\T^N) \right)$, we deduce that $\widetilde{P} u = \alpha$. Furthermore it follows from expressions of $J$ and $\Psi$ (see \eqref{The Aviator} and \eqref{Good Night, and Good Luck} respectively) that $J(0, \xi) = 0$ for every $\xi \in \Z^{N}$, which implies that $u(0,x) = v(0,x) = \beta(x)$ for any $x \in \T^N$, and $u$ is a solution for \eqref{Wild Strawberries}. The well-posedness of the same problem is a consequence of Remark \ref{Requiem for a Dream}.

	\end{proof}
	
\section{A characterization of well-posedness through Fourier coefficients and its consequences} \label{Walk the Line} 

We begin the section by showing that, in order to obtain well-posedness for \eqref{Schindler's List}, it is sufficient to exhibit a priori inequalities which are reminiscent of \emph{energy estimates} for the Fourier coefficients \eqref{Sunset Boulevard} of the (only) candidate $u$. Next  we combine the result with Theorem \ref{Cries and Whispers} to obtain relations between well-posedness in different frameworks. Finally, we show that well-posedness in Sobolev spaces  can be reduced to well-posedness in $L^{2}$.

\begin{Lem} \label{American Gangster}  
	For arbitrary $t \in [0, T]$ and $\xi \in \Z^N$, consider $\widehat{u}(t, \xi)$ given by formula \eqref{Sunset Boulevard}. 
	\begin{enumerate}[label=\Roman*),  wide,  labelindent=0pt]
		\item Let $r \in \R$,  $f \in C\left([0,T]; H^{r}(\T^N)\right)$ and $g \in H^{r}(\T^N)$. If there exist constants  $C > 0$, $\rho \geq 0$ such that 
		\small{
		\begin{equation} \label{Zero Dark Thirty} 
			|\widehat{u}(t, \xi)| \leq C (1+|\xi|)^{\rho} \left[|\widehat{g}(\xi)|  + \left( \ds  \int_{0}^{T} |\widehat{f}(s, \xi)|^{2} ds \right)^{1/2} \right], \ \forall t \in [0,T], \ \forall \xi \in \Z^{N},
		\end{equation}
		}
		\normalsize
		then for each fixed $t \in [0, T]$ we have $u = \ds \sum_{\xi \in \Z^N} \widehat{u}(t, \xi) e^{i x \xi}$ in  $H^{r - \rho}(\T^N)$ and $u \in C\left([0, T]; H^{r - 2 - \rho}(\T^N) \right)$. 
		\item  Suppose $f \in C \left([0, T]; G^{s}(\T^N) \right)$ and $g \in G^{s}(\T^N)$, for some $s \geq 1$. In case there exist $C, \delta > 0$ such that 
		\begin{equation} \label{The Hurt Locker}
			|\widehat{u}(t, \xi)| \leq C e^{- \delta |\xi|^{1/s}}, \ \forall t \in [0, T], \ \forall \xi \in \Z^{N},	
		\end{equation}	
		it follows that $u = \ds \sum_{\xi \in \Z^N} \widehat{u}(t, \xi) e^{i x \xi}$ belongs to $C\left([0, T]; G^{s}(\T^N) \right)$.
	\end{enumerate}
\end{Lem}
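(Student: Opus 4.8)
\emph{Proof proposal.} The plan is to transfer both statements to the Fourier side and to use the \emph{a priori} bounds \eqref{Zero Dark Thirty} and \eqref{The Hurt Locker} as summable majorants. Throughout, $u(t)$ denotes the function whose Fourier coefficients are the $\widehat{u}(t,\xi)$ produced by \eqref{Sunset Boulevard}, and I first record that for each fixed $\xi$ the map $t\mapsto\widehat{u}(t,\xi)$ is continuous: by \eqref{Sunset Boulevard} it equals $\widehat{g}(\xi)$ times the exponential of a $C^{1}$ function of $t$ plus an integral $i\int_{0}^{t}(\cdots)\,ds$ whose integrand is jointly continuous in $(s,t)$ (the primitives $A_{k}$ and each $\widehat{f}(\cdot,\xi)$ being continuous).

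For part I, I would fix $t\in[0,T]$ and use $(a+b)^{2}\le 2a^{2}+2b^{2}$ together with Tonelli's theorem (the summands are non-negative, so $\sum_{\xi}$ and $\int_{0}^{T}$ commute) to obtain
\[
\|u(t)\|_{H^{r-\rho}(\T^{N})}^{2}=\sum_{\xi\in\Z^{N}}|\widehat{u}(t,\xi)|^{2}(1+|\xi|)^{2(r-\rho)}\le 2C^{2}\Bigl[\|g\|_{H^{r}(\T^{N})}^{2}+\int_{0}^{T}\|f(s)\|_{H^{r}(\T^{N})}^{2}\,ds\Bigr],
\]
which is finite since $f\in C([0,T];H^{r}(\T^{N}))$; the same estimate applied to tails shows the partial sums of $\sum_{\xi}\widehat{u}(t,\xi)e^{ix\xi}$ are Cauchy in $H^{r-\rho}(\T^{N})$, hence converge there to $u(t)$. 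For the continuity claim I would fix $t_{0}$ and observe that
\[
|\widehat{u}(t,\xi)-\widehat{u}(t_{0},\xi)|^{2}(1+|\xi|)^{2(r-2-\rho)}\le 8C^{2}(1+|\xi|)^{2r}\Bigl[|\widehat{g}(\xi)|^{2}+\int_{0}^{T}|\widehat{f}(s,\xi)|^{2}\,ds\Bigr],
\]
the right-hand side being summable over $\xi$ and independent of $t$; since each left-hand term tends to $0$ as $t\to t_{0}$ by the continuity of $t\mapsto\widehat{u}(t,\xi)$, the dominated convergence theorem for series yields $\|u(t)-u(t_{0})\|_{H^{r-2-\rho}(\T^{N})}\to 0$.

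For part II, I would treat the fixed-$t$ membership and the continuity separately. For fixed $t$, \eqref{The Hurt Locker} and Proposition \ref{Dr. Strangelove} give $u(t)\in G^{s}(\T^{N})$ at once. For continuity, fix $t_{0}\in[0,T]$, choose any $\delta'\in(0,\delta)$, and set $h'=(2s/\delta')^{s}$. The crucial step is to prove
\[
\varepsilon(t):=\sup_{\xi\in\Z^{N}}|\widehat{u}(t,\xi)-\widehat{u}(t_{0},\xi)|\,e^{\delta'|\xi|^{1/s}}\longrightarrow 0\qquad\text{as }t\to t_{0},
\]
which I would do by splitting the supremum: for $|\xi|>R$ one has $|\widehat{u}(t,\xi)-\widehat{u}(t_{0},\xi)|e^{\delta'|\xi|^{1/s}}\le 2Ce^{-(\delta-\delta')|\xi|^{1/s}}$, which is small uniformly in $t$ once $R$ is large, while the finitely many frequencies $|\xi|\le R$ are handled by the continuity of each $t\mapsto\widehat{u}(t,\xi)$. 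Then Lemma \ref{Boyhood} applied to $v=u(t)-u(t_{0})\in G^{s}(\T^{N})$, whose coefficients are bounded by $\varepsilon(t)e^{-\delta'|\xi|^{1/s}}$, gives $u(t)-u(t_{0})\in G^{s}_{h'}(\T^{N})$ with $\|u(t)-u(t_{0})\|_{h',s}\le\varepsilon(t)\,C_{2}(\delta',N,s)\to 0$; and Lemma \ref{Boyhood} applied with the original $\delta$ shows $u([0,T])\subset G^{s}_{(2s/\delta)^{s}}(\T^{N})\subset G^{s}_{h'}(\T^{N})$. Thus $u:[0,T]\to G^{s}_{h'}(\T^{N})$ is continuous with $u([0,T])\subset G^{s}_{h'}(\T^{N})$, and Theorem \ref{On the Waterfront} gives $u\in C([0,T];G^{s}(\T^{N}))$.

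I expect the main obstacle to be precisely the continuity statement in part II: the hypothesis \eqref{The Hurt Locker} only yields pointwise-in-$\xi$ continuity of the Fourier coefficients, not a uniform modulus of continuity over the whole lattice, so one cannot directly estimate a $G^{s}_{h}$-norm of $u(t)-u(t_{0})$. The remedy is to sacrifice a sliver of exponential decay ($\delta$ replaced by $\delta'$, equivalently passing to a slightly coarser Gevrey scale $h'>h$), which buys a uniform-in-$t$ tail bound, and then to absorb the remaining finitely many frequencies by hand. Part I is comparatively routine, the only points of care being the Tonelli interchange of $\sum_{\xi}$ with $\int_{0}^{T}$ and keeping track of which Sobolev index each inequality is stated in.
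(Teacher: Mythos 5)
Your proposal is correct, but it takes a genuinely different route for the continuity-in-$t$ assertions, and this difference is worth noting. The paper's proof works through the ODE \eqref{Los Angeles} and the Fundamental Theorem of Calculus: writing $\widehat{u}(t_{1},\xi)-\widehat{u}(t_{0},\xi)$ as an integral of $D_{t}\widehat{u}$ over $[t_{0},t_{1}]$, it extracts a uniform-in-$\xi$ Lipschitz modulus $|t_{1}-t_{0}|$, at the price of an extra factor $(1+|\xi|)^{2}$ coming from the symbol $a_{2}(t)|\xi|^{2}+\cdots$ --- this is exactly where the derivative loss $-2$ in $H^{r-2-\rho}$ enters. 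You instead invoke only the pointwise-in-$\xi$ continuity of $t\mapsto\widehat{u}(t,\xi)$ (read directly off \eqref{Sunset Boulevard}) together with the $t$-uniform summable majorant supplied by \eqref{Zero Dark Thirty}, and then apply dominated convergence for series. This is a softer argument (no quantitative modulus of continuity), and in fact it proves continuity already in $H^{r-\rho}$: the $-2$ is not used anywhere in your argument, so the stated conclusion $u\in C([0,T];H^{r-2-\rho})$ follows a fortiori. In Part II the same structural difference appears: the paper again differentiates through the ODE and obtains an estimate proportional to $|t_{1}-t_{0}|$ with a residual exponential decay, whereas you give a qualitative argument --- sacrifice a sliver of decay from $\delta$ to $\delta'$, use the $t$-uniform tail bound $2Ce^{-(\delta-\delta')|\xi|^{1/s}}$ for $|\xi|>R$, and handle the finitely many low frequencies by pointwise continuity --- and feed the resulting sup-estimate into Lemma \ref{Boyhood} and Theorem \ref{On the Waterfront}. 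Both arguments are sound. The paper's version has the advantage of producing an explicit (Lipschitz) rate, which is occasionally convenient later when one wants to iterate or absorb constants; yours is more elementary and, in Part I, actually gives a slightly sharper Sobolev index. One small slip: your displayed dominating bound in Part I should read $(1+|\xi|)^{2r-4}$ rather than $(1+|\xi|)^{2r}$; this is a harmless over-estimate since the larger quantity is still summable, so the logic stands.
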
  

\begin{proof}
	If $f \in C\left([0,T]; H^{r}(\T^N)\right)$ and $g \in H^{r}(\T^N)$, there exists $A > 0$ such that
	\small{
	\begin{equation} \label{Children of Men} 
			\ds \sum_{\xi \in \Z^N} |\widehat{g}(\xi)|^{2} \left(1+ |\xi|\right)^{2r} \leq A \ \ \text{and} \ \  
			\max_{s \in [0, T]} \left(\ds \sum_{\xi \in \Z^N}  |\widehat{f}(s, \xi)|^{2} \left(1+ |\xi|\right)^{2r} \right) \leq A.
	\end{equation}
	}
	\normalsize
	Applying \eqref{Zero Dark Thirty} and Tonelli's theorem, we infer that 
	\small{
	\begin{align*}
		\ds \sum_{\xi \in \Z^N} |\widehat{u}(t, \xi)|^{2} \left(1+ |\xi|\right)^{2r - 2 \rho} &\leq 2C^{2} \ds \sum_{\xi \in \Z^N}  |\widehat{g}(\xi)|^{2} \left(1+ |\xi|\right)^{2r} + 2C^{2} \ds  \int_{0}^{T}  \left(\ds \sum_{\xi \in \Z^N}  |\widehat{f}(s, \xi)|^{2} \left(1+ |\xi|\right)^{2r} \right) ds \\
		&\leq 2AC^{2} (1 + T),  \ \ \forall t \in [0, T],
	\end{align*}
}
\normalsize
	which allows us to deduce that, for each fixed $t \in [0, T]$, $u(t, \cdot ) \in  H^{r - \rho} (\T^N)$.
	
	In order to prove continuity, take arbitrary $t_{0}, t_{1} \in [0,T]$. As a consequence of \eqref{Los Angeles} and the Fundamental Theorem of Calculus, we obtain for any  fixed $\xi \in \Z^N$
	\footnotesize{
		\begin{align}
			\widehat{u}(t_{1}, \xi) - \widehat{u}(t_{0}, \xi) &= (t_{1}-t_{0}) \ds \int_{0}^{1} \Bigg[ \Bigg. -i \left(a_{2}\left(t_{0} + y (t_{1} - t_{0}) \right) |\xi|^{2} + \ds \sum_{j = 1}^{N} a_{1, j}\left(t_{0} + y (t_{1} - t_{0})\right) \xi_{j} +  a_{0}\left(t_{0} + y (t_{1} - t_{0}) \right) \right) \nonumber \\
			&\times  \widehat{u}\left(t_{0} + y (t_{1} - t_{0}), \xi \right) + i\widehat{f}\left(t_{0} + y (t_{1} - t_{0}), \xi \right) \Bigg. \Bigg] dy. \label{Saving Private Ryan} 
		\end{align}
	}
	\normalsize
	Since all functions	$a_{0}, a_{1, j}$ e $a_{2}$ are uniformly bounded we are able to find $B > 0$  \emph{independent} of $\xi$, $t_{0}$, $t_{1}$ and $y$ such that 
	\small{
		\begin{equation} \label{Minority Report} 
			\left| -i \left(a_{2}\left(t_{0} + y (t_{1} - t_{0}) \right) \xi^{2} + \ds \sum_{j = 1}^{N} a_{1, j}\left(t_{0} + y (t_{1} - t_{0})\right) \xi_{j} +  a_{0}\left(t_{0} + y (t_{1} - t_{0}) \right) \right) \right| \leq B (1+ |\xi|)^{2}, \ \ \forall \xi \in \Z^N. 
		\end{equation} 
	}
	\normalsize
	By associating \eqref{Saving Private Ryan} to \eqref{Minority Report}, besides using the hypothesis  and Hölder's inequality, we conclude that 
	\small{
	\begin{align*}
		|\widehat{u}(t_{1}, \xi) - \widehat{u}(t_{0}, \xi)| &\leq |t_{1}-t_{0}| \left[ \ds \int_{0}^{1} B (1+ |\xi|)^{2}  \left|\widehat{u}\left(t_{0} + y (t_{1} - t_{0}), \xi \right) \right| + \left|\widehat{f}\left(t_{0} + y (t_{1} - t_{0}), \xi \right) \right| dy \right] \\
		&\leq |t_{1}-t_{0}| \ds \int_{0}^{1} B (1+ |\xi|)^{2} C (1+|\xi|)^{\rho} \left[|\widehat{g}(\xi)|  + \left( \ds  \int_{0}^{T} |\widehat{f}(s, \xi)|^{2} ds \right)^{1/2} \right] dy \ + \\
		&+ |t_{1}-t_{0}| \ds \int_{0}^{1} |\widehat{f}\left(t_{0} + y (t_{1} - t_{0}), \xi \right)| dy \\
		&\leq |t_{1}-t_{0}| \left[ BC (1+ |\xi|)^{2 + \rho} \left[|\widehat{g}(\xi)|  + \left( \ds  \int_{0}^{T} |\widehat{f}(s, \xi)|^{2} ds \right)^{1/2} \right] + \left( \ds  \int_{0}^{T} |\widehat{f}(s, \xi)| ds \right) \right] \\
		&\leq |t_{1}-t_{0}| D (1+ |\xi|)^{2 + \rho} \left[|\widehat{g}(\xi)|  + \left( \ds  \int_{0}^{T} |\widehat{f}(s, \xi)|^{2} ds \right)^{1/2} \right], \ \ \ \ \ \forall \xi \in \Z^N, 
	\end{align*}
	}
	\normalsize	
for some $D > 0$ which only depends on $B$, $C$ and $T$. 
	
	It follows from \eqref{Children of Men} and Tonelli's Theorem that
	\begin{align*}
		\ds \sum_{\xi \in \Z^N}|\widehat{u}(t_{1}, \xi) - \widehat{u}(t_{0}, \xi)|^{2} (1+ |\xi|)^{2r - 4 - 2 \rho} &\leq \ds \sum_{\xi \in \Z^N} |t_{1}-t_{0}|^{2} D^{2} \left[|\widehat{g}(\xi)|^{2}  + \left( \ds  \int_{0}^{T} |\widehat{f}(s, \xi)|^{2} ds \right) \right] (1+ |\xi|)^{2r}  \\
		&\leq |t_{1}-t_{0}|^{2} AD^{2} + |t_{1}-t_{0}|^{2} ATD^{2}.
	\end{align*}
	Thus, for any $t_{0}, t_{1} \in [0, T]$, we obtain $\left\|u(t_{1}, x) - u(t_{0}, x) \right\|_{r -2 - \rho} \leq M |t_{1} - t_{0}|$, for a constant $M > 0$ that only depends on $T$, $A$, $B$ and $C$. Therefore  $u \in C\left([0, T]; H^{r - 2 - \rho}(\T^N) \right)$, as we intended to prove.  
	
	We advance to the Gevrey setting; it follows immediately from \eqref{The Hurt Locker} that, for every fixed  $t \in [0, T]$,  $u(t, x) \in G^{s}(\T^N)$. It remains to prove continuity; by proceeding analogously to the first case, one has
	\begin{equation} \label{Taxi Driver}
		|\widehat{u}(t_{1}, \xi) - \widehat{u}(t_{0}, \xi)| \leq |t_{1}-t_{0}| \left[ \ds \int_{0}^{1} B (1+ |\xi|)^{2}  |\widehat{u}\left(t_{0} + y (t_{1} - t_{0}), \xi \right)| + |\widehat{f}\left(t_{0} + y (t_{1} - t_{0}), \xi \right)| \ dy \right]. 
	\end{equation}
Applying once again Lemma \ref{The People vs. Larry Flynt} and Theorem \ref{On the Waterfront}, there exist $C', \sigma > 0$ such that
	\begin{equation} \label{The Color Purple}
		|\widehat{f}(t, \xi)| \leq C' e^{- \sigma |\xi|^{1/s}}, \ \forall t \in [0, T], \ \forall \xi \in \Z^N. 
	\end{equation}
	Take $C_{1} = \max \left\{C, C' \right\}$ and $\omega = \min \left\{\sigma, \delta \right\}$; it is a consequence of \eqref{The Hurt Locker}, \eqref{Taxi Driver} and \eqref{The Color Purple} that
	\begin{align*}
		|\widehat{u}(t_{1}, \xi) - \widehat{u}(t_{0}, \xi)| &\leq |t_{1}-t_{0}| C_{2} e^{- \omega/2 |\xi|^{1/s}}, \ \ \ \ \forall \xi \in \Z^N,
	\end{align*}
	where the constant $C_{2}$ depends only on $B, C, C', \sigma$ and $\delta$. The result is a direct consequence of Lemma \ref{Boyhood} and the proof is complete. 
\end{proof}

\begin{Pro} \label{The Graduate} 
	Let $P$ be the operator described in \eqref{Her}. 
	\begin{enumerate}[label=\alph*),  wide,  labelindent=0pt ]
		\item \label{Back to the Future}  If there is  $\delta \geq  0$ such that  \eqref{Schindler's List} is well-posed in $H^{r}$ with loss of $\delta$ derivatives then it is well-posed in $C^{\infty}$. 
		\item \label{Back to the Future II} If \eqref{Schindler's List} is well-posed in $C^{\infty}$, it is also well-posed in $G^{s}$, for every  $s \geq 1$. 
		\item \label{Back to the Future III} Suppose  $s_{1}, s_{2} \in \R$ such that $1 \leq s_{2} \leq s_{1}$. If  \eqref{Schindler's List} is well-posed in $G^{s_{1}}$, the same holds in $G^{s_{2}}$.	
	\end{enumerate}
\end{Pro}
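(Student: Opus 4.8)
For item \ref{Back to the Future} I would work directly on the Sobolev scale. If $f\in C([0,T];C^{\infty}(\T^{N}))$ and $g\in C^{\infty}(\T^{N})$, then by Theorem \ref{The Godfather II} the pair belongs to $C([0,T];H^{r}(\T^{N}))\times H^{r}(\T^{N})$ for every $r\in\R$, so $H^{r}$-well-posedness with loss $\delta$ produces, for each $r$, a solution $u_{r}\in C([0,T];H^{r-\delta}(\T^{N}))$. Since by Remark \ref{Requiem for a Dream} the initial value problem has at most one solution in any such class (its Fourier coefficients being forced by \eqref{Sunset Boulevard}), all the $u_{r}$ coincide with a single $u$, which therefore lies in $C([0,T];H^{r-\delta}(\T^{N}))$ for every $r$, hence in $C([0,T];C^{\infty}(\T^{N}))$ by Theorem \ref{The Godfather II}. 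Uniqueness in $C^{\infty}$ is again Remark \ref{Requiem for a Dream}.

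For items \ref{Back to the Future II} and \ref{Back to the Future III} I would use one common argument. Write $X=C^{\infty}(\T^{N})$ in case \ref{Back to the Future II} and $X=G^{s_{1}}(\T^{N})$ in case \ref{Back to the Future III}. Well-posedness in $X$ makes the solution map $S\colon C([0,T];X)\times X\to C([0,T];X)$, $(f,g)\mapsto u$, well-defined and linear; its graph is closed, since if $(f_{n},g_{n})\to(f,g)$ and $S(f_{n},g_{n})\to v$, passing to the limit in $D_{t}u_{n}=Q(t,D_{x})u_{n}+f_{n}$ ($D_{t}$ and $Q(t,D_{x})$ acting continuously in $D'((0,T);H^{\tau}(\T^{N}))$, and evaluation at $t=0$ being continuous on $C([0,T];X)$) shows $v=S(f,g)$. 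Because $C^{\infty}(\T^{N})$ is Fréchet-Schwartz and $G^{s_{1}}(\T^{N})$ is DFS --- so the spaces at hand are webbed and ultrabornological --- the closed graph theorem makes $S$ continuous. Evaluating the resulting estimate on single Fourier modes $f=\widehat{f}(t,\xi)e^{ix\xi}$, $g=\widehat{g}(\xi)e^{ix\xi}$, whose unique solution is exactly $\widehat{u}(t,\xi)e^{ix\xi}$ by \eqref{Los Angeles}, and unwinding the seminorm of $X$ on such one-mode functions, gives
\begin{equation*}
\sup_{t\in[0,T]}|\widehat{u}(t,\xi)|\;\le\;C\,\Gamma(\xi)\,\Bigl(\sup_{t\in[0,T]}|\widehat{f}(t,\xi)|+|\widehat{g}(\xi)|\Bigr),\qquad \xi\in\Z^{N},
\end{equation*}
with $\Gamma(\xi)=(1+|\xi|)^{m}$ in case \ref{Back to the Future II} and $\Gamma(\xi)=C'e^{\kappa|\xi|^{1/s_{1}}}$ in case \ref{Back to the Future III}, for suitable $m,C',\kappa$ depending only on $S$ and on the chosen datum.

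Now I would feed this a datum from the smaller space: $f\in C([0,T];G^{s}(\T^{N}))$, $g\in G^{s}(\T^{N})$ in case \ref{Back to the Future II}, and the same with $s_{2}<s_{1}$ in place of $s$ in case \ref{Back to the Future III}. By Theorem \ref{On the Waterfront} and Lemma \ref{The People vs. Larry Flynt}, $\sup_{t}|\widehat{f}(t,\xi)|$ and $|\widehat{g}(\xi)|$ are $\le C_{1}e^{-\delta_{1}|\xi|^{1/s}}$ (resp. $\le C_{1}e^{-\delta_{1}|\xi|^{1/s_{2}}}$); inserting this into the displayed bound, the polynomial $\Gamma$ is absorbed at once in case \ref{Back to the Future II}, while in case \ref{Back to the Future III} one uses $1/s_{1}<1/s_{2}$ to get $\kappa|\xi|^{1/s_{1}}-\delta_{1}|\xi|^{1/s_{2}}\le-\tfrac12\delta_{1}|\xi|^{1/s_{2}}$ for $|\xi|$ large. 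In every case $|\widehat{u}(t,\xi)|\le Ce^{-\delta_{2}|\xi|^{1/s}}$ (with $s_{2}$ in place of $s$ in case \ref{Back to the Future III}), uniformly in $t$, so part II of Lemma \ref{American Gangster} yields $u\in C([0,T];G^{s}(\T^{N}))$ (resp. $G^{s_{2}}(\T^{N})$); uniqueness is Remark \ref{Requiem for a Dream}.

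The substantial points are all in the middle step. One must make sure the closed graph theorem genuinely applies in the Gevrey case, where the spaces are not metrizable --- precisely why the Fréchet-Schwartz/DFS structure recorded in Section \ref{The Post} is needed --- and one must check that $\widehat{f}(t,\xi)e^{ix\xi}$ stays in a fixed Banach step $G^{s_{1}}_{h}(\T^{N})$ while controlling the growth of its norm there in $\xi$ (this is the source of $\Gamma(\xi)$ and $\kappa$), as well as that the single-mode problem is genuinely decoupled, so its solution carries no other frequencies. A more hands-on alternative to the closed-graph step is a contradiction argument: if the propagators appearing in \eqref{Sunset Boulevard} grew faster than $\Gamma$ along a sequence $|\xi_{k}|\to\infty$, then feeding \eqref{Schindler's List} a single-mode datum with amplitude decaying just slowly enough to remain in $X$ and time-profile concentrated near the offending instant produces a forced solution that escapes $C([0,T];X)$ --- indeed fails to be a distribution at some $t_{k}$ --- contradicting well-posedness.
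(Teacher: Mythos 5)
Your proof is correct in outline, but it takes a genuinely different route from the paper's, so a comparison is in order. For item~\ref{Back to the Future} the two coincide (it is the standard uniqueness-plus-intersection argument the paper calls ``pretty standard''). For items~\ref{Back to the Future II} and~\ref{Back to the Future III} the paper does \emph{not} use the closed graph theorem. Instead, it first normalizes the operator via Theorem~\ref{Cries and Whispers}, reducing to $\widetilde{P}$ from~\eqref{Raiders of the Lost Ark}, and then \emph{directly constructs a single test datum}: for \ref{Back to the Future II} it takes $g_{1}(x)=\sum_{\xi}e^{-|\xi|^{1/(2s)}}e^{ix\xi}$ (which lies in $C^{\infty}(\T^{N})$ but not in $G^{s}(\T^{N})$, precisely because $1/(2s)<1$), so that the explicit solution formula~\eqref{Sunset Boulevard} gives $\widehat{u_{1}}(t,\xi)=e^{-|\xi|^{1/(2s)}}\varUpsilon_{1}(t,\xi)$; the mere fact that $u_{1}\in C([0,T];C^{\infty}(\T^{N}))$ then forces $\varUpsilon_{1}(t,\xi)\le M_{1}e^{|\xi|^{1/(2s)}}$, uniformly in $t$. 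A second test datum controls the Duhamel propagator $\varUpsilon_{2}$, and with both bounds in hand any $G^{s}$ data produces $|\widehat{u}(t,\xi)|\le Me^{-\delta|\xi|^{1/s}+|\xi|^{1/(2s)}}\le M'e^{-(\delta/2)|\xi|^{1/s}}$, and Lemma~\ref{American Gangster} closes the argument. The same device with $e^{-|\xi|^{1/s_{1}}}$ handles~\ref{Back to the Future III}. The paper's route buys self-containedness: it never leaves the Banach scale, needs no locally convex machinery, and exploits the explicit solution formula. Your closed-graph route buys generality: it would survive the loss of an explicit propagator. But it carries a burden the paper sidesteps --- you must first equip $C([0,T];G^{s_{1}}(\T^{N}))$ with a genuine locally convex topology (the paper treats it only as a set; Theorem~\ref{On the Waterfront} furnishes the candidate DFS structure, but this must be made precise), verify it is webbed and ultrabornological so de Wilde's theorem applies, and then invoke Grothendieck's factorization for DFS spaces to turn abstract continuity into a bound in a fixed Banach step $G^{s_{1}}_{h'}$ before evaluating on single modes. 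You flag these points yourself, which is fair; they are fillable but nontrivial. Finally, your ``hands-on alternative'' by contradiction is closer in spirit to what the paper actually does, though the paper's version is cleaner: no time-localization, no contradiction --- a single global test series does all the work.

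Two minor remarks. In case~\ref{Back to the Future II} your $\Gamma(\xi)=(1+|\xi|)^{m}$ comes from fixing one Sobolev index $r$ (say $r=0$) in the continuity estimate; it is worth stating this, since continuity alone gives a different exponent $m_{r}$ for each $r$, and only one is needed. And in the single-mode evaluation in the Gevrey case, the Banach-step norm of $e^{ix\xi}$ in $G^{s_{1}}_{h}$ grows like $e^{c_{h}|\xi|^{1/s_{1}}}$ with $c_{h}\downarrow 0$ as $h\uparrow\infty$; since Grothendieck's factorization gives $h'\ge h$, the exponent $\kappa=c_{h}-c_{h'}$ in $\Gamma(\xi)$ is indeed nonnegative, but this sign check should be recorded, as the argument would not survive a negative $\kappa$ with the wrong orientation.
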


\begin{proof}
Implication  \ref{Back to the Future} is pretty standard; we proceed to statement \ref{Back to the Future II}. If \eqref{Schindler's List} is well-posed in $C^{\infty}$ it follows from  Theorem \ref{Cries and Whispers} the well-posedness in $C^{\infty}$ for \eqref{Wild Strawberries}, with $\widetilde{P}$ set in \eqref{Raiders of the Lost Ark}. Fix $s \geq 1$ and take 
	\begin{equation*}
		f_{1} \equiv 0, \ \  g_{1}(x) = \ds \sum_{\xi \in \Z^N} e^{-|\xi|^{1/2s}}  e^{i x \xi}.
	\end{equation*} 
	Then $g_{1} \in C^{\infty} (\T^N)$ and $\widehat{g_{1}}(\xi) = e^{-|\xi|^{1/2s}}$, for each $\xi \in \Z^N$. By hypothesis there exists a unique $u_{1} \in C \left([0,T]; C^{\infty}(\T^N) \right)$ which solves
	\begin{equation*}  
		\left\{ \begin{array} {rl}
			\widetilde{P}u_{1}(t,x) &= 0, \\
			u_{1}(0, x) & = g_{1}(x), 
		\end{array} \right. \ \ \forall t \in [0, T], \ \forall x \in \T^N.
	\end{equation*}
	In this particular case, we write $u_{1} = \ds \sum_{\xi \in \Z^{N}} \widehat{u_{1}}(t, \xi) e^{i x \xi}$  and obtain (analogously from \eqref{Sunset Boulevard}) 
	\small{
	\begin{equation} \label{Ben-Hur} 
		\widehat{u_{1}}(t, \xi) = e^{-|\xi|^{1/2s}} \underbrace{\exp \left( C_{2}(t) |\xi|^{2} + \ds \sum_{j = 1}^{N} C_{1, j}(t) \xi_{j}\right)}_{:= \varUpsilon_{1} (t, \xi)}, \ \ \ \forall t \in [0, T], \ \ \forall \xi \in \Z^N.
	\end{equation}
	}
	\normalsize
By the fact that $u_{1} \in C \left([0,T]; C^{\infty}(\T^N) \right)$, one infers the existence of $M_{1} > 0$ such that
	\begin{equation}  \label{The Bridge on the River Kwai} 
		\varUpsilon_{1} (t, \xi)  \leq M_{1} e^{|\xi|^{1/2s}} \ \ \ \forall t \in [0, T], \ \ \forall \xi \in \Z^N. 
	\end{equation}

	Now set	 $f_{2}(t,x) = - i \ds \sum_{\xi \in \Z^N}  e^{-|\xi|^{1/2s}} e^{i x \xi}$ and $g_{2} \equiv 0$. Similarly,  $f_{2} \in C \left([0, T]; C^{\infty}(\T^N) \right)$ and there exists a unique $u_{2} = \ds \sum_{\xi \in \Z^{N}} \widehat{u_{2}}(t, \xi) e^{i x \xi}$ in  $C \left([0, T]; C^{\infty}(\T^N) \right)$ for which
	\begin{equation*}  
		\left\{ \begin{array} {rl}
			\widetilde{P}u_{2}(t,x) &= f_{2}(t,x) \\
			u_{2}(0, x) & = 0
		\end{array} \right. \ \ \forall t \in [0, T], \ \forall x \in \T^N.
	\end{equation*}
	Then
	\small{
	\begin{equation} \label{Gandhi} 
		\widehat{u_{2}}(t, \xi) = \ds e^{-|\xi|^{1/2s}} \underbrace {\int_{0}^{t}   \exp  \left[\left(C_{2}(t) - C_{2}(s) \right) |\xi|^{2} + \ds \sum_{j = 1}^{N} \left(C_{1, j}(t) - C_{1, j}(s) \right) \xi_{j} \right]   ds}_{:= \varUpsilon_{2} (t, \xi)},  \ \ \ \forall t \in [0, T], \ \ \forall \xi \in \Z^N.
	\end{equation}
	}
	\normalsize
	and there exists $M_{2} > 0$ such that 
	\begin{equation} \label{All About Eve}
		\varUpsilon_{2} (t, \xi)  \leq M_{2} e^{|\xi|^{1/2s}} \ \ \ \forall t \in [0, T], \ \ \forall \xi \in \Z^N. 
	\end{equation}
	
	 Suppose $f \in C \left([0, T]; G^{s}(\T^N) \right)$ e $g \in G^{s}(\T^N)$. Then there exist $M, \delta > 0$ such that  
	\begin{equation} \label{Zodiac}
		\begin{split}
			|\widehat{g}(\xi)|\leq M e^{- \delta |\xi|^{1/s}} \ \ \text{and} \ \  |\widehat{f}(t, \xi)| \leq M e^{- \delta |\xi|^{1/s}}, \ \ \ \forall t \in [0, T], \ \forall \xi \in \Z^N.
		\end{split}
	\end{equation} 
	Let $u \in C \left([0, T]; C^{\infty}(\T^N) \right)$ be the unique solution for 
	\begin{equation*}  
		\left\{ \begin{array} {rl}
			\widetilde{P}u(t,x) &= f(t,x), \\
			u(0, x) & = g(x),
		\end{array} \right. \ \ \forall t \in [0, T], \ \forall x \in \T^N.
	\end{equation*}
	It follows from \eqref{Sunset Boulevard} that
	\small{
		\begin{equation} \label{Lawrence of Arabia} 
			\begin{split}
				\widehat{u}(t, \xi) &= \widehat{g}(\xi) \exp \left(C_{2}(t) |\xi|^{2} + \ds \sum_{j = 1}^{N} C_{1, j}(t) \xi_{j} \right)  + \\
				&+ \ds i \int_{0}^{t} \widehat{f}(s, \xi)  \exp  \left[\left(C_{2}(t) - C_{2}(s) \right) |\xi|^{2} + \ds \sum_{j = 1}^{N} \left(C_{1, j}(t) - C_{1, j}(s) \right) \xi_{j}  \right]   ds,  \ \ \ \forall t \in [0, T], \ \ \forall \xi \in \Z^N.
			\end{split}
		\end{equation}
	}
	\normalsize
	By associating \eqref{Lawrence of Arabia} to \eqref{Ben-Hur}, \eqref{The Bridge on the River Kwai}, \eqref{Gandhi}, \eqref{All About Eve} and \eqref{Zodiac}, we deduce that
	\begin{equation} \label{Jaws}
		\begin{split}
			|\widehat{u}(t, \xi)| & \leq \left|\widehat{g}(\xi) \right|  \varUpsilon_{1} (t, \xi)  +  \left(\ds \max_{r \in [0, T]} |\widehat{f}(r, \xi)| \right)  \varUpsilon_{2} (r, \xi)  \leq  M e^{- \delta |\xi|^{1/s}} \left( M_{1} e^{|\xi|^{1/2s}} +  M_{2} e^{|\xi|^{1/2s}}  \right)   \\
			&\leq M_{3} e^{- \delta |\xi|^{1/s}} e^{|\xi|^{1/2s}}  \leq M_{4} e^{- \frac{\delta}{2} |\xi|^{1/s}}, \ \ \forall t \in [0,T], \ \forall \xi \in \Z^N,
		\end{split}
	\end{equation}
	for some constant $M_{4} > 0$ that depends only on $M, M_{1}, M_{2}$ and $\delta$. We infer from Lemma \ref{American Gangster} that the initial-value problem for $\widetilde{P}$ is well-posed in $G^{s}$. The result is then a consequence of Theorem \ref{Cries and Whispers}. Finally, \ref{Back to the Future III} is proved in an analogous manner. 
\end{proof}

\begin{Cor} \label{Roma}
	If the problem \eqref{Schindler's List} is not well-posed in $C^{\omega} = G^{1}$, it is not well-posed in any of the $G^{s}$,  $C^{\infty}$ or $H^{r}$ spaces.
\end{Cor}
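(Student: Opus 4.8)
The plan is to read the statement off as the \emph{contrapositive} of the chain of implications already established in Proposition \ref{The Graduate}. That proposition supplies exactly three links: part \ref{Back to the Future} says that $H^{r}$ well-posedness (with any finite loss $\delta \geq 0$) implies $C^{\infty}$ well-posedness; part \ref{Back to the Future II} says that $C^{\infty}$ well-posedness implies $G^{s}$ well-posedness for every $s \geq 1$; and part \ref{Back to the Future III} says that $G^{s_{1}}$ well-posedness implies $G^{s_{2}}$ well-posedness whenever $1 \leq s_{2} \leq s_{1}$. All the analytic substance --- the a priori Fourier-coefficient estimates of Lemma \ref{American Gangster} together with the reduction to the normal form $\widetilde{P}$ furnished by Theorem \ref{Cries and Whispers} --- is already contained in the proof of Proposition \ref{The Graduate}, so at this point there is nothing further to establish; one only has to assemble the implications in the correct order.

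Concretely, I would argue by contradiction. Suppose \eqref{Schindler's List} fails to be well-posed in $C^{\omega}(\T^N) = G^{1}(\T^N)$, and yet is well-posed in one of the remaining spaces. If it is well-posed in $G^{s}(\T^N)$ for some $s \geq 1$, apply part \ref{Back to the Future III} with $s_{1} = s$ and $s_{2} = 1$ to obtain well-posedness in $G^{1}(\T^N) = C^{\omega}(\T^N)$, contradicting the hypothesis. If it is well-posed in $C^{\infty}(\T^N)$, apply part \ref{Back to the Future II} with $s = 1$ and reach the same contradiction. Finally, if it is well-posed in $H^{r}$ (that is, with zero loss of derivatives, hence in particular with loss $\delta = 0$), part \ref{Back to the Future} yields $C^{\infty}$ well-posedness, and we are reduced to the case just treated.

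I do not anticipate any genuine obstacle; the only point deserving a word of care is that part \ref{Back to the Future} is phrased in terms of $H^{r}$ well-posedness \emph{with loss of $\delta$ derivatives}, whereas the corollary refers to plain $H^{r}$ well-posedness. Since the latter is merely the instance $\delta = 0$ and part \ref{Back to the Future} allows arbitrary $\delta \geq 0$, this discrepancy is harmless; in fact the argument shows, slightly more strongly, that failure of $C^{\omega}$ well-posedness rules out $H^{r}$ well-posedness with \emph{any} finite loss of derivatives.
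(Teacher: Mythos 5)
Your proof is correct and is essentially the same argument the paper has in mind: Corollary \ref{Roma} is stated immediately after Proposition \ref{The Graduate} precisely because it is its contrapositive. Your chaining of parts \ref{Back to the Future}, \ref{Back to the Future II}, \ref{Back to the Future III}, and your note that plain $H^{r}$ well-posedness is the $\delta=0$ case, are exactly right.
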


Since the coefficients operator $P$ do not depend on space variables, it is not difficult to verify the next result.

\begin{Pro} 
	Consider $P$ the operator given in \eqref{Her} and assume the existence of $\kappa \in \R$, $\sigma \geq 0$ such that for any $f \in C \left([0, T]; H^{\kappa}(\T^N)\right)$ and $g \in H^{\kappa}(\T^N)$ there exists  $u \in C\left([0, T]; H^{\kappa - \sigma}(\T^N) \right)$ which solves \eqref{Schindler's List}; then \eqref{Schindler's List} is well-posed in $H^{r}$ with loss of $\sigma$ derivatives. In particular, well-posedness in $H^{r}$ is equivalent to well-posedness in $L^{2}$. 
\end{Pro}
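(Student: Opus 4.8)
The plan is to use the one structural feature that makes this proposition, in the authors' words, ``not difficult'': since the coefficients of $P$ in \eqref{Her} do not depend on $x$, the operator $P$ acts diagonally on Fourier coefficients. Concretely, combining Theorem \ref{Chinatown} with Remarks \ref{The Pianist} and \ref{Munich}, for any $u\in C([0,T];H^{a}(\T^N))$ one has, in $\D'((0,T);H^{a-2}(\T^N))$,
\[
Pu \;=\; \sum_{\xi\in\Z^N}\Big(D_t\widehat u(t,\xi)+\big(a_2(t)|\xi|^2+\sum_{j=1}^N a_{1,j}(t)\xi_j+a_0(t)\big)\widehat u(t,\xi)\Big)e^{ix\xi},
\]
so that $P$ commutes with every Fourier multiplier in $x$. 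I would then introduce, for $m\in\R$, the Bessel-type operator $\Lambda^m$ defined by $\widehat{\Lambda^m v}(\xi)=(1+|\xi|)^m\widehat v(\xi)$; it is an isometric isomorphism of $H^a(\T^N)$ onto $H^{a-m}(\T^N)$ for every $a$, it carries $C([0,T];H^a(\T^N))$ onto $C([0,T];H^{a-m}(\T^N))$ (continuity in $t$ being immediate from $\|\Lambda^m v(t)-\Lambda^m v(t_0)\|_{H^{a-m}(\T^N)}=\|v(t)-v(t_0)\|_{H^{a}(\T^N)}$), and it satisfies $\Lambda^{-m}\Lambda^m=\mathrm{Id}$ together with $P\circ\Lambda^m=\Lambda^m\circ P$ and $D_t\circ\Lambda^m=\Lambda^m\circ D_t$ on these spaces.

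With this in place the proof is a change of scale. Fix $r\in\R$, take $f\in C([0,T];H^{r}(\T^N))$ and $g\in H^{r}(\T^N)$, and set $m:=r-\kappa$. Then $\Lambda^m f\in C([0,T];H^{\kappa}(\T^N))$ and $\Lambda^m g\in H^{\kappa}(\T^N)$, so the hypothesis yields $w\in C([0,T];H^{\kappa-\sigma}(\T^N))$ solving $Pw=\Lambda^m f$, $w(0,\cdot)=\Lambda^m g$. I would put $u:=\Lambda^{-m}w$, which lies in $C([0,T];H^{\kappa-\sigma+m}(\T^N))=C([0,T];H^{r-\sigma}(\T^N))$. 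Applying $\Lambda^{-m}$ and commuting it past $P$ gives $Pu=\Lambda^{-m}(Pw)=\Lambda^{-m}\Lambda^m f=f$, while evaluating at $t=0$ gives $u(0,\cdot)=\Lambda^{-m}w(0,\cdot)=\Lambda^{-m}\Lambda^m g=g$; hence $u$ solves \eqref{Schindler's List} with the prescribed loss. Uniqueness of such a $u$ needs nothing new: the candidate is forced to have the Fourier coefficients \eqref{Sunset Boulevard}, so there is at most one solution (Remark \ref{Requiem for a Dream}). Therefore \eqref{Schindler's List} is well-posed in $H^r$ with loss of $\sigma$ derivatives, and specializing to $\kappa=0$ shows that whenever some $\sigma\ge 0$ makes every $L^2(\T^N)$-datum produce a solution in $C([0,T];H^{-\sigma}(\T^N))$, \eqref{Schindler's List} is well-posed in $H^r$ with loss $\sigma$; the converse being trivial, well-posedness in $H^r$ is equivalent to well-posedness in $L^2$.

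The only point I expect to deserve genuine attention is the commutation $P\circ\Lambda^{\pm m}=\Lambda^{\pm m}\circ P$ at the level of the spaces $\D'((0,T);H^{\bullet}(\T^N))$, where $P$ (and the weak $t$-derivative on which it is built) is defined only on the candidate solutions: this is precisely what the termwise Fourier-series identities of Theorem \ref{Chinatown} and Remarks \ref{The Pianist}--\ref{Munich} deliver, since $\Lambda^{\pm m}$ is nothing but multiplication of the $\xi$-th Fourier coefficient by the scalar $(1+|\xi|)^{\pm m}$. No energy estimate is involved; everything else is bookkeeping along the Sobolev scale. As an alternative one could instead extract an a priori bound of the form \eqref{Zero Dark Thirty} with $\rho=\sigma$ from the assumed existence via a closed-graph argument and then invoke Lemma \ref{American Gangster}, but the rescaling above is the more direct route.
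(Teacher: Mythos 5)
Your proof is correct and follows essentially the same route the paper takes: the authors also conjugate by the Fourier multiplier $\widehat{v}(\xi)\mapsto(1+|\xi|)^{r-\kappa}\widehat{v}(\xi)$ (your $\Lambda^{r-\kappa}$), check it is an isometry of the relevant $C([0,T];H^{\bullet}(\T^N))$ spaces, and use the $x$-independence of the coefficients to commute it past $P$, with uniqueness coming from Remark~\ref{Requiem for a Dream}. Packaging the rescaling as $\Lambda^{m}$ is purely notational.
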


\section{Ill-posedness for a family of operators} \label{Memento}
	
	In the present section we show that if $\Im (a_{2})$ is positive at any point of the interval $[0, T]$, \eqref{Schindler's List} is \emph{never} well-posed in $C^{\omega}$ and hence in any of the settings previously defined (Corollary \ref{Roma}). Recall that Theorem \eqref{Cries and Whispers} allows us to consider a \emph{simpler} operator, described in \eqref{Raiders of the Lost Ark}. In this situation, if $u$ solves
	\begin{equation} \label{The Lord of the Rings: The Fellowship of the Ring}
		\left\{ \begin{array} {rl}
			\widetilde{P}u(t,x) &= f(t,x), \\
			u(0, x) & = g(x), 
		\end{array} \right. \ \ \forall t \in [0, T], \ \forall x \in \T^{N},
	\end{equation}
	it follows from a analogous computation made in  \eqref{Sunset Boulevard} that
	\small{
	\begin{equation} \label{Forrest Gump} 
		\begin{split}
			\widehat{u}(t, \xi) &= \widehat{g}(\xi) \exp  \left[C_{2}(t) |\xi|^{2} + \ds \sum_{j = 1}^{N} C_{1, j}(t) \xi_{j}  \right]  + \\
			&+ \ds i \int_{0}^{t} \widehat{f}(s, \xi)  \exp  \left[\left(C_{2}(t) - C_{2}(s) \right) |\xi|^{2} + \ds \sum_{j = 1}^{N} \left(C_{1, j}(t) - C_{1, j}(s) \right) \xi_{j}  \right]   ds,  \ \ \forall t \in [0, T], \ \ \forall \xi \in \Z^N.
		\end{split}
	\end{equation}
}
\normalsize
	\begin{Teo} \label{The Lord of the Rings: The Return of the King} Suppose  $t^{\star} \in [0,T]$ such that $c_{2}(t^{\star}) > 0$. Then there exist  $f(t,x) \in C \left([0, T]; C^{\omega}(\T^N) \right)$ and $g(x) \in C^{\omega} (\T^N)$ for which there is no solution in $C\left([0, T]; C^{\omega}(\T^N) \right)$ for \eqref{The Lord of the Rings: The Fellowship of the Ring}. 
	\end{Teo}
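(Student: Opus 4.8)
The plan is to take $g\equiv 0$ together with a source term $f$ whose $x$-Fourier profile is real-analytic and whose time dependence is concentrated just before a point where $c_2>0$. Formula \eqref{Forrest Gump} then forces the Fourier coefficients of the (unique) candidate solution to grow like $e^{c|\xi|^2}$ at a later time, which is incompatible with the characterization of $C^\omega(\T^N)$ in Proposition \ref{Dr. Strangelove}. The uniqueness making this argument rigorous is already recorded in Remark \ref{Requiem for a Dream} and in the analysis of Section \ref{Apollo 13}, applied to $\widetilde P$ (which has the form \eqref{Her} with continuous coefficients).

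Concretely, since $c_2\in C([0,T];\R)$ with $c_2(t^\star)>0$, continuity yields $t_1<t_2$ in $[0,T]$ and $\eta>0$ with $c_2>\eta$ on $[t_1,t_2]$ (this holds whether $t^\star$ is interior to $[0,T]$ or an endpoint, because $T>0$). Writing $m=(t_1+t_2)/2$ and using the primitive $C_2$ from \eqref{Goodfellas}, one has $C_2(t_2)-C_2(s)=\int_s^{t_2}c_2\,ds\ge \eta(t_2-t_1)/2=:\mu>0$ for every $s\in[t_1,m]$. Fix a continuous $\psi\ge 0$ with $\psi\not\equiv 0$ and $\supp\psi\subseteq[t_1,m]$, and set
\begin{equation*}
f(t,x)=\sum_{\xi\in\Z^N}\psi(t)\,e^{-|\xi|}e^{ix\xi},\qquad g\equiv 0.
\end{equation*}
Since the $\xi$-profile $e^{-|\xi|}$ is fixed and $\psi$ is continuous, Lemma \ref{Boyhood} and Theorem \ref{On the Waterfront} give $f\in C([0,T];C^\omega(\T^N))$, and clearly $g\in C^\omega(\T^N)$.

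Now suppose, for contradiction, that some $u\in C([0,T];C^\omega(\T^N))$ solves \eqref{The Lord of the Rings: The Fellowship of the Ring}. Since $C^\omega(\T^N)\hra H^r(\T^N)$ for every $r$, the computations of Section \ref{Apollo 13} force $\widehat u(t,\xi)$ to equal the right-hand side of \eqref{Forrest Gump}; for $t=t_2$ and our data this reads
\begin{equation*}
\widehat u(t_2,\xi)=i\int_0^{t_2}\psi(s)\,e^{-|\xi|}\exp\!\left[\bigl(C_2(t_2)-C_2(s)\bigr)|\xi|^2+\sum_{j=1}^{N}\bigl(C_{1,j}(t_2)-C_{1,j}(s)\bigr)\xi_j\right]ds.
\end{equation*}
The integrand is $i$ times a nonnegative real number, so no cancellation occurs; bounding the linear exponent from below by $-K|\xi|$ for a constant $K$ depending only on the coefficients, and using $C_2(t_2)-C_2(s)\ge\mu$ on $\supp\psi$, we obtain
\begin{equation*}
|\widehat u(t_2,\xi)|\ \ge\ \Bigl(\int_0^{t_2}\psi\Bigr)\,e^{\mu|\xi|^2-(1+K)|\xi|},\qquad\forall\,\xi\in\Z^N,
\end{equation*}
with $\int_0^{t_2}\psi>0$ and $\mu>0$. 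This lower bound grows strictly faster than any $Ce^{-\delta|\xi|}$, so by Proposition \ref{Dr. Strangelove} with $s=1$ (recall $C^\omega(\T^N)=G^1(\T^N)$) we get $u(t_2,\cdot)\notin C^\omega(\T^N)$, a contradiction. Corollary \ref{Roma} then propagates the ill-posedness to every $G^s$, $C^\infty$ and $H^r$ scale.

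The only delicate point is the choice of data: one cannot get away with $f\equiv 0$ and a clever $g$, because $C_2(t)=\int_0^t c_2\,ds$ may be $\le 0$ throughout $[0,T]$ even when $c_2(t^\star)>0$. The time-localized forcing is precisely what "injects" a non-decaying state at time $t_1$ that the parabolic-type factor $e^{(C_2(t_2)-C_2(s))|\xi|^2}$ subsequently amplifies over $[t_1,t_2]$. Everything else — that $f$ is genuinely a continuous $C^\omega(\T^N)$-valued curve, and that the crude exponential estimates above are uniform in $\xi$ — is routine given the continuity of the coefficients and Lemma \ref{Boyhood}.
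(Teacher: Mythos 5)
Your proof is correct and follows essentially the same strategy as the paper's: take $g\equiv 0$ and a source $f$ with $x$-Fourier profile $e^{-|\xi|}$, use the explicit Fourier formula \eqref{Forrest Gump} together with uniqueness, and show that the factor $e^{(C_2(t)-C_2(s))|\xi|^2}$ forces super-analytic growth at a later time where $c_2>0$ on the preceding interval. The only cosmetic difference is that you localize $f$ in time via a cutoff $\psi$ supported near the region where $c_2>0$, whereas the paper takes $f$ independent of $t$ and instead restricts the domain of integration to $[t^\star-\delta,\,t^\star-\delta/2]$ to obtain the same lower bound; both variants yield the same conclusion.
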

	
	\begin{proof}
	Since $c_{2}$ is continuous, we may assume $t^{\ast} > 0$. Furthermore we are able to find   $\delta, \varepsilon > 0$ such that $[t^{\star} - \delta, t^{\star}] \subset [0, T]$ and
		\begin{equation} \label{The Lord of the Rings: The Two Towers}
			c_{2}(t) \geq \varepsilon, \ \ \forall t \in [t^{\star} - \delta, t^{\star}].
		\end{equation}
		Let  $g(x) \equiv 0$ and $f(t, x) = f(x) = \ds \sum_{\xi \in \Z^N} e^{-|\xi|} e^{ix \xi}.$  It follows from \eqref{Forrest Gump} that
		\small{
		\begin{equation} \label{The Prestige}
			|\widehat{u}(t, \xi)| = e^{-|\xi|} \ds  \int_{0}^{t} \exp  \left[\left(C_{2}(t) - C_{2}(s) \right) |\xi|^{2} + \ds \sum_{j = 1}^{N} \left(C_{1, j}(t) - C_{1, j}(s) \right) \xi_{j}  \right] ds, \ \ \text{ $\forall t \in [0,T]$, $\forall \xi \in \Z^N$}.
		\end{equation}
		}
		\normalsize

		On the other hand, observe that
		\begin{equation} \label{The Dark Knight Rises}
			C_{2}(t) - C_{2}(s) =  \ds \int_{0}^{t} c_{2}(r) dr - \ds \int_{0}^{s} c_{2}(r) dr =  \ds \int_{s}^{t} c_{2}(r) dr.
		\end{equation}
		Hence by associating \eqref{The Lord of the Rings: The Two Towers} and \eqref{The Dark Knight Rises} to \eqref{The Prestige} we infer that, for every $\xi \in \Z^N$,
		\small{
		\begin{align*}
			|\widehat{u}(t^{\star}, \xi)| &=  e^{-|\xi|} \ds \int_{0}^{t^{\star}} \exp  \left[\left(C_{2}(t^{\star}) - C_{2}(s) \right) |\xi|^{2} + \ds \sum_{j = 1}^{N} \left(C_{1, j}(t^{\star}) - C_{1, j}(s) \right) \xi_{j}  \right] ds  \\
			&\geq e^{-|\xi|} \ds \int_{t^{\star} - \delta}^{t^{\star} - \delta/2} \exp  \left[\left(C_{2}(t^{\star}) - C_{2}(s) \right) |\xi|^{2} + \ds \sum_{j = 1}^{N} \left(C_{1, j}(t^{\star}) - C_{1, j}(s) \right) \xi_{j}  \right] ds ds  \\
			&\geq e^{-|\xi|} \ds \int_{t^{\star} - \delta}^{t^{\star} - \delta/2} \exp \left[\varepsilon (t^{\star} - s)  |\xi|^{2} +  \ds \sum_{j = 1}^{N} \left(C_{1, j}(t^{\star}) - C_{1, j}(s) \right) \xi_{j} \right]   ds  \\
			&\geq \exp \left({\ds \frac{\varepsilon \delta}{2} |\xi|^{2} - |\xi| } \right) \ds \int_{t^{\star} - \delta}^{t^{\star} - \delta/2} \exp  \left[ \ds \sum_{j = 1}^{N} \left(C_{1, j}(t^{\star}) - C_{1, j}(s) \right) \xi_{j} \right]   ds.
		\end{align*}
	}
	\normalsize

		Furthermore $\left| C_{1_{j}}(t) \right| \leq  \ds \int_{0}^{t} |c_{1, j}(r)|dr \leq T \ds \max_{r \in [0, T]} |c_{1, j}(r)|$. This implies that, for some $M > 0$, 
		\begin{equation*}
			  |C_{1,j}(t)| \leq M, \ \ \ \forall t \in [0,T], \ \ \forall j \in \left\{1, 2, \ldots, N \right\}.
		\end{equation*}
		Hence
		\begin{align*}
			|\widehat{u}(t^{\star}, \xi)| &\geq \exp \left({\ds \frac{\varepsilon \delta}{2} |\xi|^{2} - |\xi| } \right) \ds \int_{t^{\star} - \delta}^{t^{\star} - \delta/2} \exp  \left(-2M |\xi|  \right)  ds \geq \ds \frac{\delta}{2} \exp \left({\ds \frac{\varepsilon \delta}{2} |\xi|^{2} - (2M+1)|\xi|} \right), \ \ \forall \xi \in \Z^N. 
		\end{align*}
		Therefore, there exists $n_{0} \in \N$ large enough so that	
		\begin{equation*}
			|\widehat{u}(t^{\star}, \xi)| \geq \ds \frac{\delta}{2} \exp \left(\ds \frac{\varepsilon \delta}{4} |\xi|^{2} \right), \ \ \ \forall \xi \in \Z^N; \ |\xi| \geq n_{0}.
		\end{equation*}
		Thence the formal solution $u = \ds \sum_{\xi \in \Z^N} \widehat{u}(t, \xi) e^{ix\xi}$ is not even a function when $t = t^{\star}$, ending the proof. 
	\end{proof}
	
	As a consequence of Theorem \eqref{Cries and Whispers},  we have just proved the following
	
	\begin{Teo} \label{American  Beauty}
		Let $P$ be the operator defined in \eqref{Her}. If there exists  $t^{\star} \in [0, T]$ such that
		\begin{equation*}
			\Im (a_{2}(t^{\star})) = c_{2}(t^{\star}) > 0,
		\end{equation*} 
		then \eqref{Schindler's List} is ill-posed in $C^{\omega}$, which implies it is ill-posed in any of the $G^{s}$, $C^{\infty}$ or $H^{r}$ spaces.
	\end{Teo}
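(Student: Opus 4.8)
The plan is to deduce this statement directly from the results already in place, with essentially no new computation required; it is a packaging of Theorem \ref{The Lord of the Rings: The Return of the King}, Theorem \ref{Cries and Whispers} and Corollary \ref{Roma}.

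First I would note that, by the convention \eqref{Indiana Jones and the Last Crusade}, $c_{2}(t^{\star}) = \Im(a_{2}(t^{\star})) > 0$, so the hypothesis of Theorem \ref{The Lord of the Rings: The Return of the King} is met. That theorem then produces data $f \in C([0,T]; C^{\omega}(\T^N))$ and $g \in C^{\omega}(\T^N)$ for which the Cauchy problem \eqref{The Lord of the Rings: The Fellowship of the Ring} associated with the reduced operator $\widetilde{P}$ of \eqref{Raiders of the Lost Ark} has no solution in $C([0,T]; C^{\omega}(\T^N))$. Since \eqref{The Lord of the Rings: The Fellowship of the Ring} is exactly the problem \eqref{Wild Strawberries}, this shows that \eqref{Wild Strawberries} is not well-posed in $C^{\omega} = G^{1}$.

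Next I would apply Theorem \ref{Cries and Whispers} with $s = 1$: it asserts that \eqref{Schindler's List} is well-posed in $G^{1}$ if and only if \eqref{Wild Strawberries} is well-posed in $G^{1}$. As the latter fails, so does the former, i.e. \eqref{Schindler's List} is ill-posed in $C^{\omega}$. Finally, Corollary \ref{Roma} upgrades this at once to ill-posedness in every $G^{s}$ with $s > 1$, in $C^{\infty}$, and in $H^{r}$ for all $r \in \R$, which is the assertion.

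There is no real obstacle here. The one point that could cause hesitation is that the explicit bad data built for $\widetilde{P}$ need not be bad data for the original $P$; but this is never needed, because Theorem \ref{Cries and Whispers} transfers well-posedness (equivalently, ill-posedness) abstractly through the invertible map $\Psi$ of \eqref{Sicario}, which preserves the analytic class. So the entire proof reduces to quoting these three results in the order above.
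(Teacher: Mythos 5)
Your proposal matches the paper's argument exactly: the paper proves Theorem \ref{The Lord of the Rings: The Return of the King} for the reduced operator $\widetilde{P}$, then states Theorem \ref{American  Beauty} as an immediate consequence via Theorem \ref{Cries and Whispers}, with the propagation to $G^{s}$, $C^{\infty}$, and $H^{r}$ coming from Corollary \ref{Roma}. Your remark that the transfer of ill-posedness is abstract (through the invertible $\Psi$) rather than through explicit bad data is a correct reading of why Theorem \ref{Cries and Whispers} suffices.
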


	\begin{Cor} \label{The Seventh Seal}  
		In order for \eqref{Schindler's List} to be well-posed in any of the spaces aforementioned it is necessary that $\Im (a_{2}(t)) = c_{2}(t) \leq 0, \  \forall t \in [0,t].$
	\end{Cor}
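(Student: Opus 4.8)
The plan is to obtain this statement as the immediate contrapositive of Theorem \ref{American Beauty}, so essentially no new work is required --- all the substance already resides in the construction carried out for Theorem \ref{The Lord of the Rings: The Return of the King}. Concretely, I would argue by contradiction: suppose that \eqref{Schindler's List} is well-posed in at least one of the spaces $H^{r}(\T^{N})$, $C^{\infty}(\T^{N})$, $G^{s}(\T^{N})$ ($s>1$) or $C^{\omega}(\T^{N})$, and yet there exists some $t^{\star}\in[0,T]$ with $c_{2}(t^{\star})=\Im(a_{2}(t^{\star}))>0$.

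Given such a $t^{\star}$, Theorem \ref{American Beauty} applies and tells us that \eqref{Schindler's List} is ill-posed in $C^{\omega}(\T^{N})$; by Corollary \ref{Roma} (equivalently, by the chain of implications in Proposition \ref{The Graduate}), ill-posedness in $C^{\omega}(\T^{N})$ propagates to ill-posedness in every $G^{s}(\T^{N})$, in $C^{\infty}(\T^{N})$, and in every $H^{r}(\T^{N})$. This contradicts the assumed well-posedness in whichever of those spaces we started with. Hence no such $t^{\star}$ can exist, i.e.\ $c_{2}(t)\le 0$ for every $t\in[0,T]$, which is precisely the claim.

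There is no genuine obstacle here: the only conceptual ingredient is recalling that Theorem \ref{American Beauty} (via the normal-form reduction of Theorem \ref{Cries and Whispers} and the explicit lower bound $|\widehat{u}(t^{\star},\xi)|\ge \tfrac{\delta}{2}\exp(\tfrac{\varepsilon\delta}{4}|\xi|^{2})$ for large $|\xi|$) already exhibits data $f,g$ in the analytic class for which the unique formal candidate solution fails to be even an $L^{2}$ function at time $t^{\star}$, and that this failure is inherited by the coarser scales through Corollary \ref{Roma}. Thus the corollary is simply a restatement of the necessity direction, and the proof is one short paragraph invoking Theorem \ref{American Beauty} and Corollary \ref{Roma}.
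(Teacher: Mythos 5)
Your proof is correct and is exactly the intended argument: the corollary is the contrapositive of Theorem \ref{American  Beauty} (combined with Corollary \ref{Roma}), which is why the paper states it without a separate proof. Nothing more is needed.
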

	
Right below we show that such condition is far from sufficient. 
	
	\begin{Teo} \label{La La Land}
		Suppose the existence of $t^{\star} \in [0,T]$ and $\delta > 0$ such that
		\begin{itemize}[wide,  labelindent=0pt ]
			\item $c_{2} \equiv 0$ in $[t^{\star}, t^{\star} + \delta]$; 
			\item $c_{1, j} \not\equiv 0$ in $[t^{\star}, t^{\star} + \delta]$, for some $j \in \left\{1, 2, \ldots N \right\}$. 
		\end{itemize}  
		Then there exist  $f(t,x) \in C \left([0, T]; C^{\omega}(\T^N) \right)$ and $g(x) \in C^{\omega} (\T^N)$ for which there is no solution in $C\left([0, T]; C^{\omega}(\T^N) \right)$ for \eqref{The Lord of the Rings: The Fellowship of the Ring}.
	\end{Teo}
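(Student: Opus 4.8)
The plan is to run the same scheme used in the proof of Theorem \ref{The Lord of the Rings: The Return of the King}: I will produce real-analytic data $f,g$ for which the unique possible solution of \eqref{The Lord of the Rings: The Fellowship of the Ring} (it is unique by Remark \ref{Requiem for a Dream}, with Fourier coefficients forced by formula \eqref{Forrest Gump}) has Fourier coefficients that blow up at a suitable time $t^{\star\star}\in[t^\star,t^\star+\delta]$, hence cannot define an element of $C\left([0,T];C^\omega(\T^N)\right)$ — in fact not even a distribution on $\T^N$. Since $c_2\equiv0$ on $[t^\star,t^\star+\delta]$, the $|\xi|^2$-term in the exponent of \eqref{Forrest Gump} will be harmless provided I keep both the integration variable $s$ and the evaluation time inside that interval; the amplification must therefore come from the first-order term $\sum_j(C_{1,j}(t)-C_{1,j}(s))\xi_j$, which is exactly where the hypothesis $c_{1,j}\not\equiv0$ will be used.

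First I would fix the evaluation time and the frequency direction. Since $C_{1,j}'=c_{1,j}\not\equiv0$ on $[t^\star,t^\star+\delta]$, the function $C_{1,j}$ is not constant there, so I may pick $a<b$ in $[t^\star,t^\star+\delta]$ with $C_{1,j}(a)\neq C_{1,j}(b)$; set $t^{\star\star}:=b$, $\mu:=\tfrac12\left|C_{1,j}(b)-C_{1,j}(a)\right|>0$ and $\sigma:=\mathrm{sgn}\left(C_{1,j}(b)-C_{1,j}(a)\right)\in\{\pm1\}$. By continuity of $C_{1,j}$ there is a subinterval $I\subset[t^\star,b]$ of positive length (a one- or two-sided neighbourhood of $a$) on which $\sigma\left(C_{1,j}(b)-C_{1,j}(s)\right)\geq\mu$ for every $s\in I$. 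Then I would take $g\equiv0$ and $f(t,x):=f(x):=\sum_{\xi\in\Z^N}e^{-\mu|\xi|/2}\,e^{ix\xi}$; by Proposition \ref{Dr. Strangelove} this $f$ is in $C^\omega(\T^N)$, so $f\in C\left([0,T];C^\omega(\T^N)\right)$ and $g\in C^\omega(\T^N)$.

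Next, supposing that \eqref{The Lord of the Rings: The Fellowship of the Ring} had a solution $u\in C\left([0,T];C^\omega(\T^N)\right)\subset C\left([0,T];H^r(\T^N)\right)$, I would evaluate \eqref{Forrest Gump} at $t=t^{\star\star}$ and at $\xi=\xi_n:=\sigma n\,e_j$, where $e_j$ is the $j$-th canonical vector of $\Z^N$ (so $|\xi_n|=n$ and only the $j$-th summand survives). Since every $C_2,C_{1,k}$ is real and $\widehat{f}(\xi_n)=e^{-\mu n/2}>0$, the quantity $|\widehat u(t^{\star\star},\xi_n)|$ equals $e^{-\mu n/2}$ times the integral over $[0,t^{\star\star}]$ of a positive integrand, so I may restrict that integral to $s\in I$; for such $s$ one has $[s,t^{\star\star}]=[s,b]\subseteq[t^\star,t^\star+\delta]$, whence $C_2(b)-C_2(s)=\int_s^b c_2(r)\,dr=0$, while the first-order part of the exponent is $\sigma\left(C_{1,j}(b)-C_{1,j}(s)\right)n\geq\mu n$. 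This gives $|\widehat u(t^{\star\star},\xi_n)|\geq e^{-\mu n/2}\,|I|\,e^{\mu n}=|I|\,e^{\mu n/2}\to+\infty$ as $n\to\infty$, contradicting $u(t^{\star\star},\cdot)\in C^\omega(\T^N)$ (indeed, contradicting $u(t^{\star\star},\cdot)\in\D'(\T^N)$), and the theorem follows.

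The step I expect to be the genuine subtlety — and the reason the argument of Theorem \ref{The Lord of the Rings: The Return of the King} does not transplant verbatim — is the choice of the decay rate of $f$. There the $e^{\varepsilon\delta|\xi|^2/4}$-type growth overwhelms any fixed exponential decay of $\widehat f$, whereas here the gain from the first-order term is only of size $e^{\mu|\xi|}$ with $\mu$ a \emph{fixed} constant dictated by the coefficients, and it cannot be enhanced by rescaling the frequency, since the ratio ``exponent over $|\xi|$'' is scale invariant and bounded. The remedy is that $f$ is allowed to depend on the coefficients, so one picks its decay rate to be $\mu/2<\mu$; it stays real-analytic by Proposition \ref{Dr. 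Strangelove}. The remaining points are mere bookkeeping: choosing the sign $\sigma$ of the frequency so that the first-order exponent is positive, and arranging that $I$ lies inside $[t^\star,t^\star+\delta]$ so that the $c_2$-contribution is exactly $0$ — which forces $I$ to be a one-sided neighbourhood of $a$ in the boundary case $a=t^\star$.
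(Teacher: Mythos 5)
Your proof is correct and runs on essentially the same idea as the paper's: take $g\equiv 0$ and a real-analytic source whose Fourier coefficients decay at a rate strictly slower than the exponential amplification $e^{\mu|\xi|}$ produced by the first-order imaginary coefficient over the interval where $c_2$ vanishes, then evaluate \eqref{Forrest Gump} along a suitably signed frequency direction $\sigma n\,e_j$. The only difference is cosmetic: you work directly with the non-constancy of the primitive $C_{1,j}$ and pick the sign $\sigma$ and the subinterval $I$ accordingly, whereas the paper first shrinks $[t^\star,t^\star+\delta]$ to a subinterval on which $c_{1,j}$ has constant sign bounded away from zero and then uses the right endpoint; both devices extract the same uniform lower bound on the exponent.
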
	
	
	\begin{proof}
		With no loss of generality we may assume that $j = 1$ and (by continuity) that $c_{1, 1}$ is either strictly positive or negative in $[t^{\star}, t^{\star} + \delta]$. Consider the case where $c_{1, 1}$ is positive (the other is completely analogous); then there exists $\varsigma > 0$ such that $c_{1, 1}(t) \geq \varsigma$,  for every $ t \in [t^{\star}, t^{\star} + \delta].$ Define then 
		\begin{equation*}
			f(t,x) = - i \ds \sum_{\eta \in \Z_{+}} e^{- \frac{\delta \varsigma \eta}{4}} e^{i x_{1} \eta},  \ \ \   g(x) \equiv 0.  
		\end{equation*}
		Applying \eqref{Forrest Gump}, one obtains
		\small{
		\begin{equation*}   
			\widehat{u}(t, \eta, 0) = e^{- \frac{\delta \varsigma \eta}{4}} \ds \int_{0}^{t} \exp  \left[\left(C_{2}(t) - C_{2}(s) \right) \eta^{2} + \left(C_{1, 1}(t) - C_{1, 1}(s) \right) \eta  \right]  ds, \ \ \forall t \in [0, T], \ \forall \eta \in \Z_{+},
		\end{equation*}
		}
		\normalsize
		where $(t, \eta, 0)$ represents $(t, \eta, \underbrace{0, \ldots, 0}_{\in \Z^{N-1}})$. Therefore 
		\small{
		\begin{align*}
			|\widehat{u}(t^{\star} + \delta, \eta, 0)| 
			&\geq e^{- \frac{\delta \varsigma \eta}{4} } \ds \int_{t^{\star}}^{t^{\star} + \delta} \exp \left[\left( \underbrace{\ds \int_{s}^{t^{\star} + \delta} c_{2}(r) dr}_{\equiv 0} \right) \eta^{2} + \left(\ds \int_{s}^{t^{\star} + \delta} c_{1, 1}(r) dr \right) \eta  \right]   ds \\
			&=  e^{- \frac{\delta \varsigma \eta}{4} } \ds \int_{t^{\star}}^{t^{\star} + \delta} \exp \left[ \left(\ds \int_{s}^{t^{\star} + \delta} c_{1, 1}(r) dr \right) \eta  \right]   ds \geq  e^{- \frac{\delta \varsigma \eta}{4} } \ds \int_{t^{\star}}^{t^{\star} + \delta} \exp \left[ \left(\ds \int_{s}^{t^{\star} + \delta} \varsigma  dr \right) \eta  \right]   ds  \\
			&\geq e^{- \frac{\delta \varsigma \eta}{4} } \ds \int_{t^{\star}}^{t^{\star} + \delta} \exp \left[\varsigma \left(t^{\star} + \delta - s  \right) \eta  \right]   ds \geq e^{- \frac{\delta \varsigma \eta}{4} } \ds \int_{t^{\star}}^{t^{\star} + \frac{\delta}{2}} \exp \left[\varsigma \left(t^{\star} + \delta - s  \right) \eta  \right]   ds \\ 
			&\geq e^{- \frac{\delta \varsigma \eta}{4} } \ds \int_{t^{\star}}^{t^{\star} + \frac{\delta}{2}} e^{\frac{\varsigma  \delta \eta}{2}}     ds \geq \ds \frac{\delta e^{\frac{\delta \varsigma \eta}{4} }}{2} , \ \ \  \   \forall \eta \in \Z_{+}.
		\end{align*}
		}
		\normalsize
		Since $\delta, \varsigma$ and $\eta$ are non-negative numbers,  the formal solution $u = \ds \sum_{\xi \in \Z^N} \widehat{u}(t, \xi) e^{ix\xi}$ is not even a function when $t = t^{\star} + \delta$, which  closes the proof. 
	\end{proof}
	
	\begin{Cor} 
		Let $P$ be the operator defined in \eqref{Her}. If there exist  $t^{\star} \in [0, T]$ and $\delta > 0$ such that
		\begin{equation*}
			\begin{gathered}
				\Im (a_{2}) = c_{2} \equiv 0 \ \text{in} \ [t^{\star}, t^{\star} + \delta]; \ \ \  \Im (a_{1, j}) = c_{1, j} \not\equiv 0 \ \text{in} \ [t^{\star}, t^{\star} + \delta], \ \text{for some $j \in \left\{1, 2, \ldots N \right\}$}.
			\end{gathered}
		\end{equation*}
		Then \eqref{Schindler's List} is ill-posed in $C^{\omega}$, which implies it is ill-posed in any of the $G^{s}$, $C^{\infty}$ or $H^{r}$ spaces. 
	\end{Cor}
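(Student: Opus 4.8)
The plan is to obtain this statement as an immediate consequence of Theorem \ref{La La Land}, the reduction to normal form in Theorem \ref{Cries and Whispers}, and the chain of implications in Corollary \ref{Roma}. First I would unwind the notation: by \eqref{Indiana Jones and the Last Crusade} the quantities $c_2 = \Im(a_2)$ and $c_{1,j} = \Im(a_{1,j})$ appearing in the hypothesis are exactly the functions featuring in the definition of the auxiliary operator $\widetilde{P}$ in \eqref{Raiders of the Lost Ark}; hence the assumptions that $c_2 \equiv 0$ on $[t^\star, t^\star + \delta]$ and $c_{1,j} \not\equiv 0$ on $[t^\star, t^\star + \delta]$ for some $j$ are precisely the hypotheses of Theorem \ref{La La Land}.

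Next I would apply Theorem \ref{La La Land} to produce $f \in C([0,T]; C^{\omega}(\T^N))$ and $g \in C^{\omega}(\T^N)$ for which the Cauchy problem \eqref{The Lord of the Rings: The Fellowship of the Ring} associated to $\widetilde{P}$ admits no solution in $C([0,T]; C^{\omega}(\T^N))$; equivalently, \eqref{Wild Strawberries} fails to be well-posed in $C^{\omega} = G^{1}$. I would then invoke Theorem \ref{Cries and Whispers}, which states that \eqref{Schindler's List} is well-posed in $G^{s}$ (and in particular for $s = 1$) if and only if the corresponding problem \eqref{Wild Strawberries} for $\widetilde{P}$ is. Since the latter just failed, \eqref{Schindler's List} cannot be well-posed in $C^{\omega}$.

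Finally, Corollary \ref{Roma} upgrades the failure of well-posedness in $C^{\omega} = G^{1}$ to the failure of well-posedness in every $G^{s}$, in $C^{\infty}$, and in every $H^{r}$, which is exactly the claimed conclusion. I do not anticipate any genuine obstacle here: the corollary is purely a matter of assembling three already-established results. The only points I would be careful to verify explicitly are that the splitting $a_k = b_k + i c_k$ makes the hypotheses of Theorem \ref{La La Land} match verbatim, and that ``ill-posedness'' is being used in the sense of the failure of the existence clause of well-posedness, which is precisely what Theorem \ref{La La Land} delivers (the formal solution ceases to be even a function at $t = t^\star + \delta$). With these checks in place the proof is complete.
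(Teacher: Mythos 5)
Your proposal is correct and takes exactly the route the paper intends: apply Theorem \ref{La La Land} to the normal form $\widetilde{P}$, transfer the $C^{\omega}$ ill-posedness back to $P$ via the equivalence in Theorem \ref{Cries and Whispers}, and then invoke Corollary \ref{Roma} to propagate to $G^{s}$, $C^{\infty}$ and $H^{r}$. The paper leaves the proof of this corollary implicit, but it is precisely the assembly of the three results you cite, and your checks on the matching of hypotheses and the meaning of ill-posedness are the right ones to make.
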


	\section{Well-posedness for a family of operators} \label{Gladiator} 
	
	So far we have only found families of operators for which problem \eqref{Schindler's List} is ill-posed.  In this subsection we prove well-posedness in all  aforementioned settings for two different cases, with the most important of them being the situation where $\Im (a_{2})$ is strictly negative everywhere.
	
	\begin{Obs}
	Since we could replace $P$ by $iP$ with no loss of generality in \eqref{Schindler's List}, we would have an operator in the form $\del_{t} + Q(t, D_{x})$, and there exist $C, R > 0$ such that 
	\begin{equation*}
\Re(Q(t, \xi)) \geq C |\xi|^{2}, \ \ \  |\xi| \geq R. 
	\end{equation*}
	This means that $Q$ is a strongly elliptic symbol and hence that $iP$ is a parabolic operator.  
	\end{Obs} 
	
	Recall that our only candidate for solution has its Fourier coefficients given in \eqref{Sunset Boulevard}.

	\begin{Teo} \label{Rear Window} 
		Let $P$ be the operator defined in \eqref{Her} and assume that
		\begin{equation*}
			\Im (a_{2})(t) = c_{2}(t) < 0,  \  \text{for every  $t \in [0, T]$}.
		\end{equation*}
		Then \eqref{Schindler's List} is well-posed in $H^{r}$.  
	\end{Teo}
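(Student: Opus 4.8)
The plan is to read off the (unique, by Remark~\ref{Requiem for a Dream}) candidate solution $u=\sum_{\xi\in\Z^N}\widehat u(t,\xi)e^{ix\xi}$ from the explicit formula \eqref{Sunset Boulevard} and to verify the \emph{a priori} estimate \eqref{Zero Dark Thirty} with $\rho=0$, after which membership and continuity will follow. Write $a_k=b_k+ic_k$ and $A_k=B_k+iC_k$ as in \eqref{Indiana Jones and the Last Crusade}--\eqref{Goodfellas}. A direct computation of real parts (using $-i(B+iC)=C-iB$ and $i(B+iC)=-C+iB$) shows that the modulus of the first summand in \eqref{Sunset Boulevard} equals $|\widehat g(\xi)|\,\exp\{C_2(t)|\xi|^2+\sum_j C_{1,j}(t)\xi_j+C_0(t)\}$, and that the modulus of the exponential inside the integral equals $\exp\{(C_2(t)-C_2(s))|\xi|^2+\sum_j(C_{1,j}(t)-C_{1,j}(s))\xi_j+(C_0(t)-C_0(s))\}$.

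Now I bound these exponents uniformly in $\xi$ and $t$. Since $c_2$ is continuous and strictly negative on the compact interval $[0,T]$, there is $\varepsilon>0$ with $c_2\le-\varepsilon$, hence $C_2(t)-C_2(s)=\int_s^t c_2\le-\varepsilon(t-s)\le0$ for $0\le s\le t\le T$, and likewise $C_2(t)\le-\varepsilon t$ because $C_k(0)=0$. Putting $m_1:=\max_j\|c_{1,j}\|_\infty$ and $m_0:=\|c_0\|_\infty$ we get $|C_{1,j}(t)-C_{1,j}(s)|\le(t-s)m_1$ and $|C_0(t)-C_0(s)|\le(t-s)m_0$, so by Cauchy--Schwarz in $j$ the integrand exponent is at most $(t-s)\big(-\varepsilon|\xi|^2+\sqrt N\,m_1|\xi|+m_0\big)$. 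The bracket is a downward parabola in $|\xi|$, hence bounded above by $L:=m_0+Nm_1^2/(4\varepsilon)\ge0$; therefore the whole expression is $\le0$ when the bracket is negative and $\le TL$ otherwise, i.e. $\le TL=:K$ in all cases (the same bound, with $t$ in place of $t-s$, controls the first exponent). Consequently, using Cauchy--Schwarz in $s$,
\begin{equation*}
|\widehat u(t,\xi)|\le e^{K}|\widehat g(\xi)|+e^{K}\int_0^t|\widehat f(s,\xi)|\,ds\le e^{K}|\widehat g(\xi)|+e^{K}\sqrt T\Big(\int_0^T|\widehat f(s,\xi)|^2\,ds\Big)^{1/2},
\end{equation*}
which is exactly \eqref{Zero Dark Thirty} with $\rho=0$ and $C=e^{K}\max\{1,\sqrt T\}$.

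To conclude: squaring, multiplying by $(1+|\xi|)^{2r}$, summing over $\xi$ and applying Tonelli gives $\sup_{t\in[0,T]}\|u(t)\|_{H^r(\T^N)}^2\le 2e^{2K}\big(\|g\|_{H^r(\T^N)}^2+T\int_0^T\|f(s)\|_{H^r(\T^N)}^2\,ds\big)<\infty$, so $u(t,\cdot)\in H^r(\T^N)$ for every $t$. For continuity in $H^r(\T^N)$ note that each $\widehat u(\cdot,\xi)$ is continuous on $[0,T]$ (it is assembled from the $C^1$ primitives $A_k$ and from $s\mapsto\widehat f(s,\xi)$, which is continuous since $f\in C([0,T];H^r(\T^N))$), while the displayed bound furnishes a $t$-independent, $\Z^N$-summable majorant of the form $C'\big(|\widehat g(\xi)|^2+\int_0^T|\widehat f(s,\xi)|^2ds\big)(1+|\xi|)^{2r}$ for $|\widehat u(t,\xi)-\widehat u(t_0,\xi)|^2(1+|\xi|)^{2r}$; the dominated convergence theorem over $\Z^N$ then yields $\|u(t)-u(t_0)\|_{H^r(\T^N)}\to0$ as $t\to t_0$, so $u\in C([0,T];H^r(\T^N))$. (Alternatively, invoke Lemma~\ref{American Gangster}\,I) to place $u$ in $C([0,T];H^{r-2}(\T^N))$ and then run this same dominated-convergence argument to upgrade to $H^r$.) Finally, since the $\widehat u(t,\xi)$ solve the ODEs \eqref{Los Angeles} with $\widehat u(0,\xi)=\widehat g(\xi)$, Theorem~\ref{Chinatown} together with Remarks~\ref{The Pianist} and \ref{Munich} give $Pu=f$ in $D'((0,T);H^{r-2}(\T^N))$ and $u(0,\cdot)=g$; uniqueness is Remark~\ref{Requiem for a Dream}. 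Hence \eqref{Schindler's List} is well-posed in $H^r$ with no loss.

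The main obstacle is precisely the uniformity of the bound on the propagator exponentials: it is needed simultaneously in $\xi$ and in $t$ \emph{down to} $t=0$, where estimating the lower-order primitives $C_{1,j}$ and $C_0$ by constants is too crude; the remedy is to keep the $(t-s)$ (resp. $t$) factor and absorb $\sqrt N\,m_1|\xi|$ into $\varepsilon|\xi|^2$ via the elementary inequality $-\varepsilon\tau r^2+\beta\tau r\le\beta^2\tau/(4\varepsilon)\le\beta^2 T/(4\varepsilon)$. Everything after that estimate is routine summation and the standard Hilbert-space dominated-convergence argument; no loss of derivatives is incurred because the argument never differentiates $\widehat u$ in $t$.
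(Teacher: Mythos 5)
Your estimate of the propagator exponentials is essentially identical to the paper's (continuity $+$ compactness to get $c_2\le-\varepsilon$, $|c_{1,j}|,|c_0|\le M$, keep the $(t-s)$ factor, and absorb the linear term in $|\xi|$ into $-\varepsilon|\xi|^2$), and the resulting inequality is exactly the paper's \eqref{Mission: Impossible – Ghost Protocol}, i.e.\ \eqref{Zero Dark Thirty} with $\rho=0$. Where you genuinely diverge is the final step of upgrading from mere membership of $u(t,\cdot)$ in $H^r$ to continuity of $t\mapsto u(t,\cdot)$ in $H^r$: the paper first invokes Lemma~\ref{American Gangster} (which only gives $u\in C([0,T];H^{r-2})$, the loss of $2$ coming from the Lipschitz estimate derived via the ODE), and then upgrades to $H^r$ by approximating $(f,g)$ by $C^\infty$ data, producing smooth solutions $u_j$, and showing $\{u_j\}$ is Cauchy in $C([0,T];H^r)$ via the uniform energy estimate, identifying the limit with $u$ by uniqueness. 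You instead prove $H^r$-continuity directly: each $\widehat u(\cdot,\xi)$ is continuous (as an integral of continuous functions), and the uniform $t$-independent bound $|\widehat u(t,\xi)|^2(1+|\xi|)^{2r}\le C'\big(|\widehat g(\xi)|^2+\int_0^T|\widehat f(s,\xi)|^2\,ds\big)(1+|\xi|)^{2r}$ furnishes a summable majorant over $\Z^N$, so dominated convergence for the counting measure gives $\|u(t)-u(t_0)\|_{H^r}\to0$. Your route is more elementary and self-contained: it sidesteps both the loss of $2$ derivatives in Lemma~\ref{American Gangster} and the density/Cauchy-sequence argument. The paper's route has the structural advantage of reusing Lemma~\ref{American Gangster} (which it needs elsewhere) and of packaging the upgrade in a form that works verbatim whenever one has the corresponding energy estimate, but for this particular theorem your dominated-convergence argument is cleaner. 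Both proofs are correct.
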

	
	\begin{proof}
		Applying \eqref{Goodfellas} and \eqref{Sunset Boulevard}, it follows immediately that 
		\small{
			\begin{align} 
				|\widehat{u}(t, \xi)| &\leq |\widehat{g}(\xi)| \exp  \left(C_{2}(t) |\xi|^{2} + \ds \sum_{j = 1}^{N} C_{1, j}(t) \xi_{j} +  C_{0}(t) \right)  \nonumber + \\
				&+ \ds \int_{0}^{t} |\widehat{f}(s, \xi)|   \exp  \left[\left(C_{2}(t) - C_{2}(s) \right) |\xi|^{2} + \ds \sum_{j = 1}^{N} \left(C_{1, j}(t) - C_{1, j}(s) \right) \xi_{j} +\left(C_{0}(t) - C_{0}(s) \right) \right]   ds. \label{The Untouchables}
			\end{align}
		}
		\normalsize
		Using the hypothesis, the continuity of the coefficients and the compactness of $[0, T]$, we deduce the existence of $\varepsilon, M > 0$ such that 
		\begin{equation*}
			c_{2}(t) \leq - \varepsilon,  \ \ |c_{1, j}(t)| \leq M, \ \ |c_{0}(t)| \leq M, \ \ \ \ \ \forall j \in \left\{1, 2, \ldots, N \right\}, \ \ \forall t \in [0, T].
		\end{equation*}
		Hence for every $0 \leq s \leq  t \leq T$, we have
		\begin{equation} \label{Scarface}
			\begin{split}
				C_{2}(t) - C_{2}(s) \leq - \varepsilon (t-s), \ \ C_{1, j}(t) - C_{1, j}(s) \leq M (t -s),\ \ C_{0}(t) - C_{0}(s) \leq  M (t -s).
			\end{split}
		\end{equation}
		
		Associating \eqref{The Untouchables} to \eqref{Scarface}, one obtains
		\small{
		\begin{equation} \label{Le Scaphandre et le Papillon}
			\begin{split}
				|\widehat{u}(t, \xi)| \leq e^{MT} \left[|\widehat{g}(\xi)| \exp \left[t \left(- \varepsilon |\xi|^{2} + MN |\xi| \right)  \right]  + \ds \int_{0}^{t} |\widehat{f}(s, \xi)| \exp \left[(t-s) \left(- \varepsilon |\xi|^{2} + MN |\xi| \right) \right]  ds \right].
			\end{split}
		\end{equation}
		}
		\normalsize
		Since  $\varepsilon$ is positive,  there exists $L > 0$ such that 
		\begin{equation} \label{Marriage Story} 
			\left(-\varepsilon |\xi|^{2} + MN |\xi| \right) \leq L, \ \ \forall \xi \in \Z^N.
		\end{equation}
		By applying \eqref{Le Scaphandre et le Papillon}, \eqref{Marriage Story} and Hölder's inequality, we infer that 
		\small{
		\begin{align}
			|\widehat{u}(t, \xi)| &\leq e^{MT} \left[|\widehat{g}(\xi)| e^{LT}  + \ds \int_{0}^{t} |\widehat{f}(s, \xi)| e^{L(t-s)}  ds \right] \leq e^{(L+M)T} \left[|\widehat{g}(\xi)|   + \ds \int_{0}^{t} |\widehat{f}(s, \xi)|   ds \right] \nonumber \\
			&  \leq C \left[|\widehat{g}(\xi)| + \left(\ds \int_{0}^{T} |\widehat{f}(s, \xi)|^{2}  ds \right)^{1/2} \right],  \ \ \forall t \in [0, T], \ \forall \xi \in \Z^{N}, \label{Mission: Impossible – Ghost Protocol}
		\end{align}
		}
		\normalsize
		for some $C > 0$. By Lemma \ref{American Gangster}, $u \in C\left([0, T]; H^{r - 2}(\T^N) \right)$. 
		
		We claim that $u$ is actually  an element of  $C\left([0, T]; H^{r}(\T^N) \right)$, a fact that will be proved through an  approximation argument.  Fix  $f \in C([0,T];H^{r}(\T^{N}))$, $g \in H^{r}(\T^{r})$ and let $\{f_j\}_{j \in \N} \subset C([0,T];C^{\infty}(\T^{N}))$, $\{g_j\}_{j \in \N} \in C^{\infty}(\T^{N})$ be sequences such that
		\begin{equation*}
		\lim_{j \to \infty} f_j = f \,\, \text{in} \,\, C([0,T];H^{r}(\T^{n})), \quad \lim_{j \to \infty} g_j = g \,\, \text{in} \,\, H^{r}(\T^{N}).
		\end{equation*} 
		It follows from \eqref{Mission: Impossible – Ghost Protocol} and Proposition \ref{The Graduate} that the Cauchy problem \eqref{Schindler's List} with initial data $f_j$ and $g_j$ admits a unique solution $u_{j} \in C([0,T]; C^{\infty}(\T^{N}))$.

		Hence it is a consequence  of Tonelli's theorem the existence of  ${C}_{1} > 0$ such that, for any $\sigma \in \R$,
		\small{
		\begin{equation*}
		\|u_{j}(t)\|^{2}_{H^{\sigma}(\T^{N})} \leq C_{1} \left( \|g_j\|^{2}_{H^{\sigma}(\T^{N})} + \int_{0}^{T} \|f_{j}(s)\|^{2}_{H^{\sigma}(\T^{N})} ds \right), \quad t \in [0,T].
		\end{equation*}
		}
		\normalsize
		Since $u_j - u_{k}$ is the solution of the problem \eqref{Schindler's List}, with data $f_j - f_k$ and $g_j - g_k$, we obtain
		\small{
		\begin{equation*}
		\left\| (u_{j}-u_k)(t)\right\|^{2}_{H^{\sigma}(\T^{N})} \leq C_{1} \left( \left\|g_j-g_k\right\|^{2}_{H^{\sigma}(\T^{N})} + \int_{0}^{T} \left\| (f_{j} -f_{k})(s)\right\|^{2}_{H^{\sigma}(\T^{N})} ds \right), \ \ \forall t \in [0,T].
		\end{equation*}
		}
		\normalsize
		In particular, $\{u_j\}_{j \in \N}$ is a Cauchy sequence in $C([0,T]; H^{r}(\T^{N}))$; let $v$ be the limit of $\{u_j\}_{j \in \N}$. Then $v \in C([0,T]; H^{r}(\T^{N}))$ and solves \eqref{Schindler's List}. From the uniqueness of the solutions one concludes that $v = u$, which finalizes the proof.
	\end{proof}

	\begin{Cor} \label{Lincoln}
		Consider $P$ the operator described in \eqref{Her} and suppose that 
		\begin{equation*}
			\Im (a_{2})(t) = c_{2}(t) < 0,  \  \text{for every  $t \in [0, T]$}.
		\end{equation*}
		In this case, we obtain the following properties:
		\begin{itemize}[leftmargin=*]
			\item Given $r \in \R$,  $f \in C \left([0,T]; H^{r}(\T^N) \right)$ and $g \in H^{r}(\T^N)$, there exists a unique $u \in C \left([0,T]; H^{r}(\T^N) \right) \cap C^{1} \left([0,T]; H^{r-2}(\T^N) \right) $ which solves \eqref{Schindler's List}.
			\item For every $f \in C \left([0,T]; C^{\infty}(\T^N) \right)$ and $g \in C^{\infty}(\T^N)$, there exists a unique $u \in C^{1} \left([0,T]; C^{\infty}(\T^N) \right)$ which solves \eqref{Schindler's List}.
			\item Given $s \geq 1$,  $f \in C \left([0,T]; G^{s}(\T^N) \right)$ and $g \in G^{s}(\T^N)$, there exists a unique $u \in C^{1} \left([0,T]; G^{s}(\T^N) \right) $ which solves \eqref{Schindler's List}.
		\end{itemize}
	\end{Cor}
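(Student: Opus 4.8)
The plan is to read off all three assertions from results already in hand, since under the standing hypothesis $c_2 < 0$ on $[0,T]$ the operator $P$ in \eqref{Her} is parabolic and the substantive work has already been carried out in Theorem~\ref{Rear Window}, Proposition~\ref{The Graduate} and Remark~\ref{The Descendants}.

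First I would settle the Sobolev statement. Theorem~\ref{Rear Window} already provides, for every $r\in\R$, $f\in C([0,T];H^r(\T^N))$ and $g\in H^r(\T^N)$, a solution $u\in C([0,T];H^r(\T^N))$ of \eqref{Schindler's List}, which is the unique one in view of Remark~\ref{Requiem for a Dream}. To obtain the extra regularity in time I would isolate the time derivative in the equation,
\[
D_t u \;=\; a_2(t)\Delta_x u \;-\;\sum_{j=1}^{N} a_{1,j}(t)D_{x_j}u \;-\; a_0(t)u \;+\; f,
\]
and note that $\Delta_x u\in C([0,T];H^{r-2}(\T^N))$ by continuity of $\Delta_x$ on the periodic Sobolev scale, that $D_{x_j}u\in C([0,T];H^{r-1}(\T^N))\subset C([0,T];H^{r-2}(\T^N))$, that $u,f\in C([0,T];H^{r}(\T^N))\subset C([0,T];H^{r-2}(\T^N))$, and that multiplication by the continuous scalar functions $a_k(t)$ leaves $C([0,T];H^{r-2}(\T^N))$ invariant. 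Hence the right-hand side, and therefore $D_t u$, belongs to $C([0,T];H^{r-2}(\T^N))$, so $u\in C^1([0,T];H^{r-2}(\T^N))$; this is precisely the first bullet of Remark~\ref{The Descendants} with $\delta=0$.

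For the $C^\infty$ statement I would invoke item~\ref{Back to the Future} of Proposition~\ref{The Graduate} (with $\delta=0$): $H^r$ well-posedness implies $C^\infty$ well-posedness, so for any $f\in C([0,T];C^\infty(\T^N))$ and $g\in C^\infty(\T^N)$ there is a unique $u\in C([0,T];C^\infty(\T^N))$ solving \eqref{Schindler's List}, and the second bullet of Remark~\ref{The Descendants} then upgrades it to $C^1([0,T];C^\infty(\T^N))$. The Gevrey statement is entirely parallel: item~\ref{Back to the Future II} of Proposition~\ref{The Graduate} passes from $C^\infty$ to $G^s$ well-posedness for every $s\geq1$, and the third bullet of Remark~\ref{The Descendants} supplies the $C^1$-in-time regularity. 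Uniqueness in each framework needs no separate argument, since by Remark~\ref{Requiem for a Dream} the Cauchy problem \eqref{Schindler's List} has at most one solution in $D'((0,T);H^r(\T^N))$, hence a fortiori in any of the subspaces under consideration.

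I do not anticipate a genuine obstacle here: the corollary is a bookkeeping consequence of the machinery already developed. The only point demanding mild care is tracking the Sobolev indices when reading $D_t u$ off the equation, which amounts to nothing more than the continuity of $\Delta_x$ and $D_{x_j}$ on the scale $\{H^\tau(\T^N)\}_{\tau\in\R}$ and the invariance of $C([0,T];H^\tau(\T^N))$ under multiplication by elements of $C([0,T];\C)$.
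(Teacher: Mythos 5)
Your proposal is correct and follows exactly the paper's route: the paper's proof of the corollary is the single line ``It is a direct consequence of Theorem~\ref{Rear Window}, Proposition~\ref{The Graduate} and Remark~\ref{The Descendants},'' and you have simply spelled out that combination, including the index bookkeeping behind Remark~\ref{The Descendants} and the appeal to Remark~\ref{Requiem for a Dream} for uniqueness.
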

	
\begin{proof}
It is a direct consequence of Theorem \ref{Rear Window}, Proposition \ref{The Graduate} and Remark \ref{The Descendants} 
\end{proof}
	
	Similarly to previous subsection, we exhibit a class of operators where $\Im (a_{2})$ is not necessarily strictly negative but we still  obtain well-posedness in the Sobolev setting.

	\begin{Teo} \label{First Man}
		Let $P$ be the operator defined in \eqref{Her} and assume that
		\begin{equation*}
			\Im (a_{2})(t) = c_{2}(t) \leq 0  \ \text{and} \ \Im (a_{1, j})(t) = c_{1, j}(t) \equiv 0, \  \text {for every $j \in \left\{1, 2, \ldots, N \right\}$ and  $t \in [0, T]$}.
		\end{equation*}
		Then \eqref{Schindler's List} is well-posed in $H^{r}$.  
	\end{Teo}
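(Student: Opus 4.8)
The plan is to reuse, essentially verbatim, the strategy of the proof of Theorem~\ref{Rear Window}: extract an a~priori bound of the shape \eqref{Zero Dark Thirty} with loss exponent $\rho=0$ directly from the closed formula \eqref{Sunset Boulevard} for the Fourier coefficients of the unique candidate $u$, feed it into Lemma~\ref{American Gangster}, and then upgrade the resulting $C([0,T];H^{r-2}(\T^N))$ regularity to $C([0,T];H^{r}(\T^N))$ by a density/approximation argument. Writing $A_k=B_k+iC_k$ as in \eqref{Goodfellas}, the decisive point is that, under the hypotheses $c_2\le 0$ and $c_{1,j}\equiv 0$, every exponential factor occurring in \eqref{Sunset Boulevard} is bounded by a constant independent of $\xi$: the real part of $-iA_2(t)$ equals $C_2(t)=\int_0^t c_2(r)\,dr\le 0$, so $\exp(C_2(t)|\xi|^2)\le 1$; for $0\le s\le t$ the real part of $i(A_2(s)-A_2(t))$ equals $\int_s^t c_2(r)\,dr\le 0$, so the Duhamel factor is likewise $\le 1$; since $c_{1,j}\equiv 0$ one has $C_{1,j}\equiv 0$, so the first-order exponentials $\exp(-iB_{1,j}(t)\xi_j)$ and $\exp(i(B_{1,j}(s)-B_{1,j}(t))\xi_j)$ have modulus $1$; and the zeroth-order terms contribute a factor $\le e^{M_0}$ with $M_0=\int_0^T|c_0(r)|\,dr$. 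Bounding $\int_0^t|\widehat f(s,\xi)|\,ds$ by the Cauchy--Schwarz inequality then gives $|\widehat u(t,\xi)|\le C\big[\,|\widehat g(\xi)|+(\int_0^T|\widehat f(s,\xi)|^2\,ds)^{1/2}\,\big]$ for all $t\in[0,T]$ and $\xi\in\Z^N$, which is precisely \eqref{Zero Dark Thirty} with $\rho=0$.

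Granting this estimate, part~I) of Lemma~\ref{American Gangster} yields, for $f\in C([0,T];H^r(\T^N))$ and $g\in H^r(\T^N)$, that $u(t,\cdot)\in H^r(\T^N)$ for each $t$ and $u\in C([0,T];H^{r-2}(\T^N))$; together with the uniqueness furnished by Remark~\ref{Requiem for a Dream}, this says that \eqref{Schindler's List} is well-posed in $H^r$ with loss of $2$ derivatives, whence Proposition~\ref{The Graduate} (item~\ref{Back to the Future}) gives well-posedness in $C^\infty$. To remove the two-derivative loss I would then repeat the closing paragraph of the proof of Theorem~\ref{Rear Window}: choose smooth $f_j\to f$ in $C([0,T];H^r(\T^N))$ and $g_j\to g$ in $H^r(\T^N)$ (truncations of Fourier series), let $u_j\in C([0,T];C^\infty(\T^N))$ be the corresponding solutions, square the estimate above and sum against $(1+|\xi|)^{2r}$ using Tonelli's theorem to obtain $\|(u_j-u_k)(t)\|_{H^r(\T^N)}^2\le C_1\big(\|g_j-g_k\|_{H^r(\T^N)}^2+\int_0^T\|(f_j-f_k)(s)\|_{H^r(\T^N)}^2\,ds\big)$, deduce that $\{u_j\}$ is Cauchy in the Banach space $C([0,T];H^r(\T^N))$, and verify that its limit solves \eqref{Schindler's List} and hence coincides with $u$ by uniqueness. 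This gives $u\in C([0,T];H^r(\T^N))$, i.e.\ well-posedness in $H^r$ with no loss.

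The only genuinely new ingredient is the boundedness of the first-order exponential factors, and that is exactly where the hypothesis $c_{1,j}\equiv 0$ is indispensable: if some $c_{1,j}$ failed to vanish identically one would instead meet factors $\exp\big((\int_s^t c_{1,j}(r)\,dr)\xi_j\big)$ growing exponentially in $|\xi|$, which is precisely the mechanism behind the ill-posedness established in Theorem~\ref{La La Land}. Consequently I do not anticipate any real obstacle beyond carrying out this computation carefully; everything else is the same bookkeeping already performed in Section~\ref{Gladiator}.
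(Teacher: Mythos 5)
Your proposal follows the paper's proof essentially verbatim: both start from the closed formula \eqref{Sunset Boulevard}, observe that $C_{1,j}\equiv 0$ kills the first-order exponential factors while $C_2\le 0$ and non-increasing makes the second-order exponentials $\le 1$, absorb the bounded zeroth-order contribution into a constant, and then invoke Lemma~\ref{American Gangster} together with the density/approximation argument at the end of Theorem~\ref{Rear Window} to remove the two-derivative loss. The only cosmetic difference is that you bound the zeroth-order factor by $e^{\int_0^T|c_0|}$ rather than $e^{MT}$ with $M=\sup|c_0|$; the argument is otherwise identical.
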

	
	\begin{proof}
		By proceeding just like it was done in order to obtain \eqref{The Untouchables} we have
		\small{
		\begin{equation*}
			|\widehat{u}(t, \xi)| \leq |\widehat{g}(\xi)| \exp \left\{ C_{2}(t) |\xi|^{2} +  C_{0}(t) \right\}  +  \ds \int_{0}^{t} |\widehat{f}(s, \xi)|   \exp \left\{\left(C_{2}(t) - C_{2}(s) \right) |\xi|^{2} + \left(C_{0}(t) - C_{0}(s) \right)  \right\} ds.
		\end{equation*}
		}
		\normalsize
		Take $M > 0$ such that $|c_{0}(t)| \leq M, \ \forall t \in [0, T]$. Then
		\begin{equation*}
			|\widehat{u}(t, \xi)| \leq e^{MT} \left[ |\widehat{g}(\xi)| e^{C_{2}(t) |\xi|^{2}} +     \ds \int_{0}^{t} |\widehat{f}(s, \xi)|  e^{\left(C_{2}(t) - C_{2}(s) \right) |\xi|^{2}} ds \right]. 
		\end{equation*}
		Since $C_{2}$ is always non-positive and non-increasing, it follows that
		\small{
		\begin{equation*}
			|\widehat{u}(t, \xi)| \leq e^{MT} \left[ |\widehat{g}(\xi)| +  \ds \int_{0}^{t} |\widehat{f}(s, \xi)| ds \right], \ \ \ \forall t \in [0, T], \ \ \forall \xi \in \Z^N.
		\end{equation*}
		}
		\normalsize
		Finally it suffices proceed just like in the end of the proof of Theorem \ref{Rear Window}.
	\end{proof}
	
	\begin{Cor} \label{Bourne Identity} 
		Consider $P$ the operator described in \eqref{Her} and suppose that 
		\begin{equation*}
			\Im (a_{2})(t) = c_{2}(t) \leq 0  \ \text{and} \ \Im (a_{1, j})(t) = c_{1, j}(t) \equiv 0, \  \text {for every $j \in \left\{1, 2, \ldots, N \right\}$ and  $t \in [0, T]$}.
		\end{equation*}
		Then the following statements hold true. 
		
		\begin{itemize}[leftmargin=*]
		\item Given $r \in \R$,  $f \in C \left([0,T]; H^{r}(\T^N) \right)$ and $g \in H^{r}(\T^N)$, there exists a unique $u \in C \left([0,T]; H^{r}(\T^N) \right) \cap C^{1} \left([0,T]; H^{r-2}(\T^N) \right) $ which solves \eqref{Schindler's List}.
		\item For every $f \in C \left([0,T]; C^{\infty}(\T^N) \right)$ and $g \in C^{\infty}(\T^N)$, there exists a unique $u \in C^{1} \left([0,T]; C^{\infty}(\T^N) \right)$ that  solves \eqref{Schindler's List}.
		\item Given $s \geq 1$,  $f \in C \left([0,T]; G^{s}(\T^N) \right)$ and $g \in G^{s}(\T^N)$, there exists a unique $u \in C^{1} \left([0,T]; G^{s}(\T^N) \right) $ which solves \eqref{Schindler's List}.
	\end{itemize}
	\end{Cor}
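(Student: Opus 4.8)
The plan is to derive this corollary as an immediate consequence of Theorem \ref{First Man}, in exactly the way Corollary \ref{Lincoln} was deduced from Theorem \ref{Rear Window}. The hypotheses here --- $\Im(a_{2}) = c_{2} \leq 0$ together with $\Im(a_{1,j}) = c_{1,j} \equiv 0$ for every $j$ --- are precisely those of Theorem \ref{First Man}, which therefore already guarantees that \eqref{Schindler's List} is well-posed in $H^{r}$ with no loss of derivatives. Everything else is a matter of routing this single fact through Proposition \ref{The Graduate} and Remark \ref{The Descendants}.

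First I would settle the Sobolev statement. Given $r \in \R$, $f \in C([0,T]; H^{r}(\T^N))$ and $g \in H^{r}(\T^N)$, Theorem \ref{First Man} supplies a unique $u \in C([0,T]; H^{r}(\T^N))$ solving \eqref{Schindler's List}. Applying the first item of Remark \ref{The Descendants} with $\delta = 0$ then shows $u \in C^{1}([0,T]; H^{r-2}(\T^N))$, so that $u$ belongs to $C([0,T]; H^{r}(\T^N)) \cap C^{1}([0,T]; H^{r-2}(\T^N))$, as required.

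Next, for the smooth case I would invoke part \ref{Back to the Future} of Proposition \ref{The Graduate}: $H^{r}$ well-posedness with loss $0$ forces $C^{\infty}$ well-posedness, so for $f \in C([0,T]; C^{\infty}(\T^N))$ and $g \in C^{\infty}(\T^N)$ there is a unique $u \in C([0,T]; C^{\infty}(\T^N))$, and the second item of Remark \ref{The Descendants} promotes it to $C^{1}([0,T]; C^{\infty}(\T^N))$. For the Gevrey case, part \ref{Back to the Future II} of Proposition \ref{The Graduate} yields $G^{s}$ well-posedness for every $s \geq 1$ out of the $C^{\infty}$ well-posedness just obtained, and one final application of Remark \ref{The Descendants} (third item) gives $u \in C^{1}([0,T]; G^{s}(\T^N))$.

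I expect no genuine obstacle: the only point requiring care is the bookkeeping of the derivative loss, namely feeding the ``loss $0$'' coming from Theorem \ref{First Man} correctly into Proposition \ref{The Graduate} and Remark \ref{The Descendants}, and recalling that uniqueness in each functional setting is already built in (it ultimately rests on Remark \ref{Requiem for a Dream}). In effect the proof collapses to one sentence: the statement follows from Theorem \ref{First Man}, Proposition \ref{The Graduate} and Remark \ref{The Descendants}.
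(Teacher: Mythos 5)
Your proposal is correct and matches the paper's approach exactly: just as Corollary \ref{Lincoln} is stated to be a direct consequence of Theorem \ref{Rear Window}, Proposition \ref{The Graduate} and Remark \ref{The Descendants}, Corollary \ref{Bourne Identity} follows in the same manner from Theorem \ref{First Man}, Proposition \ref{The Graduate} and Remark \ref{The Descendants}. The paper leaves this proof implicit, and your routing of the $H^{r}$ well-posedness (with $\delta = 0$) through Proposition \ref{The Graduate} and then Remark \ref{The Descendants} for the $C^{1}$ regularity is precisely the intended argument.
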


	\section{A class of degenerate operators} \label{The Dark Knight} 
	
	Let us make a brief analysis of some results proved in last sections; Corollary \ref{The Seventh Seal} shows that one only has to deal with problems where $\Im (a_{2}) = c_{2} \leq 0$, whilst  Theorem \ref{Rear Window} covers case $c_{2} < 0$.  Thus the only situations left are those where $c_{2}$ vanishes at some point. Taking a closer look at Theorems \ref{La La Land} and \ref{First Man} allows us to  conjecture about how the \emph{comparison of order of vanishing} between $c_{2}$ and the elements of $\left\{c_{1, 1}, c_{1, 2}, \ldots, c_{1, N}\right\}$ is connected to the well-posedness of \eqref{Schindler's List}. Roughly speaking, it seems that if $c_{2}$ vanishes to a much higher order than some $c_{1, j}$,  the problem is ill-posed. On the other hand, if each $c_{1, j}$ vanishes to a much higher order than $c_{2}$, \eqref{Schindler's List} is well-posed. 
	
	We intend to show in this section that such intuitive ideas are not that far from the truth, even though  the relations of well-posedness and ill-posedness are more intricate than the almost binary results obtained so far. In order to make things precise, we only consider situations where the imaginary part of the leading coefficient of $P$ given in \eqref{Her} has a finite number of zeros. Moreover the following hypotheses are assumed:

	\begin{enumerate} [wide,  labelindent=0pt]
		\item $\Im \ a_{2}(t) = c_{2}(t) \leq 0$, \emph{for every} $t \in [0, T]$.
		\item \emph{There exists a finite set}  $\left\{t_{1}, t_{2}, \ldots, t_{m}\right\} \subset [0, T]$, \emph{with} $t_{1} < t_{2} < \ldots < t_{m}$, \emph{such that}
		\begin{equation*} 
			\Im \ a_{2}(t) = c_{2}(t) = 0 \ \Leftrightarrow t \in \left\{t_{1}, t_{2}, \ldots, t_{m}\right\}.
		\end{equation*}
		\item \emph{There exists} $\delta > 0$ \emph{such that for every} $k \in \left\{1, 2, \ldots, m  \right\}$ \emph{we are able to write} 
		\begin{equation} \label{Psycho}
			\Im \ a_{2}(t) = c_{2}(t) = \alpha^{k}(t) \cdot |t - t_{k}|^{p_{k}}, \ \forall t \in [t_{k} -\delta, t_{k} + \delta] \cap [0, T],
		\end{equation}
		\emph{where} $p_{k} > 0$, $\alpha^{k}$ \emph{is bounded and there exists} $\varepsilon > 0$ \emph{such}  $\alpha^{k} \leq - \varepsilon$ \emph{on the same set}. 
		\item \emph{For each} $j \in \left\{1, 2, \ldots, N \right\}$ \emph{and} $k \in \left\{  1, \ldots, m \right\}$, \emph{we are able to write}
		\begin{equation} \label{The French Connection} 
			\Im \ a_{1, j}(t) = c_{1, j}(t) = \beta^{j, k}(t) \cdot |t - t_{k}|^{q_{j, k}}, \ \forall t \in [t_{k} -\delta, t_{k} + \delta ] \cap [0, T],
		\end{equation}  
		\emph{with} $q_{j, k} \geq 0$ \emph{and} $\beta^{j, k}$  \emph {a bounded function that does not  vanish  on the same set}. 	 
	\end{enumerate}

	\begin{Obs}
		Condition  \eqref{Psycho} implies that each zero of $c_{2}$ has positive and finite order; \eqref{The French Connection} is similar to \eqref{Psycho}, but it also includes the situation where $c_{1, j}$ does not vanish at some $t_{k}$. 
	\end{Obs}

	\begin{Obs}
		It is worth mentioning that not every continuous function satisfies \eqref{Psycho} or \eqref{The French Connection}. In addition to the classic example $f: [0, 1] \to \R_{-}; \ f(t) =  \left\{ \begin{array} {rl}
				- e^{-1/t^{2}}, & \text{if} \ t \neq 0 \\
				0, & \text{if} \ t = 0 
			\end{array} \right. $,
		of  a function that vanishes to infinite order at the origin, we also have $g: [0, 1/2] \to \R_{-}; \ g(t) =  \left\{ \begin{array} {rl}
			1/\log t, & \text{if} \ t \neq 0; \\
				0, & \text{if} \ t = 0, 
			\end{array} \right.$, 
	which is clearly continuous but $\ds \lim_{t \to 0^{+}} \left( \ds \frac{g(t)}{t^{r}} \right) = - \infty$, for each $r > 0$. However, note that this last example could not happen if the  function $g$ were for Hölder continuous for some $\alpha > 0$, for instance.  
	\end{Obs}
	
Using the continuity of $c_{2}$, we obtain (once again for the same $\delta > 0$) the following: 
\begin{enumerate} [wide,  labelindent=0pt, resume]
	
	\item \emph{There exists} $\omega > 0$ \emph{such that}
	\begin{equation} \label{Lost in Translation}
		t \in [0, T], \ \  |t - t_{k}| \geq \ds \frac{\delta}{2}, \ \forall k \in \left\{1, 2, \ldots, m \right\} \ \Rightarrow \ c_{2}(t) \leq - \omega.
	\end{equation} 		
	
\end{enumerate}

	\begin{Teo} \label{Tropa de Elite II} 
		Let $P$ be the operator defined in \eqref{Her} and set for each $k \in \left\{1, 2, \ldots, m \right\}$ 
		\begin{equation} \label{Skyfall}
			q_{k} = \ds \min \left\{q_{1, k}, \ldots, q_{N, k} \right\},
		\end{equation}
		with $q_{j,k}$ as in \eqref{The French Connection}. Then the following statements hold true: 
		\begin{enumerate} [label=\Roman*),  wide,  labelindent=0pt]
			\item \label{Vertigo} If $p_{k} \leq 2q_{k} +1$ for every $k \in \left\{1, 2, \ldots, m \right\}$, then \eqref{Schindler's List} is well-posed in $H^{r}$. 
			\item \label{Cidade de Deus} Suppose that $p_{k} > 2q_{k} +1$ for some $k \in \left\{1, 2, \ldots, m \right\}$.  Then \eqref{Schindler's List} is ill-posed in  $C^{\infty}$. Furthermore if we define
			\begin{equation}\label{Dunkirk}
				\mathscr{K} = \left\{1 \leq k \leq m; \  p_{k} >  2q_{k} +1 \right\},
			\end{equation}
			then \eqref{Schindler's List} is well-posed in $G^{\tau}$ if and only if
			\begin{equation} \label{Dune} 
				\tau <  \min_{k \in \mathscr{K}} \left( \frac{p_{k} - q_{k}}{p_{k} - 2q_{k} - 1} \right).
			\end{equation} 
		\end{enumerate}
	\end{Teo}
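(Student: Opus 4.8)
Everything revolves around the exponent in the Fourier formula \eqref{Sunset Boulevard}. Writing $A_{k}=B_{k}+iC_{k}$ as in \eqref{Goodfellas}, the modulus of each summand in \eqref{Sunset Boulevard} is governed by the \emph{real} quantity
\[
\Theta(t,s,\xi):=\big(C_{2}(t)-C_{2}(s)\big)|\xi|^{2}+\sum_{j=1}^{N}\big(C_{1,j}(t)-C_{1,j}(s)\big)\xi_{j},\qquad 0\le s\le t\le T,
\]
together with the bounded term $C_{0}(t)-C_{0}(s)$; this is exactly the estimate \eqref{The Untouchables}. Thus the problem reduces to estimating $\sup_{0\le s\le t}\Theta(t,s,\xi)$ as $|\xi|\to\infty$: a uniform bound gives, through Lemma \ref{American Gangster} and the density argument at the end of the proof of Theorem \ref{Rear Window}, well-posedness in $H^{r}$; a bound of order $|\xi|^{\theta}$ with $0<\theta<1$ gives, through the Gevrey characterization of the data and Lemma \ref{American Gangster}.II), well-posedness in $G^{\tau}$ for $\tau<1/\theta$; and a matching \emph{lower} bound of the same order, inserted in the formula \eqref{Forrest Gump} for $\widetilde P$ (legitimate by Theorem \ref{Cries and Whispers}), forces ill-posedness for $\tau\ge 1/\theta$.

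\textbf{The one-dimensional model.} The core is the following elementary extremization, which I would isolate as a lemma: let $p>q\ge 0$, $\mu,\nu>0$, $0<d\le 1$, and for $y\ge 1$ set $\Lambda(y):=\sup\{\int_{s}^{t}(-\mu r^{p}y^{2}+\nu r^{q}y)\,dr:\ 0\le s\le t\le d\}$. Since the integrand equals $r^{q}y(\nu-\mu r^{p-q}y)$, it is positive exactly for $r<r_{*}(y):=(\nu/\mu y)^{1/(p-q)}$, so the primitive increases on $[0,r_{*}(y)]$ and decreases afterwards; hence for $y$ large (so $r_{*}(y)<d$) a direct computation gives $\Lambda(y)=c_{p,q,\mu,\nu}\,y^{\theta_{p,q}}$ with $\theta_{p,q}:=(p-2q-1)/(p-q)$ and $c_{p,q,\mu,\nu}>0$ (positivity of the constant uses $p>q$). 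Moreover $\int_{s}^{t}(-\mu r^{p}y^{2}+\nu r^{q}y)\,dr\ge\tfrac12\Lambda(y)$ holds for $s$ ranging over an interval near $r=0$ of length $\asymp y^{-1/(p-q)}$, with a suitable $t\asymp y^{-1/(p-q)}$; this is what the ill-posedness direction needs. Note $\theta_{p,q}<1$ always, and $\theta_{p,q}>0$ iff $p>2q+1$; when $p\le 2q+1$ (still $p>q$) one gets $\Lambda(y)\le\text{const}$, and when $q\ge p$ the integrand is $\le 0$ for $y$ large.

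\textbf{From the model to the theorem.} Fix $\xi\in\Z^{N}$ and split $[0,T]$ into the sets $\{|t-t_{k}|\le\delta/2\}$ and their complement. On the complement $c_{2}\le-\omega$ by \eqref{Lost in Translation}, so $\partial_{t}$ of the exponent is $c_{2}(t)|\xi|^{2}+\sum_{j}c_{1,j}(t)\xi_{j}\le-\omega|\xi|^{2}+C|\xi|<0$ for $|\xi|$ large; such pieces contribute non-positively. Near $t_{k}$, \eqref{Psycho}--\eqref{The French Connection} and $\delta\le 1$ give $\partial_{t}(\text{exponent})(r)\le-\varepsilon|r-t_{k}|^{p_{k}}|\xi|^{2}+NM|r-t_{k}|^{q_{k}}|\xi|$, so by the model lemma (applied on each side of $t_k$) the rise of the exponent across that neighborhood is at most $c_{k}(1+|\xi|^{\theta_{k}})$ with $\theta_{k}:=\theta_{p_{k},q_{k}}$, and there is no rise when $q_{k}\ge p_{k}$ and $|\xi|$ large. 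Summing the at most $m$ contributions and the bounded $C_{0}$-term yields
\[
\sup_{0\le s\le t\le T}\Theta(t,s,\xi)\le C\big(1+|\xi|^{\theta}\big),\qquad \theta:=\max_{k\in\mathscr{K}}\theta_{k}\in(0,1),
\]
uniformly in $\xi$, and $\Theta$ is \emph{bounded} when $\mathscr{K}=\emptyset$, i.e. when $p_{k}\le 2q_{k}+1$ for every $k$. Plugging this into \eqref{The Untouchables} together with $\int_{0}^{T}|\widehat f(s,\xi)|\,ds\le T^{1/2}(\int_{0}^{T}|\widehat f(s,\xi)|^{2}\,ds)^{1/2}$, part \ref{Vertigo} follows exactly as in Theorem \ref{Rear Window} (the $|\widehat u(t,\xi)|\le C[|\widehat g(\xi)|+(\int|\widehat f|^{2})^{1/2}]$ bound, Lemma \ref{American Gangster}.I), then energy estimate $+$ density); and the well-posedness half of \ref{Cidade de Deus} follows as in Proposition \ref{The Graduate}: if $|\widehat g(\xi)|,|\widehat f(s,\xi)|\le Ce^{-\delta|\xi|^{1/\tau}}$ with $1/\tau>\theta$, then $|\widehat u(t,\xi)|\le C'e^{C|\xi|^{\theta}-\delta|\xi|^{1/\tau}}\le C''e^{-(\delta/2)|\xi|^{1/\tau}}$ for all $t$, so $u\in C([0,T];G^{\tau}(\T^{N}))$ by Lemma \ref{American Gangster}.II), uniqueness being Remark \ref{Requiem for a Dream}.

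\textbf{Ill-posedness and the main obstacle.} For the converse in \ref{Cidade de Deus} I argue by contradiction against the uniqueness of the candidate solution. By Theorem \ref{Cries and Whispers} it suffices to treat $\widetilde{P}$, whose only candidate is \eqref{Forrest Gump}. Choose $k_{0}\in\mathscr{K}$ with $\theta_{k_{0}}=\theta$ and an index $j_{0}$ with $q_{j_{0},k_{0}}=q_{k_{0}}$; shrinking $\delta$, assume $\beta^{j_{0},k_{0}}$ has constant sign near $t_{k_{0}}$ (WLOG positive, bounded between $\varsigma>0$ and $M$) and that $t_{k_{0}}$ can be approached from one side inside $[0,T]$. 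Take $g\equiv 0$ and $f(t,x)=-i\sum_{\eta\in\Z_{+}}e^{-\delta_{0}\eta^{1/\tau}}e^{ix_{j_{0}}\eta}\in C([0,T];C^{\omega}(\T^{N}))$ for a small constant $\delta_{0}$. Then \eqref{Forrest Gump} gives $\widehat u(t,\eta e_{j_{0}})=e^{-\delta_{0}\eta^{1/\tau}}\int_{0}^{t}\exp\Theta(t,s,\eta e_{j_{0}})\,ds$, a positive integral with no cancellation because the exponents in \eqref{Forrest Gump} are real. Using the lower bound of the model lemma at $t_{k_{0}}$ (with $\mu=M$, $\nu=\varsigma$), for suitable $t$ there is an interval of $s$'s of length $\asymp\eta^{-1/(p_{k_{0}}-q_{k_{0})}}$ on which $\Theta(t,s,\eta e_{j_{0}})\ge c\,\eta^{\theta}$, whence
\[
|\widehat u(t,\eta e_{j_{0}})|\ \ge\ c'\,\eta^{-1/(p_{k_{0}}-q_{k_{0}})}\,e^{\,c\,\eta^{\theta}-\delta_{0}\eta^{1/\tau}},\qquad \eta\text{ large}.
\]
Since $\tau\ge 1/\theta$ means $1/\tau\le\theta$, the right-hand side grows faster than any power of $\eta$ (taking $\delta_{0}<c$ in the borderline case $\tau=1/\theta$, any $\delta_{0}>0$ otherwise); by \eqref{Amadeus} this contradicts $u(t,\cdot)\in G^{\tau}(\T^{N})$, so \eqref{Wild Strawberries}, hence \eqref{Schindler's List}, is not well-posed in $G^{\tau}$. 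Finally, when $\mathscr{K}\neq\emptyset$ we have $\theta<1<1/\theta$, so we have just produced a non--well-posed $G^{\tau}$ (e.g. $\tau=1/\theta$); were \eqref{Schindler's List} well-posed in $C^{\infty}$ it would be well-posed in that $G^{\tau}$ by Proposition \ref{The Graduate}.\ref{Back to the Future II}, a contradiction, which proves the $C^{\infty}$ statement. I expect the genuinely delicate point to be the one-dimensional bookkeeping: showing that $\sup_{0\le s\le t}\Theta(t,s,\xi)$ is of order exactly $|\xi|^{\theta}$ and no larger, uniformly over \emph{all} $\xi\in\Z^{N}$ (not just along coordinate axes), and verifying that the rises coming from the finitely many zeros $t_{k}$ cannot stack up to a worse exponent, which is why the decreasing behavior of the exponent on the complement of the neighborhoods is essential.
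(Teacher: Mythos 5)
Your proposal is correct and follows the same overall strategy as the paper's proof: reduction to the normal form $\widetilde P$ via Theorem \ref{Cries and Whispers}, explicit analysis of the real exponent in \eqref{Forrest Gump}, an upper bound of order $|\xi|^{\theta}$ on that exponent for well-posedness, and a matching lower bound along a sequence $(t_n,\xi_n)$ for ill-posedness. Where you differ is in organization, and the difference is genuine and to your advantage. The paper works case by case (first case $t_1=0$, second case $t_{1}\neq 0$, then four sub-cases around each $t_k$ depending on the relative position of $s,t,t_k$ and on whether $q_k\geq p_k$), each time introducing and exploiting the auxiliary function $f_{p,q,\varepsilon,\kappa}(x)=-\tfrac{\varepsilon x^{p+1}}{p+1}+\tfrac{\kappa x^{q+1}}{q+1}$ after the rescaling $x=|t-t_k|\,|\xi|^{1/(p_k-q_k)}$. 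You instead isolate the one-dimensional extremization as a standalone lemma with closed-form answer $\Lambda(y)=c\,y^{\theta_{p,q}}$, then control the global quantity $\sup_{0\le s\le t\le T}\Theta(t,s,\xi)$ by a drawup argument: outside the $\delta/2$-neighborhoods of the $t_k$'s the exponent $E(r,\xi)=C_2(r)|\xi|^2+\sum_j C_{1,j}(r)\xi_j$ is strictly decreasing for $|\xi|$ large (by \eqref{Lost in Translation}), so its maximal rise over $[0,T]$ is bounded by the sum of the maximal rises over the finitely many neighborhoods, each of which the lemma controls (after splitting the neighborhood at $t_k$). This cleanly replaces the paper's four-way case split and makes the role of the exponent $\theta_{p_k,q_k}=(p_k-2q_k-1)/(p_k-q_k)$ --- which is precisely the paper's $\nu_k$ --- completely transparent, and it also makes evident why the finitely many zeros cannot stack up into a worse exponent. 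The ill-posedness half is essentially identical to the paper's: the same choice of data concentrated along the $j_0$-axis, the same observation that the integrand in \eqref{Forrest Gump} is positive with no cancellation, and the same lower bound obtained by restricting the integral to an interval of $s$'s of length comparable to $\eta^{-1/(p_{k_0}-q_{k_0})}$ on which $\Theta\gtrsim\eta^{\theta}$. Two small points to tidy up. First, the source term you write down, $f(t,x)=-i\sum_{\eta}e^{-\delta_0\eta^{1/\tau}}e^{ix_{j_0}\eta}$, lies in $C([0,T];G^{\tau}(\T^N))$ but not in $C([0,T];C^{\omega}(\T^N))$ when $\tau>1$ --- which is the case at the threshold $\tau=\varrho=1/\theta>1$ --- so replace $C^{\omega}$ by $G^{\tau}$ there; this does not affect the argument, since you only claim ill-posedness in $G^{\tau}$ and then propagate it via Proposition \ref{The Graduate}. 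Second, the final contradiction (divergence of $\widehat u(t_n,\xi_n)$ contradicts $u\in C([0,T];G^{\tau})$) implicitly uses Theorem \ref{On the Waterfront} together with Lemma \ref{The People vs. Larry Flynt} to upgrade pointwise-in-$t$ Gevrey regularity to a $t$-uniform exponential decay of the Fourier coefficients; the paper leaves this equally implicit, but it is worth a line since the sequence $t_n$ is not constant.
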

	
	\begin{proof}[\pmb{Proof of \ref{Vertigo}}]
		By proceeding just like we did in Subsection \ref{Gladiator}, it follows in particular from \eqref{The Untouchables} that
		\small{
			\begin{align*} 
				|\widehat{u}(t, \xi)|
				&\leq |\widehat{g}(\xi)| \exp  \left(C_{2}(t) |\xi|^{2} + \ds \sum_{j = 1}^{N} C_{1, j}(t) \xi_{j} +  C_{0}(t) \right)  \nonumber + \\
				&+ \ds \int_{0}^{t} |\widehat{f}(s, \xi)|   \exp  \left[\left(C_{2}(t) - C_{2}(s) \right) |\xi|^{2} + \ds \sum_{j = 1}^{N} \left(C_{1, j}(t) - C_{1, j}(s) \right) \xi_{j} +\left(C_{0}(t) - C_{0}(s) \right) \right]   ds,   
			\end{align*}
		}
		\normalsize
		for any $t \in [0,T]$ and $\xi \in \Z^N$. Take $M > 0$ such that 
		\begin{equation} \label{Spider Man 2}
			|c_{2}|, |c_{1, 1}|, \ldots, |c_{1, N}|, |c_{0}| \ \text{are all uniformly bounded by $\ds \frac{M}{N}$}.
		\end{equation}
		Then
		\small{
			\begin{equation} \label{Network} 
				\begin{split}
					|\widehat{u}(t, \xi)|
					&\leq e^{MT}|\widehat{g}(\xi)| \exp  \left(C_{2}(t) |\xi|^{2} + \ds \sum_{j = 1}^{N} C_{1, j}(t) \xi_{j}  \right)  + \\
					&+ \ds e^{MT} \int_{0}^{t} |\widehat{f}(s, \xi)|   \exp  \left[\left(C_{2}(t) - C_{2}(s) \right) |\xi|^{2} + \ds \sum_{j = 1}^{N} \left(C_{1, j}(t) - C_{1, j}(s) \right) \xi_{j}   \right]   ds.   
				\end{split}
			\end{equation}
		}
		\normalsize
		Our goal is to use \eqref{Network} to obtain an estimate similar to \eqref{Zero Dark Thirty}, with $\rho = 0$. Then it would be sufficient to repeat the arguments made in the end of the proof of Theorem \ref{Rear Window}.
	
		Take $t \in [0, T]$ and suppose in a first moment that
		\begin{equation} \label{Ghostbusters}
		\pmb{	|t - t_{k}| \geq \ds \frac{3 \delta}{4}, \ \ \ \forall k \in \left\{1, 2, \ldots, m \right\}}.
		\end{equation}
		If $t \leq \ds \frac{\delta}{4}$, by putting \eqref{Lost in Translation} and \eqref{Spider Man 2} into \eqref{Network} and applying triangular inequality,  we obtain   
		\begin{equation*}
			|\widehat{u}(t, \xi)| \leq e^{MT}  \left[|\widehat{g}(\xi)| \exp \left[(- \omega  |\xi|^{2} +  M|\xi|) t  \right] + \ds \int_{0}^{t} |\widehat{f}(s, \xi)|  \exp \left[ (-\omega |\xi|^{2} + M |\xi|) (t-s)  \right] ds  \right].
		\end{equation*}
		Since there exists $L > 0$ such that $-\omega |\xi|^{2} + M |\xi| \leq L$ for any $\xi \in \Z^{N}$, using Hölder's inequality we deduce that
		\small{
		\begin{align*}
			|\widehat{u}(t, \xi)| &\leq e^{MT}  \left[|\widehat{g}(\xi)|  e^{\frac{L \delta}{4}} +  \ds \int_{0}^{t} |\widehat{f}(s, \xi)| e^{\frac{L \delta}{4}} ds \right] \leq C \left[|\widehat{g}(\xi)| + \left(\ds \int_{0}^{T} |\widehat{f}(s, \xi)|^{2}  ds \right)^{1/2} \right],  \ \forall \xi \in \Z^{N},   
		\end{align*}
		}
		\normalsize
		for some constant $C > 0$ which only depends on $M, T$ and $\omega$.  
		
		When  $t > \ds \frac{\delta}{4}$, it follows from \eqref{Ghostbusters} that $\left|t - \ds \frac{\delta}{4} - t_{k}\right| \geq \left|t  - t_{k}\right| - \ds \frac{\delta}{4} \geq \ds \frac{\delta}{2}$, $\forall k \in \left\{1, 2, \ldots, m \right\}.$ Applying \eqref{Lost in Translation} and \eqref{Network}, we infer that
		\small{
		\begin{align*}
			|\widehat{u}(t, \xi)| &\leq e^{MT}   |\widehat{g}(\xi)| \exp \left[\left(C_{2}(t) - C_{2} \left(t - \delta/4\right) \right) |\xi|^{2} + MT |\xi|  \right]   \\
			&+ e^{MT}  \ds \int_{0}^{t - \delta/4} |\widehat{f}(s, \xi)|   \exp \left\{\left(C_{2}(t) - C_{2}(s) \right) |\xi|^{2} + MT |\xi|  \right\}  ds \\
			&+ e^{MT}  \ds \int_{t - \delta/4}^{t} |\widehat{f}(s, \xi)|   \exp \left\{\left(C_{2}(t) - C_{2}(s) \right) |\xi|^{2} + M(t-s) |\xi|  \right\}  ds \\
			&\leq  e^{MT}   |\widehat{g}(\xi)| \exp \left[\left(C_{2}(t) - C_{2} \left(t - \delta/4\right) \right) |\xi|^{2} + MT |\xi|  \right] \\
			&+ e^{MT}  \ds \int_{0}^{t - \delta/4} |\widehat{f}(s, \xi)|   \exp \left\{\left(C_{2}(t) - C_{2}(t - \delta/4) \right) |\xi|^{2} +  MT |\xi|  \right\}  ds  \\
			&+ e^{MT}  \ds \int_{t - \delta/4}^{t} |\widehat{f}(s, \xi)|   \exp \left\{- \omega \left(t-s \right) |\xi|^{2} + M (t-s) |\xi|  \right\}  ds \\
			&\leq e^{MT}   |\widehat{g}(\xi)| \exp \left\{ - \frac{\omega \delta}{4} |\xi|^{2} + MT |\xi|  \right\}  + e^{MT} \exp \left\{ - \frac{\omega \delta}{4} |\xi|^{2} + MT |\xi|  \right\} \ds \int_{0}^{t - \delta/4} |\widehat{f}(s, \xi)|   ds  \\
			&+ e^{MT} \ds \int_{t - \delta/4}^{t} |\widehat{f}(s, \xi)|   \exp \left\{ (- \omega  |\xi|^{2} + M|\xi|) \left(t-s \right) \right\}  ds.
		\end{align*}
		}
		\normalsize
	 	Let $L > 0$ such that 
		\begin{equation*}
			-\omega \xi^{2} + M|\xi| \leq L, \ \ \  - \ds \frac{\omega \delta}{4} |\xi|^{2} + MT |\xi| \leq L,  \ \ \  \forall \xi \in \Z^N. 
		\end{equation*}
		Then
		\small{
		\begin{align*}
			|\widehat{u}(t, \xi)| &\leq e^{MT + L} |\widehat{g}(\xi)| +  e^{MT + L} \ds \int_{0}^{t - \delta/4} |\widehat{f}(s, \xi)| ds +  e^{(L+M)T} \ds \int_{t - \delta/4}^{t} |\widehat{f}(s, \xi)|  ds \nonumber \\
			&\leq C \left[|\widehat{g}(\xi)| + \left(\ds \int_{0}^{T} |\widehat{f}(s, \xi)|^{2}  ds \right)^{1/2} \right],  \ \forall \xi \in \Z^{N}, 
		\end{align*}	
		}
		\normalsize
		for some constant $C > 0$ depending on $\omega, \delta, T$ and $M$, applying Hölder's inequality.

		Next we proceed to the situation where
		\begin{equation} \label{Casino Royale}
			\pmb{|t - t_{k}| <\ds \frac{3 \delta}{4}, \ \text{for some} \ k \in \left\{1, 2, \ldots, m \right\}}.
		\end{equation}
		Once again we split the proof into  two cases. 
		\begin{equation} \label{Birdman} 
			\text{\pmb{\emph{First case:} $t_{1} = 0$ and $t \in \left[0, \ds \frac{3 \delta}{4} \right]$}.}
		\end{equation}
		It follows from \eqref{Psycho}, \eqref{The French Connection} and \eqref{Network} that
		\small{
		\begin{align*}
			|\widehat{u}(t, \xi)|&\leq e^{MT}|\widehat{g}(\xi)| \exp  \left[\left(\ds \int_{0}^{t} \alpha^{1}(r) |r|^{p_{1}} dr \right) |\xi|^{2} + \ds \sum_{j = 1}^{N} \left(\ds \int_{0}^{t} \beta^{j, 1}(r) |r|^{q_{j, 1}} dr \right) \xi_{j}  \right]  + \\
			&+ \ds e^{MT} \int_{0}^{t} |\widehat{f}(s, \xi)|   \exp  \left[\left(\ds \int_{s}^{t} \alpha^{1}(r) |r|^{p_{1}} dr \right) |\xi|^{2} + \ds \sum_{j = 1}^{N} \left(\ds \int_{s}^{t} \beta^{j, 1}(r) |r|^{q_{j, 1}} dr \right) \xi_{j}   \right]   ds \\ 
			&\leq e^{MT}   |\widehat{g}(\xi)| \exp \left\{ -\varepsilon \left(\ds \int_{0}^{t}  r^{p_{1}} dr \right) |\xi|^{2} + \ds \sum_{j = 1}^{N} \left(\ds \int_{0}^{t} |\beta^{j, 1}(r)| r^{q_{j, 1}} dr \right) |\xi_{j}|  \right\}  +  \nonumber \\
			&+ e^{MT} \ds \int_{0}^{t} |\widehat{f}(s, \xi)|  \exp \left\{ - \varepsilon \left(\ds \int_{s}^{t}  r^{p_{1}} dr \right) |\xi|^{2} + \ds \sum_{j = 1}^{N} \left(\ds \int_{s}^{t} \left|\beta^{j, 1}(r)\right| r^{q_{j, 1}} dr \right) |\xi_{j}|  \right\}  ds.
		\end{align*}
		}
		\normalsize
		Take $\kappa > 0$ such that 
		\begin{equation} \label{Fight Club} 
			\left|\beta^{j, k}(r)\right| \leq \ds \frac{\kappa}{N}, \ \ \ \forall r \in [t_{k} -\delta, t_{k} + \delta ] \cap [0, T], \ \ \forall j \in \left\{1, 2, \ldots, N \right\}, \ \ \forall k \in \left\{1, 2, \ldots, m \right\}.
		\end{equation}
		Then
		\small{
		\begin{align*} 
			|\widehat{u}(t, \xi)| &\leq e^{MT}   |\widehat{g}(\xi)| \exp \left\{ -\varepsilon \left(\ds \int_{0}^{t}  r^{p_{1}} dr \right) |\xi|^{2} + \frac{\kappa}{N} \ds \sum_{j = 1}^{N} \left(\ds \int_{0}^{t}  r^{q_{j, 1}} dr \right) |\xi_{j}|  \right\}  +  \nonumber \\
			&+ e^{MT} \ds \int_{0}^{t} |\widehat{f}(s, \xi)|  \exp \left\{ - \varepsilon \left(\ds \int_{s}^{t}  r^{p_{1}} dr \right) |\xi|^{2} + \frac{\kappa}{N} \ds \sum_{j = 1}^{N} \left(\ds \int_{s}^{t}  r^{q_{j, 1}} dr \right) |\xi_{j}|  \right\}  ds.
		\end{align*}
		}
		\normalsize
		With no loss of generality one can assume that $\delta \leq 1$, which implies that $r^{q_{j, 1}} \leq r^{q_{1}}$  in the expression above, using the notation set in \eqref{Skyfall}.  Hence
		\small{ 
		\begin{equation} \label{1917}
		\begin{split}
			|\widehat{u}(t, \xi)| &\leq e^{MT}   |\widehat{g}(\xi)| \exp \left\{ -\varepsilon \left(\ds \int_{0}^{t}  r^{p_{1}} dr \right) |\xi|^{2} + \frac{\kappa}{N} \ds \sum_{j = 1}^{N} \left(\ds \int_{0}^{t}  r^{q_{1}} dr \right) |\xi_{j}|  \right\}  +  \\
			&+ e^{MT} \ds \int_{0}^{t} |\widehat{f}(s, \xi)|  \exp \left\{ - \varepsilon \left(\ds \int_{s}^{t}  r^{p_{1}} dr \right) |\xi|^{2} + \frac{\kappa}{N} \ds \sum_{j = 1}^{N} \left(\ds \int_{s}^{t}  r^{q_{1}} dr \right) |\xi_{j}|  \right\}  ds \\
			&= e^{MT}   |\widehat{g}(\xi)| \exp \left[ - \ds \frac{\varepsilon t^{p_{1} + 1}}{p_{1}+1}  |\xi|^{2} + \ds \frac{\kappa t^{q_{1} + 1}}{q_{1} + 1}   |\xi|  \right]  +    \\
			&+ e^{MT} \ds \int_{0}^{t} |\widehat{f}(s, \xi)| \exp \left\{ \ds \frac{\varepsilon (s^{p_{1} + 1} - t^{p_{1} + 1})}{p_{1}+1}  |\xi|^{2} + \ds \frac{\kappa (t^{q_{1} + 1} - s^{q_{1} + 1})}{q_{1} + 1}   |\xi|  \right\}  ds. 
			\end{split}
		\end{equation}
		}
		\normalsize
		
		By hypothesis, $p_{1} \leq 2q_{1} +1$; we split the first case in two parts, depending on the sign of $q_{1}-p_{1}$. 
		\begin{enumerate}[label=\Roman*),  wide,  labelindent=0pt]
			\item \pmb{$q_{1} < p_{1}$}. In this case $q_{1} < p_{1} \leq 2q_{1} +1$ and 
		\end{enumerate}
		\begin{equation} \label{Gran Torino}
			\nu_{1} := \ds \frac{2q_{1} - p_{1} + 1}{q_{1} - p_{1}} \leq 0. 
		\end{equation}
		So for any $\xi \in \Z^{N} \setminus \left\{0\right\}$ and $0 \leq s \leq t \leq \ds \frac{3 \delta}{4}$, we have
		\small{
			\begin{equation} \label{Phantom Thread} 
				\underbrace{\ds \frac{1}{|\xi|^{\nu_{1}}} \left[\ds \frac{\varepsilon (s^{p_{1} + 1} - t^{p_{1} + 1})}{(p_{1}+1)}  |\xi|^{2} + \ds \frac{\kappa (t^{q_{1} + 1} - s^{q_{1} + 1})}{q_{1} + 1}   |\xi| \right]}_{=:(\star)}   = \left(\ds \frac{\varepsilon (s^{p_{1} + 1} - t^{p_{1} + 1})}{(p_{1}+1)} \right) |\xi|^{2 - \nu_{1}} + \left(\ds \frac{\kappa (t^{q_{1} + 1} - s^{q_{1} + 1})}{q_{1} + 1} \right) |\xi|^{1- \nu_{1}}.  
			\end{equation}
		}
		\normalsize
		On the other hand, 
		\begin{equation} \label{The Martian}
			\begin{split}
				2 - \nu_{1} = \ds \frac{2q_{1} - 2p_{1} - 2q_{1} + p_{1} - 1}{q_{1} - p_{1}} = \ds \frac{p_{1} + 1}{p_{1} - q_{1}}; \ \ \ 1 - \nu_{1} = \ds \frac{q_{1} - p_{1} - 2q_{1} + p_{1} - 1}{q_{1} - p_{1}} = \ds \frac{q_{1}+1}{p_{1}-q_{1}}.
			\end{split}
		\end{equation}
		By associating \eqref{Phantom Thread} to \eqref{The Martian}, we infer that
		\small{
		\begin{equation} \label{Whiplash}
			\begin{split}
				(\star) &= \left[ \ds \frac{\varepsilon}{p_{1} + 1} \left(s|\xi|^{\frac{1}{p_{1} - q_{1}}}  \right)^{p_{1} + 1} - \ds \frac{\kappa}{q_{1} + 1} \left(s|\xi|^{\frac{1}{p_{1} - q_{1}}}  \right)^{q_{1} + 1} \right] - \left[ \ds \frac{\varepsilon}{p_{1} + 1} \left(t|\xi|^{\frac{1}{p_{1} - q_{1}}}  \right)^{p_{1} + 1} - \ds \frac{\kappa}{q_{1} + 1} \left(t|\xi|^{\frac{1}{p_{1} - q_{1}}}  \right)^{q_{1} + 1} \right].
			\end{split} 
		\end{equation}  
		}
		\normalsize
		
		Consider $f_{p_{1}, q_{1}, \varepsilon, \kappa}: \R_{+} \to \R$ given by 
		\begin{equation} \label{Mank}
			f_{p_{1}, q_{1}, \varepsilon, \kappa}(x) = - \ds \frac{\varepsilon x^{p_{1}+ 1}}{p_{1} + 1}  + \ds \frac{\kappa x^{q_{1}+ 1}}{q_{1} + 1}.
		\end{equation}
		Note that $f_{p_{1}, q_{1}, \varepsilon, \kappa}'(x) = - \varepsilon x^{p_{1}}  + \kappa x^{q_{1}}.$ Since  we are dealing with the case where $q_{1} < p_{1}$, $f_{p_{1}, q_{1}, \varepsilon, \kappa}$ is decreasing if $x > \left(\ds \frac{\kappa}{\varepsilon}\right)^{\frac{1}{p_{1} - q_{1}}}$. Thus there exists  $C_{p_{1}, q_{1}, \kappa, \varepsilon} > 0$ such that
		\begin{equation} \label{Cast Away}  
			0 \leq y \leq z \ \Rightarrow  \ f_{p_{1}, q_{1}, \kappa, \varepsilon}(z) - f_{p_{1}, q_{1}, \kappa, \varepsilon}(y) \leq C_{p_{1}, q_{1}, \kappa, \varepsilon}. 
		\end{equation}
		
		Observe that \eqref{Whiplash} can be rewritten as $(\star) = f_{p_{1}, q_{1}, \kappa, \varepsilon}(t|\xi|^{\frac{1}{p_{1} - q_{1}}}) - f_{p_{1}, q_{1}, \kappa, \varepsilon}(s|\xi|^{\frac{1}{p_{1} - q_{1}}}). $ By combining  \eqref{Phantom Thread} with \eqref{Whiplash} and \eqref{Cast Away}, one deduces that 
		\small{
		\begin{equation} \label{The Ilusionist}
			\ds \frac{\varepsilon (s^{p_{1} + 1} - t^{p_{1} + 1})}{(p_{1}+1)}  |\xi|^{2} + \ds \frac{\kappa (t^{q_{1} + 1} - s^{q_{1} + 1})}{q_{1} + 1}   |\xi| \leq C_{p_{1}, q_{1}, \kappa, \varepsilon} |\xi|^{\nu_{1}}, \ \ \forall \xi \in \Z^N \setminus \left\{0\right\}, \ \ 0 \leq s \leq t \leq \ds \frac{3 \delta}{4}. 
		\end{equation}
		}
		\normalsize
		But $\nu_{1}\leq 0$, which implies  
		\small{
		\begin{equation} \label{The Last Samurai}
			\ds \frac{\varepsilon (s^{p_{1} + 1} - t^{p_{1} + 1})}{(p_{1}+1)}  |\xi|^{2} + \ds \frac{\kappa (t^{q_{1} + 1} - s^{q_{1} + 1})}{q_{1} + 1}   |\xi|  \leq C_{p_{1}, q_{1}, \kappa, \varepsilon},  \ \ \forall \xi \in \Z^N, \ \ 0 \leq s \leq t \leq \ds \frac{3 \delta}{4}. 
		\end{equation}
		}
		\normalsize
		It follows  from \eqref{1917}, \eqref{The Last Samurai} and Hölder's inequality that 
		\small{
		\begin{align*}
			|\widehat{u}(t, \xi)| &\leq e^{MT} e^{C_{p_{1}, q_{1}, \kappa, \varepsilon}} \left(|\widehat{g}(\xi)| + \ds \int_{0}^{t} |\widehat{f}(s, \xi)|  ds \right) \leq C \left[|\widehat{g}(\xi)| + \left(\ds \int_{0}^{T} |\widehat{f}(s, \xi)|^{2}  ds \right)^{1/2} \right], \ \  \ \forall \xi \in \Z^{N}, 
		\end{align*}	
		}
		\normalsize
		for some constant $C > 0$ which depends on $M, T, p_{1}, q_{1}, \kappa$ and $\varepsilon$. 
		
		Next we deal with the case 
		\begin{enumerate}[resume, label=\Roman*),  wide,  labelindent=0pt]
			\item \pmb{$q_{1} \geq p_{1}$}. Then $0 \leq r \leq 1 \Rightarrow r^{q_{1}} \leq r^{p_{1}}$ and  there exists $B_{p_{1}, q_{1}} > 0$ for which $r^{q_{1}} \leq B_{p_{1}, q_{1}} r^{p_{1}}$, for every $0 \leq r \leq T$.  Thus
		\end{enumerate} 
		\small{
		\begin{equation} \label{Casino} 
			0 \leq s \leq t \leq T \ \Rightarrow \ds \int_{s}^{t} r^{q_{1}} dr \leq  B_{p_{1}, q_{1}} \ds \int_{s}^{t} r^{p_{1}} dr \ \ \Rightarrow  \ \ds \frac{(t^{q_{1} + 1} - s^{q_{1} + 1})}{q_{1} + 1} \leq B_{p_{1}, q_{1}} \ds \frac{(t^{p_{1} + 1} - s^{p_{1} + 1})}{p_{1} + 1}. 
		\end{equation}
		}
		\normalsize
		By associating \eqref{1917} to \eqref{Casino}, one infers that
		\small{
		\begin{align*}
			|\widehat{u}(t, \xi)| &\leq e^{MT}   |\widehat{g}(\xi)| \exp \left\{ - \ds \frac{\varepsilon t^{p_{1} + 1}}{p_{1}+1}  |\xi|^{2} + \ds \frac{\kappa B_{p_{1}, q_{1}}  t^{p_{1} + 1}}{p_{1} + 1}   |\xi|  \right\}  +   \\
			&+ e^{MT} \ds \int_{0}^{t} |\widehat{f}(s, \xi)|   \exp \left\{ \ds \frac{\varepsilon (s^{p_{1} + 1} - t^{p_{1} + 1})}{p_{1}+1}  |\xi|^{2} +  \ds \frac{\kappa B_{p_{1}, q_{1}}(t^{p_{1} + 1} - s^{p_{1} + 1})}{p_{1} + 1} |\xi| \right\}  ds  \\
			&\leq  e^{MT}  |\widehat{g}(\xi)| \exp \left\{t^{p_{1} + 1} \left(- \ds \frac{\varepsilon }{p_{1}+1}  |\xi|^{2} + \ds \frac{\kappa B_{p_{1}, q_{1}} }{p_{1} + 1}   |\xi| \right)  \right\}  +  \\
			&+ e^{MT} \ds \int_{0}^{t} |\widehat{f}(s, \xi)|  \exp \left\{(t^{p_{1} + 1} - s^{p_{1} + 1}) \left( - \ds \frac{\varepsilon}{p_{1}+1}  |\xi|^{2} +  \ds \frac{\kappa B_{p_{1}, q_{1}}}{p_{1} + 1} |\xi| \right) \right\}  ds.
		\end{align*}
		}
		\normalsize
		Since the expressions inside the exponentials are uniformly bounded, we conclude once again that
		\small{
		\begin{equation*} 
			|\widehat{u}(t, \xi)| \leq C \left[|\widehat{g}(\xi)| + \left(\ds \int_{0}^{T} |\widehat{f}(s, \xi)|^{2}  ds \right)^{1/2} \right],  \ \forall \xi \in \Z^{N},
		\end{equation*}
		}
		\normalsize
		for some constant $C$ depending on $M, T, p_{1}, q_{1}, \kappa$ and $\varepsilon$,  which finalizes the proof for \eqref{Birdman}.
		
		Now we proceed to the
		\begin{equation} \label{The Founder} 
			\text{\pmb{\emph{Second case:} $t_{1} \neq 0$ or $t \notin \left[0, \ds \frac{3 \delta}{4} \right]$}.}
		\end{equation}	
		Fix $t \in [0, T]$ e $k \in \left\{1, \ldots, m \right\}$ for which \eqref{Casino Royale} holds. By possibly reducing $\delta$, we may assume $\ds \frac{3 \delta}{2} \leq t_{k}.$
		Since $c_{2}\leq 0$, it follows from  \eqref{Spider Man 2} and \eqref{Network} that
		\small{
		\begin{align}
			|\widehat{u}(t, \xi)| &\leq e^{MT}  |\widehat{g}(\xi)| \exp \left\{ C_{2}\left(t_{k} - \ds \frac{3 \delta}{4}   \right) |\xi|^{2} + MT |\xi|  \right\} + \nonumber \\
			&+ e^{MT} \ds \int_{0}^{t_{k} - \delta} |\widehat{f}(s, \xi)|   \exp \left\{\left(C_{2}(t) - C_{2}(t_{k} - \delta) \right) |\xi|^{2} + MT|\xi|   \right\}  ds \nonumber + \\
			&+ e^{MT} \ds \int_{t_{k} - \delta}^{t} |\widehat{f}(s, \xi)|  \exp \left[\left(C_{2}(t) - C_{2}(s) \right) |\xi|^{2} + \ds \sum_{j = 1}^{N} \left(C_{1, j}(t) - C_{1, j}(s) \right) \xi_{j}   \right]  ds. \label{Spectre}
		\end{align}
		}
		\normalsize
		Note that, by \eqref{Lost in Translation} and \eqref{Casino Royale}, 
		\small{
		\begin{equation*}
			\begin{split}
				C_{2}\left(t_{k} - \ds \frac{3 \delta}{4}   \right) &= \ds \int_{0}^{t_{k} - 3\delta/4} c_{2}(r) dr \leq \ds \int_{t_{k} - \delta}^{t_{k} - 3\delta/4} c_{2}(r) dr \leq - \ds \frac{ \omega \delta }{4}, \\
				C_{2}(t) - C_{2}(t_{k} - \delta) &= \ds \int_{t_{k} - \delta}^{t} c_{2}(r) dr \leq  \ds \int_{t_{k} - \delta}^{t_{k} - 3\delta/4} c_{2}(r) dr \leq - \ds \frac{ \omega \delta }{4}.
			\end{split}
		\end{equation*}
		}
		\normalsize
		 Thus we can replace \eqref{Spectre} by 
		\small{
		\begin{align}
			|\widehat{u}(t, \xi)| &\leq e^{MT}  |\widehat{g}(\xi)| \exp \left\{ - \ds \frac{\omega \delta}{4} |\xi|^{2} + MT |\xi|  \right\} + e^{MT} \ds \int_{0}^{t_{k} - \delta} |\widehat{f}(s, \xi)|  \exp \left\{- \ds \frac{ \omega \delta }{4} |\xi|^{2} + MT|\xi|   \right\}  ds  \nonumber \\
			&+ e^{MT} \ds \int_{t_{k} - \delta}^{t} |\widehat{f}(s, \xi)|  \exp \left[\left(C_{2}(t) - C_{2}(s) \right) |\xi|^{2} + \ds \sum_{j = 1}^{N} \left(C_{1, j}(t) - C_{1, j}(s) \right) \xi_{j}   \right]  ds. \label{Dogville}
		\end{align}
		}
		\normalsize
		
		If we show that the exponential inside the third integral in of \eqref{Dogville} is uniformly bounded, by proceeding just like in previous situations we will be done. It follows from \eqref{Psycho}, \eqref{The French Connection} and \eqref{Casino Royale} that
		\footnotesize{
		\begin{equation} \label{Gone Girl}
				\underbrace{\left(C_{2}(t) - C_{2}(s) \right) |\xi|^{2} +  \sum_{j = 1}^{N} \left(C_{1, j}(t) - C_{1, j}(s) \right) \xi_{j}}_{(\dagger)} = \left(\ds \int_{s}^{t} \alpha^{k}(r) |r-t_{k}|^{p_{k}} dr \right) |\xi|^{2} + \ \sum_{j = 1}^{N} \left(\ds \int_{s}^{t} \beta^{j, k}(r) |r-t_{k}|^{q_{j,k}} dr \right) \xi_{j}. 
		\end{equation}
		}
		\normalsize
		Since one may assume $|r-t_{k}| \leq 1$, as a consequence of \eqref{Psycho}, \eqref{Skyfall} and \eqref{Fight Club} we obtain
		\small{
		\begin{align}
			(\dagger) &\leq   - \varepsilon \left(\ds \int_{s - t_{k}}^{t - t_{k}}  |r|^{p_{k}} dr \right) |\xi|^{2} + \kappa \left(\ds \int_{s - t_{k}}^{t - t_{k}}  |r|^{q_{k}} dr \right) |\xi|.  \label{Driving Miss Daisy}
		\end{align}
		}
		\normalsize
		By direct computations, one deduces
		\small{
		\begin{equation} \label{Unforgiven}
			\ds \int_{s - t_{k}}^{t - t_{k}}  |r|^{p_{k}} dr = \ds \frac{1}{p_{k} + 1}
			\begin{cases}
				\left( |t-t_{k}|^{p_{k} + 1} - |s-t_{k}|^{p_{k} + 1}\right),  &\quad\text{if $t \geq s \geq t_{k}$}; \\
				\left( |t-t_{k}|^{p_{k} + 1} + |s-t_{k}|^{p_{k} + 1}\right), & \quad\text{if $t \geq t_{k} \geq s$}; \\ 
				\left( |s-t_{k}|^{p_{k} + 1} - |t-t_{k}|^{p_{k} + 1}\right), & \quad\text{if $t_{k} \geq t \geq s$}.
			\end{cases}
		\end{equation} 
		}
		\normalsize
		An analogous expression is obtained when $p_{k}$ is replaced by $q_{k}$.  

		We split the estimate of $(\dagger)$ in \eqref{Gone Girl} into four parts:
		\begin{enumerate} [wide, labelindent=0pt]
			\item \pmb {$t \geq s \geq t_{k}$ and $q_{k} < p_{k} \leq  2q_{k} +1$}.  It follows from  \eqref{Driving Miss Daisy} and \eqref{Unforgiven} that 
		\begin{align*}
			(\dagger) &\leq  - \ds \frac{\varepsilon}{p_{k} + 1} \left(|t-t_{k}|^{p_{k} + 1} - |s-t_{k}|^{p_{k} + 1} \right) |\xi|^{2} + \ds \frac{\kappa}{q_{k} + 1} 	\left( |t-t_{k}|^{q_{k} + 1} - |s-t_{k}|^{q_{k} + 1}\right) |\xi|.
		\end{align*}	
		Now the proof is quite similar to one made with beginning at \eqref{Gran Torino}. Let
		\begin{equation} \label{Mystic River}
			\nu_{k} = \ds \frac{2q_{k} - p_{k} + 1}{q_{k} - p_{k}} < 0. 
		\end{equation} 
		We have an identity similar to \eqref{The Martian} and, by proceeding analogously, one concludes that
		\small{
		\begin{align*}
			\ds \frac{(\dagger)}{|\xi|^{\nu_{k}}} &\leq \left[ \ds \frac{\varepsilon}{p_{k} + 1} \left(|s-t_{k}| |\xi|^{\frac{1}{p_{k} - q_{k}}}\right)^{p_{k} + 1} - \ds \frac{\kappa}{q_{k} + 1} 	\left( |s-t_{k}| |\xi|^{\frac{1}{p_{k} - q_{k}}} \right)^{q_{k} + 1} \right] - \nonumber \\
			&-  \left[ \ds \frac{\varepsilon}{p_{k} + 1} \left(|t-t_{k}| |\xi|^{\frac{1}{p_{k} - q_{k}}}\right)^{p_{k} + 1} - \ds \frac{\kappa}{q_{k} + 1} 	\left( |t-t_{k}| |\xi|^{\frac{1}{p_{k} - q_{k}}} \right)^{q_{k} + 1} \right]. 
		\end{align*}
		}
		\normalsize
		Since $|t-t_{k}| \geq   |s - t_{k}|$, it suffices to repeat the arguments  with starting point at \eqref{Whiplash}. 
		\item \pmb{$t \geq t_{k} \geq s$ and $q_{k} < p_{k} \leq 2q_{k} + 1$}. By applying \eqref{Driving Miss Daisy} and \eqref{Unforgiven}, one gets
		\small{
		\begin{align*}
			(\dagger) &\leq  - \ds \frac{\varepsilon}{p_{k} + 1} \left(|t-t_{k}|^{p_{k} + 1} + |s-t_{k}|^{p_{k} + 1} \right) |\xi|^{2} + \ds \frac{\kappa}{q_{k} + 1} 	\left( |t-t_{k}|^{q_{k} + 1} + |s-t_{k}|^{q_{k} + 1}\right) |\xi|.
		\end{align*}
		}
		\normalsize
		Thus, by taking $\nu_{k}$ as in \eqref{Mystic River},
		\small{
		\begin{align*}
			\ds \frac{(\dagger)}{|\xi|^{\nu_{k}}} &\leq \left[- \ds \frac{\varepsilon}{p_{k} + 1} \left(|s-t_{k}| |\xi|^{\frac{1}{p_{k} - q_{k}}}\right)^{p_{k} + 1} + \ds \frac{\kappa}{q_{k} + 1} 	\left( |s-t_{k}| |\xi|^{\frac{1}{p_{k} - q_{k}}} \right)^{q_{k} + 1} \right] + \\
			&+  \left[- \ds \frac{\varepsilon}{p_{k} + 1} \left(|t-t_{k}| |\xi|^{\frac{1}{p_{k} - q_{k}}}\right)^{p_{k} + 1} + \ds \frac{\kappa}{q_{k} + 1} 	\left( |t-t_{k}| |\xi|^{\frac{1}{p_{k} - q_{k}}} \right)^{q_{k} + 1} \right].
		\end{align*}
		}
		\normalsize
		Using  notation set in \eqref{Mank}, we rewrite the inequality above as
		\small{
		\begin{equation*}
			\ds \frac{(\dagger)}{|\xi|^{\nu_{k}}} \leq f_{p_{k}, q_{k}, \varepsilon, \kappa}\left(|s-t_{k}| |\xi|^{\frac{1}{p_{k} - q_{k}}}\right) +  f_{p_{k}, q_{k}, \varepsilon, \kappa}\left(|t-t_{k}| |\xi|^{\frac{1}{p_{k} - q_{k}}}\right).
		\end{equation*}
		}
		\normalsize
		Since $f_{p_{k}, q_{k}, \varepsilon, \kappa}$ is decreasing  when $x > \left(\ds \frac{\kappa}{\varepsilon}\right)^{\frac{1}{p_{k} - q_{k}}}$,  it is still possible to obtain an inequality similar to \eqref{Cast Away} for the sum above and repeat the arguments made with starting point at \eqref{Whiplash}.
		\item \pmb{$t_{k} \geq t \geq s$ and $q_{k} < p_{k} \leq 2q_{k} + 1$.}  Using \eqref{Driving Miss Daisy} and \eqref{Unforgiven} we have
		\small{
		\begin{align*}
			(\dagger) &\leq  - \ds \frac{\varepsilon}{p_{k} + 1} \left(|s-t_{k}|^{p_{k} + 1} - |t-t_{k}|^{p_{k} + 1} \right) |\xi|^{2} + \ds \frac{\kappa}{q_{k} + 1} 	\left( |s-t_{k}|^{q_{k} + 1} - |t-t_{k}|^{q_{k} + 1}\right) |\xi|.
		\end{align*}
		}
		\normalsize
		Then
		\small{
		\begin{align*}
			\ds \frac{(\dagger)}{|\xi|^{\nu_{k}}} &\leq \left[ \ds \frac{\varepsilon}{p_{k} + 1} \left(|t-t_{k}| |\xi|^{\frac{1}{p_{k} - q_{k}}}\right)^{p_{k} + 1} - \ds \frac{\kappa}{q_{k} + 1} 	\left( |t-t_{k}| |\xi|^{\frac{1}{p_{k} - q_{k}}} \right)^{q_{k} + 1} \right] - \\
			&-  \left[ \ds \frac{\varepsilon}{p_{k} + 1} \left(|s-t_{k}| |\xi|^{\frac{1}{p_{k} - q_{k}}}\right)^{p_{k} + 1} - \ds \frac{\kappa}{q_{k} + 1} 	\left( |s-t_{k}| |\xi|^{\frac{1}{p_{k} - q_{k}}} \right)^{q_{k} + 1}\right],
		\end{align*}
		}
		\normalsize
		with $\nu_{k}$ given in \eqref{Mystic River}. By the fact that $t_{k} \geq t \geq s$, we deduce that $|s-t_{k}|  \geq  |t - t_{k}|$. Hence the proof in this situation is completely analogous to the first case.
		\item  \pmb{$q_{k} \geq p_{k}$}.  It is not necessary  to take into consideration here the sign of $(t_{k} - t)$ or $(t_{k} - s)$. Inded, since $q_{k} \geq p_{k}$, there exists $B_{p_{k}, q_{k}} > 0$ such that $|r| \leq T  \Rightarrow |r|^{q_{k}} \leq B_{p_{k}, q_{k}} |r|^{p_{k}}.$  One infers from \eqref{Driving Miss Daisy}
		\small{
		\begin{align*}
			(\dagger) &\leq  \left(\ds \int_{s - t_{k}}^{t - t_{k}}  |r|^{p_{k}} dr \right)  \left(- \varepsilon |\xi|^{2} + \kappa B_{p_{k}, q_{k}} |\xi| \right).
		\end{align*}
		}
		\normalsize
		Since both terms above are uniformly bounded, that closes the proof of \ref{Vertigo}.
	\end{enumerate} 	
\end{proof}

	\begin{proof}[\pmb{Proof of \ref{Cidade de Deus}}]
		
		Note the following consequence of the proof of \ref{Vertigo}:
		the behavior of $c_{2}$ and $c_{1, 1}, \ldots, c_{1, N}$ \emph{far} from the points $t_{1}, \ldots, t_{m}$ (to be more precise, if \eqref{Ghostbusters} holds) has no \emph{damage} to the well-posedness of \eqref{Schindler's List}. Furthermore, there exists no impact when $t$ is \emph{near} $t_{k}$ (being more accurate, if \eqref{Casino Royale} holds), \emph{provided that} $p_{k} \leq 2q_{k} + 1$.  Hence the only analysis left to be done is for neighborhoods of $t_{k}$, when $k$ is an element of the set $\mathscr{K}$ defined in \eqref{Dunkirk}. 
		
		We shall first prove the well-posedness of  \eqref{Schindler's List} in $G^{\tau}$ when $\mathscr{K} \neq \emptyset$ and \eqref{Dune} holds, splitting the proof in two parts, precisely described in \eqref{Birdman} and \eqref{The Founder}.  Similarly to \eqref{Gran Torino} and \eqref{Mystic River}, we set
		\begin{equation*}
			\nu_{k} = \ds \frac{2q_{k} - p_{k} + 1}{q_{k} - p_{k}} = \ds \frac{p_{k} - 2q_{k}   - 1}{p_{k} - q_{k}}.
		\end{equation*}
		However,  $\nu_{k}$ is \emph{strictly positive} if $k \in \mathscr{K}$.  Hence, by repeating the arguments applied from  \eqref{Phantom Thread} until \eqref{The Ilusionist}, as well as in each case of \eqref{The Founder}, we deduce that 
		\small{
		\begin{equation}  
			|\widehat{u}(t, \xi)| \leq e^{MT} e^{C_{p_{k}, q_{k}, \kappa, \varepsilon} |\xi|^{\nu_{k}}} \left(   |\widehat{g}(\xi)|  + \ds \int_{0}^{t} |\widehat{f}(s, \xi)|  ds \right),  \ \   |t - t_{k}| <\ds \frac{3 \delta}{4}, \ \forall \xi \in \Z^{N}.
		\end{equation}
		}
		\normalsize
		Let  $\nu := \ds \max_{k \in \mathscr{K}} \left\{\nu_{k} \right\}$; since we have a better estimate for the rest of the subsets of $[0,T]$, 
		\small{
		\begin{equation}\label{Amour}
			|\widehat{u}(t, \xi)| \leq C_{1} e^{ C_{2}|\xi|^{\nu}} \left(   |\widehat{g}(\xi)|  + \ds \int_{0}^{t} |\widehat{f}(s, \xi)|  ds \right),  \ \   \forall t \in [0, T], \ \ \forall \xi \in \Z^{N},
		\end{equation}
		}
		\normalsize 
		for some $C_{1}, C_{2} > 0$. Take $f \in C \left([0, T], G^{\tau}(\T^N) \right)$ and $g \in G^{\tau}(\T^{N})$; there exist $C_{3}, \sigma > 0$ such that 
		\small{
		\begin{equation} \label{Tropa de Elite} 
			|\widehat{f}(t, \xi)| \leq C_{3} e^{- \sigma |\xi|^{1/\tau}}, \ \ \ \ |\widehat{g}(\xi)| \leq C_{3} e^{- \sigma |\xi|^{1/\tau}}, \ \ \ \ \forall t \in [0, T], \ \forall \xi \in \Z^{N}.
		\end{equation}
		}
		\normalsize
		Associating \eqref{Amour} to \eqref{Tropa de Elite}, one infers, for some constant $C_{4} > 0$, that
		\small{
		\begin{equation*}  
			|\widehat{u}(t, \xi)| \leq C_{4} \exp \left(C_{2}|\xi|^{\nu}  - \sigma |\xi|^{1/\tau}  \right),  \ \ \ \ \forall t \in [0, T], \ \forall \xi \in \Z^{N},
		\end{equation*}
		}
		\normalsize
		
		 By \eqref{Dune} we have $\tau < \ds \min_{k \in \mathscr{K}} \left( \frac{p_{k} - q_{k}}{p_{k} - 2q_{k} - 1} \right)  \Rightarrow \ds \max_{k \in \mathscr{K}} \left( \frac{p_{k} - 2q_{k} - 1}{p_{k} - q_{k}} \right) < \ds \frac{1}{\tau}  \Rightarrow \nu < \ds \frac{1}{\tau}.$
		Thus there exists $C_{5} > 0$ for which
		\begin{equation*} 
			|\widehat{u}(t, \xi)| \leq C_{5} e^{- \frac{\sigma}{2} |\xi|^{1/\tau}},  \ \ \ \ \forall t \in [0, T], \ \forall \xi \in \Z^{N}.
		\end{equation*} 
		Our statement is then a consequence of Lemma \eqref{American Gangster}.   
		
		It remains to verify that if 
		\begin{equation} \label{Confessions of a Dangerous Mind}
			\varrho = \min_{k \in \mathscr{K}} \left( \frac{p_{k} - q_{k}}{p_{k} - 2q_{k} - 1} \right),
		\end{equation} 
		\eqref{Schindler's List} is ill-posed in $G^{\varrho}$. It follows from Proposition \ref{The Graduate} that the same problem is also ill-posed in $G^{s}$, for any $s > \varrho$, in $C^{\infty}$ and in $H^{r}$.  By Theorem \ref{Cries and Whispers}, it suffices to prove the result for the particular case where $\Re \ a_{2} \equiv \Re \ a_{1,1} \equiv \cdots  \equiv \Re \ a_{1,N}  \equiv a_{0} \equiv 0.$ In that situation, the Fourier coefficients of $u$ are  described in \eqref{Forrest Gump}.
		
		Denote by $k_{0}$ the element in   $\left\{1, 2, \ldots, m\right\}$ that satifies
		\begin{equation} \label{Notting Hill}
		\varrho =	\min_{k \in \mathscr{K}} \left(\frac{p_{k} - q_{k}}{p_{k} - 2q_{k} - 1} \right)	= \left( \frac{p_{k_0} - q_{k_0}}{p_{k_0} - 2q_{k_0} - 1} \right)
		\end{equation}
		and by $j_{0}$ the element in $\left\{1, 2, \ldots, N \right\}$ such that
		\begin{equation} \label{Good Will Hunting}
			q_{k_{0}} = \ds \min \left\{q_{1, k_{0}}, \ldots, q_{N, k_{0}} \right\} =  q_{j_{0}, k_{0}}.
		\end{equation}
		
		Consider in a first moment $\pmb{t_{k_0} \neq 0}$; in this case we may assume $t_{k_{0}} \geq \delta$. Let 
		\begin{equation*}
			\text{$g \equiv 0$ and $f(t, x) = -i \ds \sum_{\eta \in \Z} e^{-\vartheta |\eta|^{1/\varrho}}  e^{i x \cdot \eta^{j_{0}}}$},
		\end{equation*}
		where $ \eta^{j_{0}}$  denotes the vector $(0, \ldots, 0, \underbrace{\eta}_{\text{$j_{0}$-th term}}, 0, \ldots, 0)$ and $\vartheta$ is a positive number which will be chosen later. Then $\widehat{u}(t, \xi) = 0$  when $\xi$ is not of the form $\eta^{j_{0}}$ for some $\eta \in \Z$ and 
		\small{
		\begin{equation*} 
			\widehat{u}(t, \eta^{j_{0}}) =  e^{-\vartheta |\eta|^{1/\varrho}}  \ds \int_{0}^{t} \exp \left\{ \left[\left(C_{2}(t) - C_{2}(s) \right) \eta^{2} + \left(C_{1, j_{0}}(t) - C_{1, j_{0}}(s) \right) \eta  \right]  \right\} ds, \ \ \ \forall t \in [0, T], \ \ \forall \eta \in \Z.
		\end{equation*}
		}
		\normalsize
		Since the term inside the integral is positive, for any $t \in [t_{k_0}- \delta, t_{k_0}+\delta] \cap [0, T]$, we have
		\small{
		\begin{align}
			\widehat{u}(t, \eta^{j_{0}}) &\geq e^{-\vartheta |\eta|^{1/\varrho}}  \ds \int_{t_{k_{0}} - \delta}^{t} \exp \left\{ \left[\left(C_{2}(t) - C_{2}(s) \right) \eta^{2} + \left(C_{1, j_{0}}(t) - C_{1, j_{0}}(s) \right) \eta  \right]  \right\} ds \nonumber \\
			&= e^{-\vartheta |\eta|^{1/\varrho}}  \ds \int_{t_{k_{0}} - \delta}^{t} \exp \left[\left(\ds \int_{s}^{t} \alpha^{k_{0}}(r) |r-t_{k_0}|^{p_{k_0}} dr \right) \eta^{2} + \left(\ds \int_{s}^{t} \beta^{j_{0}, k_{0}}(r) |r-t_{k_0}|^{q_{j_{0}, k_0}} dr \right) \eta \right] ds, \label{Kramer vs Kramer} 
		\end{align}
		}
		\normalsize
		applying \eqref{Psycho} and \eqref{The French Connection}. Because $\beta^{j_{0}, k_{0}}$ does not vanish, the function is either strictly positive or negative in the interval $[t_{k_0} - \delta, t_{k_{0}}]$. In order to simplify the proof (the other case is analogous), we assume it is positive.  On the other hand, $\alpha^{k_{0}}$ is bounded; thus there exists $\gamma, \Gamma > 0$  for which ,
		\begin{equation*} 
		\beta^{j_{0},k_{0}} (r) \geq \gamma,  \ \ \ \	\alpha^{k_{0}} (r) \geq - \Gamma,  \ \ \ \forall r \in [t_{k_0} - \delta, t_{k_{0}}]. 
		\end{equation*} 
		By associating the inequalities above to \eqref{Good Will Hunting} and \eqref{Kramer vs Kramer}, we obtain for any $t \in [t_{k_0}- \delta, t_{k_0}]$ and  $\eta \in \N$,
		\small{
		\begin{equation} \label{Manchester by the Sea}
			\widehat{u}(t, \eta^{j_{0}})  \geq e^{-\vartheta |\eta|^{1/\varrho}}  \ds \int_{t_{k_{0}} - \delta}^{t} \exp \left[- \Gamma \left(\ds \int_{s}^{t}  |r-t_{k_0}|^{p_{k_0}} dr \right) \eta^{2} + \gamma \left(\ds \int_{s}^{t}  |r-t_{k_0}|^{q_{k_0}} dr \right) \eta \right] ds,
		\end{equation}
		}
	\normalsize
		
		We shall now analyze the term inside the exponential, which will be denoted by
		\small{
		\begin{equation*}
			(\triangle) := - \Gamma \left(\ds \int_{s}^{t}  |r-t_{k_0}|^{p_{k_0}} dr \right) \eta^{2} + \gamma \left(\ds \int_{s}^{t}  |r-t_{k_0}|^{q_{k_0}} dr \right) \eta. 
		\end{equation*}
		}
		\normalsize
		It follows from \eqref{Unforgiven} that
		\begin{equation*} 
			(\triangle) =   - \Gamma \left(\ds \frac{\left( |s-t_{k_0}|^{p_{k_{0}} + 1} - |t-t_{k_0}|^{p_{k_{0}} + 1}\right)}{p_{k_{0}} + 1} \right) \eta^{2} + \gamma \left(\ds \frac{\left( |s-t_{k_0}|^{q_{k_{0}} + 1} - |t-t_{l_0}|^{q_{k_{0}} + 1}\right)}{q_{k_{0}} + 1} \right) \eta. 
		\end{equation*}
		By proceeding analogously to what was done from \eqref{Gran Torino} up to \eqref{Cast Away}, we obtain for $\eta \geq 0$ 
		\small{
		\begin{align}
			(\triangle) &= \left[- \ds \frac{\Gamma}{p_{k_{0}} + 1} \left(|s-t_{k_0}| \eta^{\frac{1}{p_{k_{0}} - q_{k_{0}}}} \right)^{p_{k_{0}} + 1}  + \ds \frac{\gamma}{q_{k_{0}} + 1} \left(|s-t_{k_0}| \eta^{\frac{1}{p_{k_{0}} - q_{ k_{0}}}} \right)^{q_{k_{0}} + 1}  \right] \eta^{\nu_{k_{0}}} \nonumber \\
			& - \left[- \ds \frac{\Gamma}{p_{k_{0}} + 1} \left(|t-t_{k_0}| n^{\frac{1}{p_{k_{0}} - q_{k_{0}}}} \right)^{p_{k_{0}} + 1}  + \ds \frac{\gamma}{q_{ k_{0}} + 1} \left(|t-t_{k_0}| \eta^{\frac{1}{p_{k_{0}} - q_{k_{0}}}} \right)^{q_{k_{0}} + 1}  \right] \eta^{\nu_{k_{0}}} \nonumber \\
			&= \eta^{\nu_{k_0}} \left[f_{p_{k_{0}}, q_{ k_{0}}, \Gamma, \gamma} \left(|s-t_{k_0}| \eta^{\frac{1}{p_{k_{0}} - q_{k_{0}}}} \right) - f_{p_{k_{0}}, q_{k_{0}}, \Gamma, \gamma} \left(|t-t_{k_0}| \eta^{\frac{1}{p_{k_{0}} - q_{k_{0}}}} \right) \right], \label{Batman Begins} 
		\end{align}
		}	
		\normalsize
		where
		\small{
		\begin{equation} \label{A Beautiful Mind} 
			\nu_{k_{0}} = \ds \frac{p_{k_{0}} - 2q_{k_{0}} - 1}{ p_{k_{0}} - q_{ k_{0}} } \ \ \ \text{and} \ \ \ f_{p_{k_{0}}, q_{k_{0}}, \Gamma, \gamma}(x) = - \ds \frac{\Gamma x^{p_{k_{0}}+ 1}}{p_{k_{0}} + 1}  + \ds \frac{\gamma x^{q_{k_{0}}+ 1}}{q_{k_{0}} + 1}.
		\end{equation}
		}
		\normalsize
		
		Combining \eqref{Manchester by the Sea} with \eqref{Batman Begins}, one concludes that for any  $t \in [t_{k_0}- \delta, t_{k_0}]$ and $\eta \in \N$,
		\small{
			\begin{equation} \label{Argo}  
				\widehat{u}(t, \eta^{j_{0}})  \geq e^{-\vartheta |\eta|^{1/\varrho}}  \ds \int_{t_{k_{0}} - \delta}^{t} \exp \left\{\eta^{\nu_{k_0}} \left[f_{p_{k_{0}}, q_{k_{0}}, \Gamma, \gamma} \left(|s-t_{k_0}| \eta^{\frac{1}{p_{k_{0}} - q_{ k_{0}}}} \right) - f_{p_{k_{0}}, q_{k_{0}}, \Gamma, \gamma} \left(|t-t_{k_0}| \eta^{\frac{1}{p_{k_{0}} - q_{ k_{0}}}} \right) \right] \right\} ds.
			\end{equation}
		}
		\normalsize
		 Since $p_{k_{0}} > q_{k_{0}}$ and $f_{p_{k_{0}}, q_{k_{0}}, \Gamma, \gamma}'(x) = - \Gamma x^{p_{k_{0}}}  + \gamma x^{q_{k_{0}}},$ we have 
		 \small{
		 \begin{equation*}
		 \text{$f_{p_{k_{0}}, q_{k_{0}}, \Gamma, \gamma}$ strictly positive and increasing in the interval $ \left(0, \left(\ds \frac{\gamma}{\Gamma} \right)^{\frac{1}{p_{k_0} - q_{k_{0}}}}  \right)$.}
		 \end{equation*} 
	 	 }
 	 	\normalsize
	 Take  $\ell \in \N$ such that $\ds \frac{1}{\ell}$ belongs to this interval and consider the following sequences:
		\begin{equation}  \label{Up in the Air}
			t_{k_{0}, n} = t_{k_{0}} - \ds \frac{1}{3\ell n^{\frac{1}{p_{k_{0}} - q_{k_{0}}}}}, \ \ \ \tau_{k_{0}, n} = t_{k_{0}} - \ds \frac{1}{\ell n^{\frac{1}{p_{k_{0}} - q_{k_{0}}}}}, \ \ \ \iota_{k_{0}, n}= t_{k_{0}} - \ds \frac{1}{2\ell n^{\frac{1}{p_{k_{0}} - q_{k_{0}}}}}, \ \ \forall n \in \N.
		\end{equation}

		Taking $t = t_{k_{0}, n}$ as in \eqref{Up in the Air} and putting it in \eqref{Argo}, one has
		\small{
			\begin{equation*} 
				\widehat{u}(t_{ k_{0}, n}, n^{j_{0}}) \geq e^{-\vartheta n^{1/\varrho}}  \ds \int_{t_{k_{0}} - \delta}^{t_{ k_{0}, n}} \exp \left\{ n^{\nu_{ k_{0}}} \left[f_{p_{k_{0}}, q_{k_{0}}, \Gamma, \gamma} \left(|s-t_{k_0}| n^{\frac{1}{p_{k_{0}} - q_{ k_{0}}}} \right) - f_{p_{k_{0}}, q_{ k_{0}}, \Gamma, \gamma} \left(\frac{1}{3\ell} \right) \right]  \right\} ds. 
			\end{equation*}
		} 
		\normalsize
		Since we may take $\ell$ large enough so the sequences  introduced in \eqref{Up in the Air} are contained in $[t_{k_0} - \delta, t_{k_0}]$, we deduce from last inequality that
		\small{
			\begin{equation} \label{Ocean's Eleven}
				\widehat{u}(t_{k_{0}, n}, n^{j_{0}}) \geq e^{-\vartheta n^{1/\varrho}}  \ds \int_{\tau_{k_{0}, n}}^{\iota_{k_{0}, n}} \exp \left\{ n^{\nu_{ k_{0}}} \left[f_{p_{k_{0}}, q_{k_{0}}, \Gamma, \gamma} \left(|s-t_{k_0}| n^{\frac{1}{p_{k_{0}} - q_{ k_{0}}}} \right) - f_{p_{k_{0}}, q_{k_{0}}, \Gamma, \gamma} \left(\frac{1}{3\ell} \right) \right]  \right\} ds. 	
			\end{equation}
		}
		\normalsize
		Observe that $\tau_{k_{0}, n} \leq s \leq  \iota_{k_{0}, n}$ implies 
		\begin{equation*}
			\ds \frac{1}{2\ell} \leq |s - t_{k_0}|n^{\frac{1}{p_{k_{0}} - q_{k_{0}}}} \leq \ds \frac{1}{\ell}.
		\end{equation*}
		Note that our choice of $\ell$ was made so in the interval above $f_{p_{k_{0}}, q_{k_{0}}, \Gamma, \gamma}$ is increasing, which allows us to conclude that
		\small{
		\begin{equation} \label{A Separation}
			\widehat{u}(t_{k_{0}, n}, n^{j_{0}}) \geq e^{-\vartheta n^{1/\varrho}}  \ds \int_{\tau_{k_{0}, n}}^{\iota_{ k_{0}, n}} \exp \left\{ n^{\nu_{k_{0}}} \left[f_{p_{k_{0}}, q_{k_{0}}, \Gamma, \gamma} \left(\ds \frac{1}{2 \ell} \right) - f_{p_{k_{0}}, q_{k_{0}}, \Gamma, \gamma} \left(\frac{1}{3\ell} \right) \right]  \right\} ds.
		\end{equation}
		}
		\normalsize
		Let $\varsigma = f_{p_{k_{0}}, q_{k_{0}}, \Gamma, \gamma} \left(\ds \frac{1}{2\ell} \right) - f_{p_{k_{0}}, q_{k_{0}}, \Gamma, \gamma} \left(\ds \frac{1}{3 \ell} \right) > 0$. By \eqref{Up in the Air} and \eqref{A Separation}, 
		\small{
		\begin{align}
			\widehat{u}(t_{ k_{0}, n}, n^{j_{0}}) &\geq  \exp \left(-\vartheta n^{1/\varrho} + \varsigma n^{\nu_{ k_{0}}} \right)  \left(\iota_{ k_{0}, n} - \tau_{ k_{0}, n} \right) =  \exp \left(-\vartheta n^{1/\varrho} + \varsigma n^{\nu_{k_{0}}} \right) \ds \frac{1}{2\ell n^{\frac{1}{p_{k_{0}} - q_{k_{0}}}}}, \ \ \forall n \in \N.  \label{Matrix} 
		\end{align}
		}
		\normalsize
		On the other hand, it follows from   \eqref{Confessions of a Dangerous Mind}, \eqref{Notting Hill}, \eqref{Good Will Hunting} and \eqref{A Beautiful Mind} that  
		\begin{equation} \label{My Left Foot}
			\nu_{ k_{0}} = \ds \frac{p_{k_{0}} - 2q_{k_{0}} - 1}{ p_{k_{0}} - q_{ k_{0}} } = \ds \frac{1}{\varrho}.	
		\end{equation}
		Therefore, if $\vartheta = \ds \frac{\varsigma}{2}$, we conclude from \eqref{Matrix} that 
		\begin{equation*}
			\widehat{u}(t_{k_{0}, n}, n^{j_{0}}) \geq \ds \frac{\exp \left( \frac{\varsigma}{2} n^{1/\varrho} \right)} {2\ell n^{\frac{1}{p_{k_{0}} - q_{k_{0}}}}} \ \Rightarrow \ \ds \lim_{n \to +\infty } \widehat{u}(t_{ k_{0}, n}, n^{j_{0}}) = + \infty, 
		\end{equation*}
		which proves the ill-posedness of \eqref{Schindler's List} in $G^{\varrho}$. 
		
		Finally, we analyze the case where $\pmb{t_{k_0} = t_{1} = 0}$; the main difference here lies on the choice of the right-hand side functions. Take
		\begin{equation*}
			\text{$f \equiv 0$ and $g(x) =  \ds \sum_{\eta \in \Z} e^{- \vartheta |\eta|^{1/\varrho}} e^{i x \cdot \eta^{j_{0}}}$}
		\end{equation*}
		for some $\vartheta > 0$ that will be defined later and $j_{0}$ the number for which $q_{1} = \ds \min \left\{q_{1, 1}, \ldots, q_{N, 1} \right\} =  q_{j_{0}, 1}$. By proceeding just like in last case, we deduce that
		\begin{equation*} 
			\widehat{u}(t, \eta^{j_{0}}) = e^{- \vartheta |\eta|^{1/\varrho}}  \exp  \left[C_{2}(t) \eta^{2} + C_{1, j_{0}}(t) \eta  \right], \ \ \ \forall t \in [0, T], \ \ \forall \eta \in \Z.
		\end{equation*}
		When $0 \leq t \leq \delta$, one has
		\small{
		\begin{align*}
			\widehat{u}(t, \eta^{j_{0}}) &=  e^{- \vartheta |\eta|^{1/\varrho}} \exp \left[ \left(\ds \int_{0}^{t} \alpha^{1}(r) r^{p_{1}} dr \right) \eta^{2} + \left(\ds \int_{0}^{t} \beta^{j_{0}, 1}(r) r^{q_{j_{0}, 1}} dr \right) \eta  \right].
		\end{align*}
		}
		\normalsize
		With the same hypotheses and inequalities applied in \eqref{Manchester by the Sea}, we obtain
		\small{
		\begin{align*}
			\widehat{u}(t, \eta^{j_{0}}) &\geq  e^{- \vartheta |\eta|^{1/\varrho}} \exp \left[ - \Gamma \left(\ds \int_{0}^{t}  r^{p_{1}} dr \right) \eta^{2} + \gamma \left(\ds \int_{0}^{t} r^{q_{j_{0}, 1}} dr \right) \eta  \right]   \\
			&\geq e^{- \vartheta |\eta|^{1/\varrho}} \exp \left[ - \ds \frac{\Gamma}{p_{1} + 1} t^{p_{1} +1}   \eta^{2} + \ds \frac{\gamma}{q_{j_{0}, 1} + 1}  t^{q_{j_{0}, 1} + 1}  \eta  \right].
		\end{align*}  	 
		}
		\normalsize
		With a similar argument to the one used in \eqref{Batman Begins}, one has for $\eta \in \N$
		\small{
		\begin{align*}
			\widehat{u}(t, \eta^{j_{0}}) &\geq   e^{- \vartheta |\eta|^{1/\varrho}} \exp \left\{ \left[ - \ds \frac{\Gamma}{p_{1} + 1} \left(t   \eta^{\frac{1}{p_{1} - q_{1}}} \right)^{p_{1} + 1} + \ds \frac{\gamma}{q_{1} + 1}  \left(t  n^{\frac{1}{p_{1} - q_{1}}} \right)^{q_{ 1} + 1}  \right] \eta^{\nu_{1}} \right\} \\
			&=  e^{- \vartheta |\eta|^{1/\varrho}} \exp \left[ \eta^{\nu_{1}}  f_{p_{1}, q_{1}, \Gamma, \gamma} \left(t \eta^{\frac{1}{p_{1} - q_{1}}} \right) \right].
		\end{align*}		
		}
		\normalsize
		Analogously,  $f_{p_{1}, q_{1}, \Gamma, \gamma}$  is strictly positive and increasing in the interval $ \left(0, \left(\ds \frac{\gamma}{\Gamma} \right)^{\frac{1}{p_{1} - q_{1}}}  \right)$. Take  $\ell \in \N$ such that $\ds \frac{1}{\ell}$ belongs to this interval and consider the sequence  $t_{1, n} = \ds \frac{1}{\ell n^{\frac{1}{p_{1} - q_{1}}}}$. Then
		\small{
		\begin{align*}
			\widehat{u}(t_{1, n}, n^{j_{0}}) &\geq  e^{- \vartheta n^{1/\varrho}} \exp \left[ n^{\nu_{1}} f_{p_{1}, q_{ 1}, \Gamma, \gamma} \left(\ds \frac{1}{\ell} \right) \right] \geq \exp \left[ \left(f_{p_{1}, q_{ }, \Gamma, \gamma} \left(\ds \frac{1}{\ell} \right) - \vartheta  \right) n^{\nu_{1}} \right],
		\end{align*}	
		}
		\normalsize
		since \eqref{My Left Foot} holds once again. 
		Therefore it suffices to take  $\vartheta =  f_{p_{1}, q_{ 1}, \Gamma, \gamma} \left(\ds \frac{1}{\ell} \right) /2$, which entails $ \ds \lim_{n} \widehat{u}(t_{1, n}, n^{j_{0}}) = + \infty$, finalizing the  proof of Theorem \ref{Tropa de Elite II}. 	
	\end{proof}

\begin{Cor} \label{The Bourne Ultimatum}
With the same hypotheses of Theorem \ref{Tropa de Elite II}, suppose that $p_{k} \leq 2q_{k} +1$ for each $k$ in $\left\{1, 2, \ldots, m \right\}$. In this  situation, we have the following properties:
\begin{itemize}[leftmargin=*]
	\item Given $r \in \R$,  $f \in C \left([0,T]; H^{r}(\T^N) \right)$ and $g \in H^{r}(\T^N)$, there exists a unique $u \in C \left([0,T]; H^{r}(\T^N) \right) \cap C^{1} \left([0,T]; H^{r-2}(\T^N) \right) $ which solves \eqref{Schindler's List}.
	\item For every $f \in C \left([0,T]; C^{\infty}(\T^N) \right)$ and $g \in C^{\infty}(\T^N)$, there exists a unique $u \in C^{1} \left([0,T]; C^{\infty}(\T^N) \right)$ that  solves \eqref{Schindler's List}.
	\item Given $s \geq 1$,  $f \in C \left([0,T]; G^{s}(\T^N) \right)$ and $g \in G^{s}(\T^N)$, there exists a unique $u \in C^{1} \left([0,T]; G^{s}(\T^N) \right) $ which solves \eqref{Schindler's List}.
\end{itemize}

\end{Cor}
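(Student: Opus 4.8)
The plan is to deduce this corollary directly from results already in hand, with no new analysis required. The decisive input is part \ref{Vertigo} of Theorem \ref{Tropa de Elite II}: under the standing hypothesis, once one assumes in addition $p_{k} \leq 2q_{k}+1$ for every $k \in \{1,\dots,m\}$, the Cauchy problem \eqref{Schindler's List} is well-posed in $H^{r}$, i.e. with no loss of derivatives. So first I would simply invoke this: for each $r \in \R$, $f \in C([0,T];H^{r}(\T^{N}))$ and $g \in H^{r}(\T^{N})$, there is a unique $u \in C([0,T];H^{r}(\T^{N}))$ solving \eqref{Schindler's List}. Uniqueness here, and in every framework below, is automatic from Remark \ref{Requiem for a Dream}, since the Fourier coefficients of any admissible solution are forced to equal \eqref{Sunset Boulevard}.

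Next I would propagate this well-posedness up the regularity scale by means of Proposition \ref{The Graduate}: item \ref{Back to the Future} upgrades $H^{r}$ well-posedness (even with a loss) to $C^{\infty}$ well-posedness, and item \ref{Back to the Future II} upgrades $C^{\infty}$ well-posedness to $G^{s}$ well-posedness for every $s \geq 1$. This produces, for data in $C([0,T];C^{\infty}(\T^{N})) \times C^{\infty}(\T^{N})$ (resp. $C([0,T];G^{s}(\T^{N})) \times G^{s}(\T^{N})$), a unique solution $u \in C([0,T];C^{\infty}(\T^{N}))$ (resp. $u \in C([0,T];G^{s}(\T^{N}))$).

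Finally, to extract the $C^{1}$-in-$t$ regularity claimed in the three bullets, I would appeal to Remark \ref{The Descendants}: rewriting the equation as $D_{t}u = Q(t,D_{x})u + f$ and using that $Q(t,D_{x})$ maps $C([0,T];H^{\sigma}(\T^{N}))$ continuously into $C([0,T];H^{\sigma-2}(\T^{N}))$, and maps both $C([0,T];C^{\infty}(\T^{N}))$ and $C([0,T];G^{s}(\T^{N}))$ into themselves, one reads off that $D_{t}u$ belongs to $C([0,T];H^{r-2}(\T^{N}))$, $C([0,T];C^{\infty}(\T^{N}))$ and $C([0,T];G^{s}(\T^{N}))$ in the respective cases. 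Hence $u \in C([0,T];H^{r}(\T^{N})) \cap C^{1}([0,T];H^{r-2}(\T^{N}))$ in the Sobolev setting, and $u \in C^{1}$ of the appropriate space in the smooth and Gevrey settings, which together with the uniqueness already secured gives the statement.

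I do not expect any real obstacle: the entire analytic content — the energy-type estimates near the degeneracy points $t_{1},\dots,t_{m}$ yielding a bound of the form \eqref{Zero Dark Thirty} with $\rho = 0$ — has already been carried out in the proof of Theorem \ref{Tropa de Elite II} \ref{Vertigo}, and the rest is bookkeeping. The only point deserving a word of care is the compatibility of the uniqueness assertions across the three functional settings, but this is immediate, since a $C^{\infty}$ (or $G^{s}$) solution is in particular an $H^{r}$ solution for every $r$, so uniqueness in $H^{r}$ already forces uniqueness in the finer spaces.
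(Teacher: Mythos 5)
Your proposal is correct and matches the paper's (implicit) argument exactly: just as Corollary \ref{Lincoln} is stated to follow directly from Theorem \ref{Rear Window}, Proposition \ref{The Graduate} and Remark \ref{The Descendants}, this corollary follows by replacing the first ingredient with Theorem \ref{Tropa de Elite II}\ref{Vertigo}, and then propagating well-posedness up the scale via Proposition \ref{The Graduate} and extracting $C^{1}$-in-$t$ regularity from Remark \ref{The Descendants}. Your remark that uniqueness is automatic from the forced formula \eqref{Sunset Boulevard} (Remark \ref{Requiem for a Dream}) and that $C^{\infty}$/$G^{s}$ solutions are in particular $H^{r}$ solutions is exactly the bookkeeping the paper leaves to the reader.
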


\section{Final Remarks} \label{Ex Machina} 

In this section we present examples and facts that can be easily deduced from results proved previously and may be applied for other problems that were not precisely stated before.

\begin{Obs}
It is worth noting that Proposition \ref{Seven} and Theorem \ref{Cries and Whispers} can both be extended if we replace $Q(t, D_{x})$ in \eqref{Her} by any linear differential operator of any order $m$,  with the only difference being the loss of regularity, where  $2$ is replaced $m$, which once again will not matter in the $C^{\infty}$ and $G^{s}$ contexts. Therefore,  whenever one deals with operators in the form $D_{t} - Q(t, D_{x})$, only the imaginary part of $Q$ truly matters from a solvability standpoint.  Furthermore, the zero-order term can be neglected as well.
\end{Obs}

\begin{Obs}
In similar fashion, Lemma \ref{American Gangster} and Proposition \ref{The Graduate} can be obtained  if one replaces $Q(t, D_{x})$ in \eqref{Her} by any linear differential operator of any order $m$. In this case, the only difference is the first statement of the lemma, where $u$ becomes  an element of $C\left([0, T]; H^{r - m - \rho}(\T^N) \right)$. 
\end{Obs}

\begin{Obs}
When $N = 1$, any second-order operator $Q(t, D_{x})$ has the form described in \eqref{Her}. Furthermore, consider the case given by
\begin{equation*}  
	P(t, D_{x}, D_{t}) =  D_{t} + a_{3}(t) D_{x}^{3}+ a_{2}(t) D_{x}^{2} + a_{1}(t) D_{x} + a_{0}(t), \ \ \ \ t \in [0, T], \ x \in \T^N. 
\end{equation*}	
Proceeding just like in Section \ref{The Post}, we obtain a solution quite similar to the one described in \eqref{Sunset Boulevard}. By the argument used in last remark, one may assume that $Q$ only  has pure imaginary coefficients and no zero-order term. After all this machinery, following \eqref{Forrest Gump}, we deduce that
\small{
\begin{equation*}  
\begin{split}
\widehat{u}(t, \xi) &= \widehat{g}(\xi) \exp  \left[C_{3}(t) \xi^{3} + C_{2}(t) \xi^{2} + C_{1}(t) \xi  \right]  + \\
&+ \ds i \int_{0}^{t} \widehat{f}(s, \xi)  \exp  \left[\left(C_{3}(t) - C_{3}(s) \right) \xi^{3} + \left(C_{2}(t) - C_{2}(s) \right) \xi^{2} +   \left(C_{1}(t) - C_{1}(s) \right) \xi  \right]   ds,  \ \ \forall t \in [0, T], \ \ \forall \xi \in \Z.
\end{split}
\end{equation*}
}
\normalsize
Now note the following: the sign of $\xi^{3}$, in opposition to $\xi^{2}$, \emph{changes} when one alters the sign of $\xi$. Hence, if $c_{3}$ does not vanish at some point, we are able to proceed analogously to what was done in Section \ref{Memento} and prove ill-posedness in any setting (it only boils down to \emph{picking the correct direction}). Thus the only case where it is possible to conclude well-posedness is when $c_{3} \equiv 0$, which reverts back to the situation we dealt with throughout the paper.
\end{Obs}

\begin{Obs}
The intriguing consequence of Theorem \ref{Tropa de Elite II} that, with such hypotheses, one always has $C^{\omega}$ well-posedness, is due to the fact the operator $Q$ in \eqref{Her} has order $2$. In fact, consider the following fourth-order operator: 
\begin{equation*}
P(t, D_{x}, D_{t}) = D_{t} + i \left[ - 5t^{4} D_{x}^{4} + 3  t^{2} D_{x}^{3} +   D_{x}^{2} +  2  t D_{x}   \right], \ \ \ \ t \in [0, T], \ x \in \T.
\end{equation*}	
In this case, \eqref{Sunset Boulevard} becomes, for each $t \in [0, T]$ and $\xi \in \Z$,
\footnotesize{
\begin{equation*} 
	\begin{split}
		\widehat{u}(t, \xi) &= \widehat{g}(\xi) \exp \left\{- t^{5}\xi^{4} +  t^{3} \xi^{3} +  t \xi^{2} +  t^{2} \xi    \right\}  +  i \int_{0}^{t} \widehat{f}(s, \xi)  \exp  \left[- \left(t^{5} - s^{5} \right) \xi^{4} +  \left(t^{3} - s^{3} \right) \xi^{3} +  \left(t - s \right) \xi^{2} + \left(t^{2} - s^{2} \right) \xi  \right] 	   ds.  
	\end{split}	
\end{equation*}
}
\normalsize

Now take $f \equiv 0$ and $g = \ds \sum_{\eta \in \Z} e^{- |\eta|} e^{i x \eta}$; thus $f \in C(\left[0, T\right]; C^{\omega}(\T))$ and $g \in C^{\omega}(\T)$, and the expression above is turned into $\widehat{u}(t, \xi) = \exp \left\{- t^{5}\xi^{4} +  t^{3} \xi^{3} +  t \xi^{2} +  t^{2} \xi - |\xi|    \right\}$, for any $t \in [0, T]$ and $\xi \in \Z$. Consider the sequence $t_{n} = \ds \frac{1}{n^{4/5}}$. Then, for each $n \in \N$, we obtain
\begin{equation*}
\widehat{u}(t_{n}, n) = \exp \left\{- 1 +   n^{3/5} +  n^{6/5} +  n^{-3/5} - n   \right\} \ \ \Rightarrow \ \ \lim_{n  \to \infty} \widehat{u}(t_{n}, n) = +\infty. 
\end{equation*}
Thus in this case the Cauchy problem \eqref{Schindler's List} is not well-posed in any of the aforementioned spaces.  

\end{Obs}

\begin{Obs} \label{The Super Mario Bros. Movie}
It is also worth mentioning one cannot obtain an analogous result of Theorem \ref{Tropa de Elite II} when we consider functions that vanish to an infinite order at some point. Just to make things simples, we assume $N = 1$ and take operators given as in \eqref{Raiders of the Lost Ark} . 

Consider first $c_{2}(t) = - \ds \frac{2 e^{-1/t^{2}}}{t^{3}}$ and $c_{1}(t) = \ds \frac{2 e^{-1/t^{2}}}{t^{3}}$; in this case, applying formula \eqref{Forrest Gump}, we have $C_{2}(t) = - e^{-1/t^{2}}$, $C_{1}(t) = e^{-1/t^{2}}$ and
\small{
	\begin{equation*}
		\begin{split}
			\widehat{u}(t, \xi) &= \widehat{g}(\xi) \exp  \left[ e^{-1/t^{2}}  (\xi - |\xi|^{2})  \right]  + \ds i \int_{0}^{t} \widehat{f}(s, \xi)  \exp  \left[\left(e^{-1/s^{2}} - e^{-1/t^{2}}    \right) (\xi - |\xi|^{2}) \right]   ds,  \ \ \forall t \in [0, T], \ \ \forall \xi \in \Z,
		\end{split}
	\end{equation*}
}
\normalsize
which implies that 
\begin{equation*}
\left|\widehat{u}(t, \xi) \right| \leq |\widehat{g}(\xi)| + \ds  \int_{0}^{t} \left|\widehat{f}(s, \xi) \right| ds,  \ \ \forall t \in [0, T], \ \ \forall \xi \in \Z.
\end{equation*} 
Hence this is an example of operator that is well-posed in $H^{r}$ regarding the Cauchy Problem \eqref{Schindler's List}.

 On the other hand,  let $c_{2}(t) = -  \ds \frac{2 e^{-1/t^{2}}}{t^{3}}$ and $c_{1}(t) =  \ds \frac{2 e^{-1/5t^{2}}}{5 t^{3}}$, which means that $C_{2}(t) = - e^{-1/t^{2}}$ and $C_{1}(t) = e^{-1/5t^{2}}$. Consider $f \equiv 0$ and $g(x) = \ds \sum_{\eta \in \Z} e^{- |\eta|^{1/2}} e^{i x  \eta} $; similarly to the first case we have
\begin{align*} 
	\widehat{u}(t, \eta)  =  e^{-  |\eta|^{1/2}}  \exp  \left[-e^{-1/t^{2}} \eta^{2} + e^{-1/5t^{2}} \eta  \right],  \ \ \forall t \in [0, T], \ \ \forall \eta \in \Z. 
\end{align*}
Consider, for $n \in \N$ sufficiently large, $t_{n} = \ds \frac{1}{\sqrt{2 \log n}}$. Then $t_{n}^{2} = \ds \frac{1}{2 \log n} = \ds \frac{1}{ \log n^{2}} \ \Rightarrow \ - \ds\frac{1}{t_{n}^{2}} =  - \log n^{2}$, which implies that
\begin{equation*}
	\exp \left(- \ds\frac{1}{t_{n}^{2}} \right) = \ds \frac{1}{n^{2}} \ \ \text{and} \ \ \exp \left(- \ds\frac{1}{5 t_{n}^{2}} \right) =   \exp \left(- \ds \frac{ \log n^{2}}{5} \right) = \ds \frac{1}{n^{2/5}}.
\end{equation*}
Therefore, for $n \in \N$ sufficiently large, 
\begin{equation*}
	\widehat{u}(t_{n}, n) = e^{- n^{1/2}}  \exp  \left[- \ds \frac{1}{n^{2}} n^{2} + \ds \frac{1}{n^{2/5}} n  \right] = e^{n^{3/5} - n^{1/2} -1} \ \ \Rightarrow \ \  \widehat{u}(t_{n}, n) \to \infty \ \text{when $n \to \infty$}.
\end{equation*}
Hence  the Cauchy Problem \eqref{Schindler's List} is not well-posed in $G^{2}$, which shows our assertion. 
\end{Obs}

\end{document}